\DeclareMathOperator{\QQ}{\mathbb{Q}}
\DeclareMathOperator{\ZZ}{\mathbb{Z}}
\DeclareMathOperator{\NN}{\mathbb{Z}}
\DeclareMathOperator{\inv}{inv}
\DeclareMathOperator{\Gal}{Gal}
\newcommand{\legendre}[2]{\genfrac{(}{)}{}{}{#1}{#2}}
\newcommand\numberthis{\addtocounter{equation}{1}\tag{\theequation}}
\newtheorem{lemma}{Lemma}
\newtheorem{conjecture}[lemma]{Conjecture}
\newtheorem{theorem}[lemma]{Theorem}
\newtheorem{heuristic}[lemma]{Heuristic}
\theoremstyle{definition}
\newtheorem{definition}[lemma]{Definition}
\newtheorem{remark}[lemma]{Remark}
\numberwithin{lemma}{section}
\numberwithin{equation}{section}
\begin{document}
\title{Counting $D_4$-field extensions by multi-invariants}

\author{Willem Hansen}
\author{Anna Zanoli}

\address{Graz University of Technology, Institute of Analysis and Number Theory, Steyrergasse 30, 8010 Graz, Austria}
\email{hansen@math.tugraz.at} 

\address{Institute of Science and Technology Austria, Am Campus 1, 3400 Klosterneuburg, Austria}
\email{anna.zanoli@ist.ac.at}

\date{}  

\begin{abstract}
    We count the number of Galois extensions $M/\mathbb{Q}$ with fixed Galois group $\text{Gal}(M/\mathbb{Q})=D_4$ ordered by multi-invariants introduced by Gundlach. We verify the asymptotic behavior predicted by Gundlach's version of Malle's conjecture. We compare the leading constant to recent predictions by Loughran and Santens.
\end{abstract}

\maketitle

\tableofcontents

\section{Introduction}

In \cite{malle2002}, \cite{malle2004} Malle put forward a conjecture concerning the distribution of field extensions of a number field with a given Galois group and bounded discriminant. Let $G$ be a finite transitive permutation group on $n$ points and let $K$ be a number field. By abuse of notation, write $\Gal(M/K)=G$ if $M/K$ is a field extension such that the Galois group of the Galois closure $\overline{M}/K$, viewed as a permutation group on the set of embeddings of $M$ into $\overline{M}$, is permutation isomorphic to $G$.

\begin{conjecture}[\texorpdfstring{\cite[Conjecture 1.1]{malle2004}}{}]
    Let $G\subset S_n$ be a non-trivial transitive permutation group and let $K$ be a number field. Then there exists a
constant $c(K, G) > 0$ such that
\[
\#\{M/K: \Gal(M/K) = G , [M:K] = n,  N_{K/\QQ} (\Delta_{M/K}) \leq X\} \sim c(K, G) X^{a(G)} (\log x)^{b(K,G)-1}
\]
where $a(G)$ and $b(K,G)$ are explicit constants, the first one purely group theoretical and the second one which also depends on the field $K$. 
\end{conjecture}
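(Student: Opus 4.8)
The displayed statement is Malle's conjecture in full generality, and any honest proof plan must begin by conceding that it cannot be proved as stated: Kl\"uners showed that for $G = C_3 \wr C_2$ acting on $6$ points the true exponent of $\log X$ strictly exceeds $b(\QQ,G)-1$, because quadratic subfields hiding a cube in the relevant norm form produce an extra logarithmic factor invisible to Malle's recipe. So the realistic target is to establish the asymptotic for the classes of groups where it is expected to hold --- in particular for the imprimitive groups, such as the $D_4$-in-degree-$4$ action relevant to this paper --- and to leave the general conjecture open, or to prove it with $b(K,G)$ replaced by T\"urkelli's corrected exponent.

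The plan for the cases within reach follows a single template. First I would stratify the set $\{M/K : \Gal(M/K)=G\}$ according to a chain of subfields $K \subset K_1 \subset M$ dictated by a normal series of $G$; for $D_4$ this is the quadratic subfield $K_1/K$ together with the relative quadratic $M/K_1$. Next I would express $N_{K/\QQ}(\Delta_{M/K})$ through the conductor--discriminant formula in terms of $\Delta_{K_1/K}$ and the relative conductor of $M/K_1$, so that the counting function becomes a sum of a multiplicative function over fibers. Then I would parametrize the fibers arithmetically --- by quadratic characters via class field theory in the abelian or metabelian case, by rings of small rank (Davenport--Heilbronn for $S_3$, Bhargava for $S_4$, $S_5$) in the small symmetric case --- and assemble the Dirichlet series $\sum \#\{\cdots\}\, N_{K/\QQ}(\Delta_{M/K})^{-s}$ as an Euler product. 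The heart of the analytic step is to show this product equals $\zeta_K(a(G)s)^{b(K,G)}$ times a function holomorphic and non-vanishing on $\mathrm{Re}(s) > a(G) - \delta$, after which the Landau--Selberg--Delange method (or a Tauberian theorem of Delange--Ikehara type) yields both the power $X^{a(G)}(\log X)^{b(K,G)-1}$ and an explicit constant $c(K,G)$ assembled from the local factors.

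I expect two obstacles, one structural and one computational. The structural one is that there is no uniform mechanism: outside abelian and nilpotent $G$ (Wright; Kl\"uners--Malle) and a few small permutation groups one has neither a class field theory description nor a parametrization of the relevant orders, and Kl\"uners's example shows that ``accidental'' subextensions can secretly enlarge the $\log$-exponent --- so the template above simply does not apply to all $G$, which is exactly why the conjecture fails as written. The computational obstacle, for the groups where the template does work such as $D_4$, is the bookkeeping of the leading constant: matching the Euler factors at primes ramifying in the tower, at primes dividing $|G|$, and the archimedean factor, and checking that their product reproduces the constant predicted by Malle --- or by Loughran--Santens --- is delicate, and is precisely the kind of local computation that the remainder of this paper carries out, in the multi-invariant ordering.
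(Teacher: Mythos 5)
You have not been asked to prove a theorem of the paper here: the statement is Malle's conjecture itself, quoted as background from \cite{malle2004}, and the paper neither proves it nor claims to — indeed, as you correctly point out, it is false as stated (Kl\"uners's $C_3\wr C_2$ example in degree $6$), which is exactly why the paper works instead with Gundlach's modified conjecture with lower bounds on the invariants and proves only the $D_4$ multi-invariant case over $\QQ$. So your refusal to offer a ``proof'' and your reduction to a survey of the known cases is the right posture; there is no proof in the paper against which to measure it.

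Where your sketch and the paper genuinely diverge is in the method you predict for the $D_4$ case. You propose the classical template: fiber over the (bi)quadratic subfield, build the Dirichlet series $\sum N(\Delta)^{-s}$ as an Euler product, and extract the asymptotic by Landau--Selberg--Delange. The paper does something different, and for a reason: because the count is over a four-dimensional box of independent invariants $\inv_1,\dots,\inv_4$ rather than a single parameter, a one-variable Tauberian argument does not directly apply. Instead the authors parametrize Galois $D_4$ octics (pairs $(M,\sigma)$, not $D_4$ quartics — the degree-$4$ action only enters the prior work they cite) as quadratic extensions of biquadratic fields via Stevenhagen's solvability criterion for the conic $x^2-ay^2-bz^2=0$, translate the local solvability into Legendre/Hilbert-symbol conditions on the triple $(m_1,m_2,m_3)$, and then evaluate the resulting multi-variable quadratic character sums directly, using the Friedlander--Iwaniec bilinear estimate to kill the ranges where genuine oscillation occurs and elementary mean-value estimates (their Lemmas 5.1--5.2) on the remaining ranges. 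Your template would also need the twist-counting step (their Lemma 3.2) to control $\inv_4$, which has no analogue in the conductor--discriminant bookkeeping you describe. None of this is a gap in your proposal — the conjecture as stated is unprovable — but if you intend your plan as a route to the paper's actual theorem, the Dirichlet-series/Tauberian step should be replaced by the box-counting character-sum analysis, and the claim about the leading constant should be compared with the Loughran--Santens prediction as in Section 7, where the effective cone constant changes from $\alpha^*/(b-1)!$ to $\alpha^*$ precisely because one counts in boxes rather than hyperbolic regions.
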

This conjecture is known to hold in various cases. For instance, the asymptotics for all abelian groups is known to hold from work of Wright \cite{wright_abelian} using class field theory. A few cases for non-abelian groups are also known but the conjecture remains open in its full generality. 

Malle also gave a heuristic argument to justify the conjecture. The same heuristic has been applied to invariants other than $N_{K/\QQ} (\Delta_{M/K})$, for example Artin conductors or the product of the norms of all ramifed primes.

Recently, Gundlach \cite{gundlach2022mallesconjecturemultipleinvariants} revisited Malle's conjecture by introducing multi-invariants $\inv_i$ by which to order $G$-extensions. Here, a $G$-extension $M/K$ is to be understood as a surjective homomorphism from the absolute Galois group $\operatorname{Gal}(\overline{M}/ K)$ to $G$. This is equivalent to counting over Galois field extensions $M/K$ together with isomorphism $\operatorname{Gal}(M/ K) \simeq G$.  We will restrict our attention to the setting of $D_4$-Galois field extensions over $\QQ$. In this setting, we have four invariants $\inv_i$, each associated to a cyclic subgroup of $D_4$ up to conjugation. For a given cyclic subgroup the corresponding invariant will be the product of tamely ramified primes with inertia group conjugate to this subgroup. These invariants could be used to establish results about counting by other invariants mentioned above (i.e. the Artin conductors and the product of the norms of all ramified primes), in the sense that apart from wildly ramified primes, the latter ones can be expressed in the form $\prod_{i}\inv_i^{h_i}$ for some constants $h_i$. Gundlach introduces the following heuristic.
\begin{heuristic}[ \cite{gundlach2022mallesconjecturemultipleinvariants} Heuristic 1.2] \label{heuristic_all_inv}
    There is a constant $C>0$ such that if the $X_i \to \infty$ for all $i=1,\dots, n$, then 
    \[
        \sum_{\substack{G\text{-ext.} \ M/K \\ \inv_i( M)\leq X_i \ \forall i}} \frac{1}{\# \operatorname{Aut}(M)} \sim C \cdot\prod_i X_i.
    \]
\end{heuristic}
Over the base field $\QQ$ this question also appears earlier in \cite[Question 4.5]{ellenberg_venkatesh}. This heuristic justifies the following conjecture.
\begin{conjecture}[\cite{gundlach2022mallesconjecturemultipleinvariants} Conjecture 1.5] \label{conj_gund}
     Fix some number $0<\delta<1$. There is a constant $C>0$ such that if the $X_i \to \infty$ for all $i=1, \dots n$, then $$\#\{ M/K: \Gal(M/K)\simeq G, \delta X_i< \inv_i(M)\leq X_i \ \forall i  \}\sim C\cdot \prod_i X_i.$$
\end{conjecture}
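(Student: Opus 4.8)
The plan is to deduce Conjecture~\ref{conj_gund}, as stated for an arbitrary finite group $G$ and number field $K$, from Heuristic~\ref{heuristic_all_inv} (the multi-invariant Malle--Bhargava heuristic recalled above) by two essentially formal steps: first convert the $\operatorname{Aut}$-weighted count of $G$-extensions appearing in the heuristic into an unweighted count of $G$-field extensions, and then pass from the ``box'' regions $\{\inv_i(M)\le X_i\ \forall i\}$ to the ``corner'' regions $\{\delta X_i<\inv_i(M)\le X_i\ \forall i\}$ by inclusion--exclusion.

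\emph{Step 1: weighted $G$-extensions versus fields.} Fix $K$; a $G$-extension is a surjection $\varphi\colon\operatorname{Gal}(\overline{K}/K)\to G$, two being identified when related by a field automorphism. Each $\inv_i$ is defined from inertia groups up to conjugacy, hence is insensitive to $\varphi$ and descends to a function of the kernel $N_\varphi$, i.e.\ of the Galois field $M=\overline{K}^{N_\varphi}$ alone. Over a fixed such $M$, the surjections with kernel $N_\varphi$ form an $\operatorname{Aut}(G)$-torsor under post-composition on which $\operatorname{Gal}(M/K)\cong G$ acts by pre-composition; this action factors through $\operatorname{Inn}(G)$, and the stabiliser of any $\varphi$ is $\varphi^{-1}(Z(G))$, of order $\#Z(G)$, so an orbit--stabiliser count shows the groupoid of $G$-extensions over $M$ has cardinality $\#\operatorname{Aut}(G)/\#G$, independently of $M$. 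Hence
\[
\sum_{\substack{G\text{-ext.}\ M/K\\ \inv_i(M)\le X_i\ \forall i}}\frac{1}{\#\operatorname{Aut}(M)}
=\frac{\#\operatorname{Aut}(G)}{\#G}\cdot\#\bigl\{\text{fields }M/K:\operatorname{Gal}(M/K)\cong G,\ \inv_i(M)\le X_i\ \forall i\bigr\},
\]
an exact identity, valid also with each $X_i$ replaced by $\delta X_i$. Together with Heuristic~\ref{heuristic_all_inv} this already yields Conjecture~\ref{conj_gund} for box regions, with leading constant $\frac{\#G}{\#\operatorname{Aut}(G)}C$ (or $\#Z(G)\,C$ if one counts $G$-extensions up to isomorphism rather than fields).

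\emph{Step 2: boxes to corners.} Writing $A_i=\{M:\inv_i(M)\le X_i\}$ and $B_i=\{M:\inv_i(M)\le\delta X_i\}\subseteq A_i$, the corner region is $\bigcap_i(A_i\setminus B_i)$ and inclusion--exclusion gives
\[
\#\bigcap_i(A_i\setminus B_i)=\sum_{S\subseteq\{1,\dots,n\}}(-1)^{|S|}\,\#\Bigl(\bigcap_{i\notin S}A_i\cap\bigcap_{i\in S}B_i\Bigr).
\]
Each of the $2^n$ terms is a box count; since $0<\delta<1$ is fixed, $X_i\to\infty$ forces $\delta X_i\to\infty$ too, so by Step~1 and Heuristic~\ref{heuristic_all_inv} the $S$-term is asymptotic to $\frac{\#G}{\#\operatorname{Aut}(G)}\,C\,\delta^{|S|}\prod_i X_i$. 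A finite linear combination of functions each asymptotic to a fixed main term $\prod_i X_i$ is asymptotic to $\prod_i X_i$ times the sum of the coefficients, provided that sum is nonzero; here it is $\frac{\#G}{\#\operatorname{Aut}(G)}\,C\sum_S(-1)^{|S|}\delta^{|S|}=\frac{\#G}{\#\operatorname{Aut}(G)}\,C\,(1-\delta)^n$, which is positive since $0<\delta<1$. This establishes Conjecture~\ref{conj_gund} with $C'=\frac{\#G}{\#\operatorname{Aut}(G)}\,C\,(1-\delta)^n>0$ (depending on $\delta$, as the statement allows).

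\emph{The main obstacle.} Granting Heuristic~\ref{heuristic_all_inv}, the argument is formal: the identity of Step~1 is exact, and Step~2 uses only finitely many applications of a genuine asymptotic, so it preserves asymptotics with no uniformity or error-term input. The real content is Heuristic~\ref{heuristic_all_inv} itself, a Malle--Bhargava-type multi-invariant count that is open for general $G$ and $K$; this is where all the arithmetic difficulty --- local densities at tame and wild primes, summation over the resulting Euler product, and control of the tail --- is concentrated. The body of this paper supplies exactly such an asymptotic, with an explicit leading constant, in the case $G=D_4$, $K=\mathbb{Q}$, and Steps~1 and~2 then upgrade it to Conjecture~\ref{conj_gund} in that case; the same two steps deduce the conjecture in any other case where the heuristic can be proved, e.g.\ for abelian $G$ via class field theory, or whenever a multi-invariant count with a power-saving error term is available.
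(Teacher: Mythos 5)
Your proposal does not prove \Cref{conj_gund}; it proves (at best) the implication ``\Cref{heuristic_all_inv} $\Rightarrow$ \Cref{conj_gund}'', and \Cref{heuristic_all_inv} is itself an open heuristic for general $G$ and $K$. That implication is already recorded in the paper in one sentence (``\Cref{heuristic_all_inv} implies \Cref{conj_gund}, as $\operatorname{Aut}(M)$ only depends on the group $G$''), and the intended deduction is simpler than your Step 1: the conjecture counts $G$-extensions in Gundlach's sense (surjections $\Gamma_K\to G$ up to conjugation), whose automorphism groups all have order $\#Z(G)$, so the weighted count in the heuristic is just $1/\#Z(G)$ times the unweighted one; your Step 2 (inclusion--exclusion from boxes to corners, giving the factor $(1-\delta)^n$) is fine and is the only genuinely needed observation. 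The actual content of the paper is an \emph{unconditional} proof of the $G=D_4$, $K=\mathbb{Q}$ case (\Cref{main_thm}), obtained not by establishing \Cref{heuristic_all_inv} abstractly but by parametrizing pairs $(M,\sigma)$ via the biquadratic subfield and Stevenhagen's criterion, and then estimating the resulting character sums with Friedlander--Iwaniec-type bounds; your proposal defers exactly this arithmetic work to the unproved heuristic, so it does not establish the statement in any case not already covered.

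There is also a concrete error in your Step 1. You claim each $\inv_i$ ``is insensitive to $\varphi$ and descends to a function of the kernel'', i.e.\ of the field $M$ alone. This is false in general and false precisely in the paper's case: the invariants are indexed by conjugacy classes of cyclic subgroups of $G$, and an outer automorphism of $G$ can permute these classes --- for $D_4$ the automorphism $r\mapsto r$, $s\mapsto rs$ interchanges $\langle s\rangle$ and $\langle rs\rangle$, hence interchanges $\inv_1$ and $\inv_2$. This is exactly why the paper counts pairs $(M,\sigma)$ rather than fields. Consequently your ``exact identity'' equating the $\operatorname{Aut}$-weighted count with $\frac{\#\operatorname{Aut}(G)}{\#G}$ times a count of fields fails whenever the $X_i$ differ across an $\operatorname{Out}(G)$-orbit of invariants (e.g.\ $X_1\neq X_2$ for $D_4$): for a given field $M$, some of the surjections with kernel $N_\varphi$ may satisfy the box conditions while others do not, so the fibre count over $M$ is not constant. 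The correct bookkeeping is to keep the count at the level of surjections (equivalently pairs $(M,\sigma)$, which is what \Cref{main_thm} does, with the factors $8$ and $4$ tracked in \Cref{lemma:M_reformulated_second} and the surrounding remarks), not to push it down to fields.
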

Here the lower bounds $\delta X_i$ on the invariants are introduced in order to avoid some known counterexamples that disprove Malle's conjecture and which would disprove the conjecture in this setting as well (see \cite{klueners2004counterexamplemallesconjecture}). The known counterexamples all have some bounded invariant, so that the proposed conjecture with lower bounds may still hold. Note that in the setting of counting $D_4$ octics, we need not exclude field extensions with bounded small invariants. The purpose of introducing the lower bounds $\delta X_i$ on the invariants in \Cref{conj_gund} is to omit from the count all Galois extensions $M$ of $K$ that contain a field $K\subsetneq K' \subseteq K(\zeta_n)$ with $n=\#G$. But this does not happen in our setting so we do not need to include this condition in our count.

Note that for any finite group $G$, \Cref{heuristic_all_inv} implies \Cref{conj_gund}, as $\operatorname{Aut}(M)$ only depends on the group $G$. Gundlach proves \Cref{heuristic_all_inv} for all abelian groups. In \cite{shankar2024asymptoticscubicfieldsordered}, Shankar and Thorne prove a version of \Cref{heuristic_all_inv} for $G=S_3$, where rather than letting the upper bounds $X_i$ on all invariants go to $\infty$, they let only some of the upper bounds go to $\infty$ and fix the remaining invariants.

In \cite{ellenberg_satriano_zureick-brown}, \cite{DardaYasuda2024} a notion of the Manin conjecture for stacks is introduced and the authors show that Malle's conjecture can be read as a special version of Manin's conjecture for stacks. In this setting, Loughran and Santens \cite{loughran2025mallesconjecturebrauergroups} make a precise prediction for the leading constant in Malle's conjecture, when ordering the field extensions by the product of ramified primes. The conjecture on the constant resembles the constant predicted by Peyre \cite{peyre95} for counting rational points in the setting of Manin's conjecture. Peyre developed a multi-height approach to counting rational points (\cite{peyre-multi-heights}). Gundlach's multi-height approach can be interpreted as an analogue to Peyre's multi-height setting.

In this paper, we show that \Cref{conj_gund} holds for $K=\QQ$ and $G= D_4$ under some mild additional assumptions on the relative sizes of the $X_i$.

The classical Malle conjecture has been proven for $D_4$ quartic extensions in \cite{cohenD4} and the case of ordering $D_4$- quartics by Artin conductor has been proven in \cite{altug2017numberquarticd4fieldsordered}. Recently, Shankar and Varma \cite{shankar2025mallesconjecturegaloisoctic} proved Malle's conjecture for $D_4$ octics.

In order to show \Cref{conj_gund}, we need a mild additional assumption, guaranteeing that the bounds on the invariants are comparable in size. 

The invariants defined by Gundlach rely on the isomorphism $\operatorname{Gal}(M/\QQ) \to D_4$. In fact, we will see that an outer automorphism of $D_4$ interchanges the role of two of the invariants. Thus, when setting up the count, we have to keep track of the maps $\operatorname{Gal}(M/\QQ) \to D_4$ in order to define the invariants uniquely. The main goal of this paper is to prove the following result.
\begin{theorem} \label{main_thm}
Let $X_1, X_2, X_3, X_4 \geq 1$. Let $\operatorname{X} = \max_{1\leq i\leq 3} \{X_i\}$. Assume that $\min_{1\leq i\leq 4} \{X_i\} \gg (\log \operatorname{X})^{A}$ for an ${A> 111}$. Denote by $\inv_1, \inv_2, \inv_3, \inv_4$ the four invariants of a $D_4$ Galois field $M$ with isomorphism $\sigma : \operatorname{Gal}(M, \QQ) \to D_4$ defined in \eqref{eq:def_of_invariants_inertia}.  Then
\begin{align*}
    &\# \{(M, \sigma): M/\QQ\text{ Galois extension, } \sigma :\operatorname{Gal}(M/ \QQ) \overset{\simeq}{\to} D_4, \ \operatorname{inv}_i M \leq  X_i \} \\
    & \hspace{0.2cm}= {\frac{27}{8}}\prod_{p>2} \left(1-\frac{1}{p}\right)^4 \left(1+\frac{4}{p}  \right) X_1 X_2 X_3X_4  \cdot \left(1 + O\left((\log \operatorname{X})^{\frac{111-A}{19}} \right)\right)\,.  
\end{align*}
\end{theorem}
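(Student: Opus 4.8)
The plan is to reduce the counting of the pairs $(M,\sigma)$ to a sum over more elementary arithmetic data, using the structure of $D_4$. The group $D_4$ has center $Z = Z(D_4)\cong\mathbb{Z}/2$ with $D_4/Z\cong(\mathbb{Z}/2)^2$; so to each $(M,\sigma)$ one attaches the biquadratic field $L:=M^{Z}$, a $(\mathbb{Z}/2)^2$-extension of $\QQ$, together with the quadratic extension $M/L$. The three quadratic subfields of $L$ correspond to the three subgroups of order $4$ in $D_4$, one of which ($\langle r\rangle$) is cyclic; the isomorphism $\sigma$ pins down which subfield is the "cyclic" one $k:=M^{\langle r\rangle}$ and which of the remaining two is which — this is precisely the data destroyed by the outer automorphism of $D_4$, hence what must be tracked. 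Equivalently, one may parametrize $(M,\sigma)$ by the quadratic field $k$ together with a character $\psi$ of order exactly $4$ of a ray class group of $k$ that is anti-invariant under $\operatorname{Gal}(k/\QQ)$ (i.e. $\psi^{\iota}=\psi^{-1}$), subject to one further coset/local condition at the finite set of bad places which distinguishes the solutions of the $D_4$-embedding problem from those of the $Q_8$-embedding problem (and from abelian ones). I would set up whichever of these two equivalent bookkeepings makes the ramification conditions cleanest.

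Next I would translate the four invariants into ramification data of this package. Since every tamely ramified prime $p$ has cyclic inertia $I_p$ in $D_4$, its image in $D_4/Z\cong(\mathbb{Z}/2)^2$ has order at most $2$, and a short case analysis then gives: $p$ contributes to the $\langle r\rangle$-invariant iff $p$ ramifies in exactly the two non-cyclic quadratic subfields of $L$; $p$ contributes to one of the two reflection-invariants iff $p$ ramifies in $k$ and in exactly one of the other two subfields (which one being recorded by $\sigma$); and $p$ contributes to the $Z$-invariant iff $p$ is unramified in $L$ but ramified in $M/L$. The wild prime $p=2$ lies in finitely many local configurations and contributes only a bounded local factor. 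Consequently three of the four invariants are, up to powers of $2$, the radicals of the three "mixed" parts of $\operatorname{disc}(L)$ — hence determined by $L$ and the labelling — while the fourth is the conductor of $M/L$ supported away from $\operatorname{disc}(L)$. In particular $\operatorname{X}=\max_{1\le i\le3}X_i$ controls the size of $\operatorname{disc}(L)$, which can be as large as a fixed power of $\operatorname{X}$.

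With this in hand the count becomes a double sum: an outer sum over biquadratic fields $L$ (equivalently over triples of pairwise coprime squarefree integers with a bounded number of sign choices) with the three $L$-invariants bounded by $X_1,X_2,X_3$, and an inner sum counting quadratic extensions $M/L$ — with ramification at the primes dividing $\operatorname{disc}(L)$ forced by the $D_4$-condition, free at the remaining primes — of conductor $\le X_4$ away from $\operatorname{disc}(L)$, satisfying the $D_4$-vs-$Q_8$ and archimedean local conditions. The inner count is carried out by class field theory / Kummer theory over $L$: up to the $2$-class-group factor of $L$ and the $2$-adic and infinite local factors, it is a count of quadratic characters of a ray class group of $L$ of bounded conductor, which grows linearly in $X_4$. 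Summing the main terms over $L$ and performing the standard squarefree/coprimality sieve over the outer variables produces the asymptotic: each prime $p>2$ contributes a local factor $(1-1/p)^{4}(1+4/p)$ — the $1+4/p$ counting the four mutually exclusive inertia types available at $p$ and the $(1-1/p)^{4}$ normalizing the four linearly-growing counts — while the archimedean place and the prime $2$ combine into the rational constant $\tfrac{27}{8}$.

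The main obstacle is making the inner count \emph{uniform} in the base field $L$, whose discriminant grows with $\operatorname{X}$ up to a fixed power. Counting quadratic extensions of $L$ of bounded conductor with prescribed local behaviour amounts to a character-sum estimate, i.e. a Siegel--Walfisz-type input over the number field $L$, and the available effective bounds degrade as $\operatorname{disc}(L)$ grows; one therefore obtains a genuine saving only once the $X_i$ exceed a sufficiently large power of $\log\operatorname{X}$, which forces the hypothesis $\min_i X_i\gg(\log\operatorname{X})^{A}$ and, after optimizing the truncation parameters against the effective error terms, yields the error $O\!\left((\log\operatorname{X})^{(111-A)/19}\right)$, the exponents $111$ and $19$ being the numerical output of that optimization. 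Two further bookkeeping points that need care, though they do not change the shape of the answer, are isolating the $D_4$-solutions from the $Q_8$-solutions (a coset condition whose local density must be controlled uniformly across the family), and handling the wildly ramified prime $2$ and the archimedean place, which is where $\tfrac{27}{8}$ ultimately comes from.
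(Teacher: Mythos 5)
Your structural skeleton matches the paper's: both pass to the biquadratic $L=M^{\langle r^2\rangle}$, track the outer automorphism through the labelling of the two non-cyclic quadratic subfields, translate the four invariants into ramification data of the subfields exactly as you describe, and extract the local factors $\left(1-\frac1p\right)^4\left(1+\frac4p\right)$ from the four available inertia types. But there is a genuine gap at the analytic heart of the argument. Writing $L=\QQ(\sqrt{m_1m_2},\sqrt{m_1m_3})$, the solvability of the $D_4$-embedding problem over $L$ is the Hasse condition for the conic $x^2-m_1m_2y^2-m_1m_3z^2=0$, which unwinds to conditions such as $\legendre{-m_2m_3}{p}=+1$ for every odd $p\mid m_1$ (and symmetrically for $m_2$, $m_3$). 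This is not a ``coset condition at the finite set of bad places'' with a local density to be inserted uniformly: the places involved are the ramified primes, which vary with $L$, and the condition correlates the prime factorizations of $m_1,m_2,m_3$. Detecting it via $\prod_{p}\frac12\bigl(1+\legendre{\cdot}{p}\bigr)$ and expanding produces triple Jacobi-symbol sums of the shape $\sum\legendre{\ell_1}{k_2k_3}\legendre{\ell_2}{k_1k_3}\legendre{\ell_3}{k_1k_2}$ with $k_i\ell_i=m_i$, and the entire difficulty is showing that the non-principal terms cancel; the paper does this with the Friedlander--Iwaniec bilinear estimate on the ranges where two linked variables are both large, plus P\'olya--Vinogradov-type bounds for small moduli. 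Your ``standard squarefree/coprimality sieve over the outer variables'' does not see this condition and cannot produce the correct density of admissible $L$.

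Relatedly, the difficulty you identify as the main obstacle --- uniformity in $\operatorname{disc}(L)$ of a Siegel--Walfisz-type count of quadratic extensions of $L$ --- is one the paper avoids entirely: by Stevenhagen's explicit parametrization, once one admissible extension $L(\sqrt{\beta})$ exists (unramified over $L$ at odd primes), every other one is a rational twist $L(\sqrt{t\beta})$ with $t\in\QQ^*/\langle m_1m_2,m_1m_3,\QQ^{*2}\rangle$, so the inner count reduces to an elementary count of squarefree $\tilde t\leq X_4$ coprime to $2m_1m_2m_3$, with error $O(\tau(m_1'm_2'm_3')\sqrt{X_4})$ and no class-field-theoretic input over $L$. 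Consequently the exponents $111$ and $19$ do not arise from effective Chebotarev or Siegel--Walfisz over varying base fields; they come from optimizing a truncation parameter $V$ (a power of $\log\operatorname{X}$) balancing the $V^{-1/6}$ saving of the bilinear estimate against the roughly $V^3$ small-modulus terms, each costing a factor $\sqrt{X_iX_j}$. To complete your proposal you would need to supply the character-sum analysis of the embedding-problem condition; the class-field-theoretic uniformity issue you anticipate can be sidestepped.
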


\begin{remark}
    The assumption that $\min_{1\leq i\leq 4} \{X_i\} \gg (\log \operatorname{X})^{A}$ for an ${A> 111}$ arises quite naturally from the methods we use. We expect the statement of the result to be true also without this constraint, but our approach precludes us from handling extremely lop-sided regions. 
\end{remark}

 \begin{remark}
     The constant in \Cref{main_thm} may be read as
     \[
     |D_4|\; \cdot\,   \alpha^* \cdot \tau_\infty \cdot\tau_2 \cdot \prod_{p>2} \tau_p
     \]
     where $|D_4| = 8$, $\alpha^* = \frac{1}{4}$ is the (normalized) effective cone constant and $\tau_\nu$ the local Tamagawa density, here $\tau_\infty = \frac{3}{4}$ and $\tau_2 = \frac{9}{4}$. \Cref{main_thm} may then be read as an all-the-heights analogue of what  \cite[Conjecture 9.1]{loughran2025mallesconjecturebrauergroups} predicts for $D_4$-Galois field extensions of $\mathbb{Q}$. See \Cref{sec:constant} for a detailed discussion.
 \end{remark}

The paper is structured as follows. In \Cref{seq:multi_invariants} we look at the definition of multi-invariants and describe how this translates to the case of $D_4$-extensions of $\mathbb{Q}$, giving an interpretation in terms of the ramification behaviour of rational primes in the different sub-fields of the extension. In \Cref{sec:parametrization} we introduce a parametrization of $D_4$-octics which relies upon the description of such fields as quadratic extensions of biquadratic extensions of $\mathbb{Q}$. This allows us to set up the counting problem in \Cref{sec:counting}. The counting problem involves sums over quadratic characters, first studied in \cite{friedlanderiwaniec}. The strategy for counting closely follows their approach together with that of \cite{rome2018hassenormprinciplebiquadratic}. In \Cref{sec:lemmas} we prove some auxiliary lemmas which are needed to estimate our asymptotics. The proof of the main result is carried out in \Cref{sec:proof_of_main} using the lemmas from the previous section. Finally, in \Cref{sec:constant} we discuss how the leading constant relates to the prediction of \cite{loughran2025mallesconjecturebrauergroups}. 
\subsection*{Acknowledgments}
We would like to thank Nick Rome for proposing the problem to us and for his close supervision throughout the development of this work. His insightful suggestions, thoughtful guidance, and many helpful conversations were invaluable to the progress and completion of this project. Moreover, we would like to thank our respective supervisors Christopher Frei and Tim Browning for their helpful comments and feedback. We would also like to thank Tim Santens, Peter Koymans, Stephanie Chan and Fabian Gundlach for their valuable comments. WH was supported by the Austrian Science Fund (FWF), project 10.55776/DOC183.   

\section{Multi-invariants for $D_4$-extensions} \label{seq:multi_invariants}

In \cite{gundlach2022mallesconjecturemultipleinvariants}, Gundlach defines multi-invariants $\operatorname{inv}_i(M)$ for Galois extensions $M/K$ of number fields with a fixed Galois group. The purpose of this section is to understand these definitions in the case of $D_4$-extensions of $\mathbb{Q}$. \\ Fix a finite group $G$ and a number field $K$. For a $G$-extension $M/K$, Gundlach gives a general definition of ramification types of tamely ramified primes $\mathfrak{p}$ in $K$ (see \cite[Def. 2.1 and Def. 2.2]{gundlach2022mallesconjecturemultipleinvariants}). Following \cite[Example 2.3]{gundlach2022mallesconjecturemultipleinvariants}, in the case where $K=\mathbb{Q}$ ramification types simply correspond to conjugacy classes of cyclic subgroups $I$ of $G$. The ramification type of $\mathfrak{p}$ in $M$ then corresponds to its inertia group, up to conjugation. This leads to our definition of the multi-invariants, which is the special case of  $K=\QQ$ of the more general definition given by Gundlach.
\begin{definition}
    Denote by $I_i$ the non-trivial conjugacy classes of cyclic subgroups of $G$ under conjugation (by non-trivial we mean $I_i\neq \bm{1}$). We define the $i$-th multi-invariant to be the product of all tamely ramified primes $\mathfrak{p}$ of $\QQ$ with inertia group (conjugate to) $I_i$, $$\inv_i(M)=\prod_{\substack{p: I_p\text{ conjugate to } I_i \\ p \text{ tamely ramfied}}} p\,.$$
\end{definition}

  In our case of $G=D_4 =\langle r, s \mid s^2 = r ^4= 1, rs = sr^{-1} \rangle$, we have four non-trivial conjugacy classes of cyclic subgroups. These are $$ \langle s \rangle, \quad \langle rs  \rangle, \quad \langle r \rangle, \quad \langle r^2 \rangle. $$

$D_4$ has non-trivial outer automorphisms and the one given by $(r\mapsto r, s\mapsto rs)$ interchanges the two cyclic subgroups $\langle s \rangle$, $\langle rs \rangle$. Hence, to properly define the invariants we have to keep track of the  isomorphism $\operatorname{Gal}(M/\QQ) {\simeq} D_4$. To do that, fix an algebraic closure $\overline{\QQ}/\QQ$ and denote by $\Gamma$ the absolute Galois group $\operatorname{Gal}(\overline{\QQ}/\QQ)$. A natural way to keep track of the isomorphisms is to count surjective homomorphism $\varphi: \Gamma \to D_4$. This corresponds to counting tuples $(M, \sigma)$ where $\sigma: \operatorname{Gal}(M/\QQ) \overset{\simeq}{\to} D_4$. 

For a pair $(M, \sigma)$, we define four invariants associated to these four conjugacy classes in the following way. Write $I_{p}$ for the image of the inertia group of a rational prime $p$ under the map $\sigma$.  Then we define
\begin{equation}
\begin{aligned}
    \operatorname{inv}_1(M)&= \prod_{p : I_{p}\text{ conjugate to } \langle s \rangle} p; \\ 
    \operatorname{inv}_2(M)&= \prod_{p : I_{p}\text{ conjugate to } \langle rs  \rangle} p; \\ 
    \operatorname{inv}_3(M)&= \prod_{p : I_{p} \text{ conjugate to } \langle r \rangle} p; \\
    \operatorname{inv}_4(M)&= \prod_{p : I_{p}\text{ conjugate to } \langle r^2 \rangle} p.
\end{aligned}
    \label{eq:def_of_invariants_inertia}
\end{equation}
These are all finite products over primes that ramify in the field extensions, and each of these primes appears in precisely one of these invariants according to its splitting type, which determines its inertia group. \\ 
The subfield diagram of a $D_4$-Galois extension $M/ \mathbb{Q}$ is the following. \ 
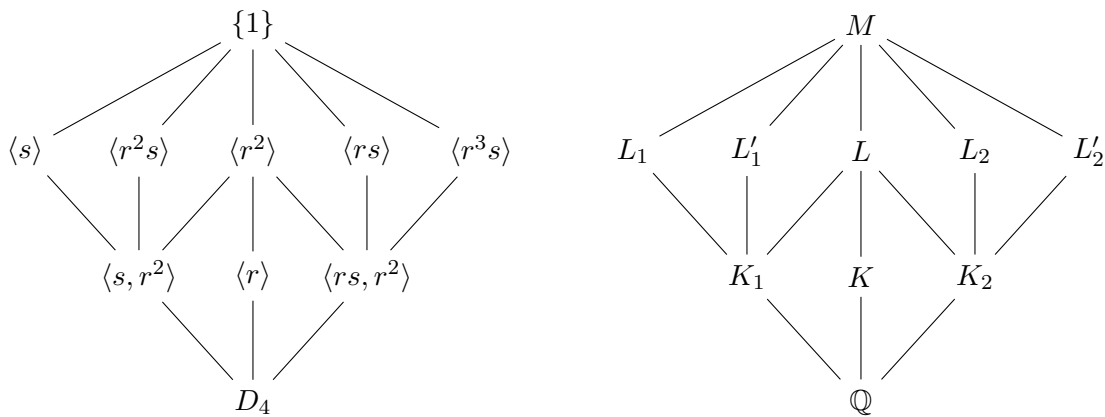
\begin{figure}[H]
\begin{tikzpicture}[node distance=1cm, auto]

    \node (G) at (-8, 0) {$D_4$};
    \node (A) at (-9.5, 1.67) {$\langle s, r^2 \rangle$};
    \node (B) at (-8, 1.67) {$\langle r \rangle$};
    \node (C) at (-6.5, 1.67) {$\langle rs, r^2 \rangle$};
    \node (D) at (-11, 3.33) {$\langle s \rangle$};
    \node (D') at (-9.5, 3.33) {$\langle r^2 s \rangle$};
    \node (E) at (-6.5, 3.33) {$\langle rs \rangle$};
    \node (E') at (-5, 3.33) {$\langle r^3 s \rangle$};
    \node (F) at (-8, 3.33) {$\langle r^2 \rangle$}; 
    \node (Top) at (-8, 5) {$\{1\}$};

    \draw[-] (G) -- (A);
    \draw[-] (G) -- (B);
    \draw[-] (G) -- (C);
    \draw[-] (A) -- (D);
    \draw[-] (A) -- (D');
    \draw[-] (A) -- (F);
    \draw[-] (C) -- (F);
    \draw[-] (C) -- (E);
    \draw[-] (C) -- (E');
    \draw[-] (B) -- (F);
    \draw[-] (D) -- (Top);
    \draw[-] (D') -- (Top);
    \draw[-] (E) -- (Top);
    \draw[-] (E') -- (Top);
    \draw[-] (F) -- (Top);

    \node (Q) at (0, 0) {$\mathbb{Q}$};
    \node (Qa) at (-1.5, 1.66) {$K_1$};
    \node (Qab) at (0, 1.66) {$K$};
    \node (Qb) at (1.5, 1.66) {$K_2$};
    \node (L1) at (-3, 3.33) {$L_1$};
    \node (L1p) at (-1.5, 3.33) {$L_1'$};
    \node (L2) at (1.5, 3.33) {$L_2$};
    \node (L2p) at (3, 3.33) {$L_2'$};
    \node (Qabext) at (0, 3.33) {$L$};
    \node (M) at (0, 5) {$M$};

    \draw[-] (Q) -- (Qa);
    \draw[-] (Q) -- (Qb);
    \draw[-] (Q) -- (Qab);
    \draw[-] (Qa) -- (L1);
    \draw[-] (Qa) -- (L1p);
    \draw[-] (Qa) -- (Qabext);
    
    \draw[-] (Qb) -- (Qabext);
    \draw[-] (Qb) -- (L2);
    \draw[-] (Qb) -- (L2p);
    \draw[-] (Qab) -- (Qabext);
    \draw[-] (L1) -- (M);
    \draw[-] (L1p) -- (M);
    \draw[-] (L2) -- (M);
    \draw[-] (L2p) -- (M);
    \draw[-] (Qabext) -- (M);

\end{tikzpicture} 
\caption{Subfields of a $D_4$ octic as fixed fields by subgroups of $D_4$ }
  \label{fig:subfields}
\end{figure}
\
Here any field on the left appears in the position as a subgroup $H$ if it is the fixed field of $M$ by $H$ under the isomorphism $\sigma$. Here, $K_1$ and $K_2$ are quadratic fields, $L$ is the unique quartic Galois subfield of $M$ with $K$ its quadratic subfield fixed by the rotation inside $D_4$ and $L_1, L_2, L_1', L_2'$ are the non-Galois $D_4$ quartics inside $M$. 
In \cite{altug2017numberquarticd4fieldsordered} we find a complete description of the splitting behavior of primes in $D_4$-fields which are summarized by \Cref{tab:splitting}. Recall the following definition. 
\begin{definition}
    If $F$ is a number field, then the splitting type $\varsigma_p(F)$ at $p$ of $F$ satisfies $$\varsigma_p(F)=(f_1^{e_1}f_2^{e_2}\dots) \iff \mathcal{O}_F/p\mathcal{O}_F \simeq \mathbb{F}_{p^{f_1}}[t_1]/(t_1^{e_1})\oplus \mathbb{F}_{p^{f_2}}[t_2]/(t_2^{e_2})\oplus \dots \ .$$
\end{definition}
Recall that we are only interested in the behavior of tamely ramified primes.  Here $I_p$ denotes the conjugacy class of the inertia group of $p$ under the isomorphism $\sigma$.
\begin{table}[h] 
    \centering
    \renewcommand{\arraystretch}{1.3}
    \begin{tabular}{ccccccccc}
        \toprule
        $I_p$ & $\varsigma_p(M)$  & $\varsigma_p(K_1)$  & $\varsigma_p(K_2)$ & $\varsigma_p(L)$ & $\varsigma_p(K)$ \\
        \midrule
        
        $\langle s \rangle$ &  $(1^21^21^21^2)$ &  $(11)$  & $(1^2)$ & $(1^21^2)$ & $(1^2)$ \\
        $\langle s \rangle$ & $(2^22^2)$  & $(11)$ &  $(1^2)$ & $(1^21^2)$ & $(1^2)$ \\
        $\langle rs \rangle$  & $(1^21^21^21^2)$ & $(1^2)$ & $(11)$ & $(1^21^2)$ & $(1^2)$ \\
        $\langle rs \rangle$  & $(2^22^2)$  & $(1^2)$ & $(11)$ & $(1^21^2)$ & $(1^2)$ \\
        $\langle r \rangle$  & $(1^41^4)$ & $(1^2)$ &  $(1^2)$ & $(1^21^2)$ & $(11)$ \\
        $\langle r \rangle$ & $(2^4)$ & $(1^2)$ & $(1^2)$ & $(2^2)$ &  $(2)$ \\
        $\langle r^2 \rangle$ & $(1^21^21^21^2)$ &  $(11)$ & $(11)$ & $(1111)$ & $(11)$ \\
        $\langle r^2 \rangle$ & $(2^22^2)$ &$(11)$ &  $(2)$ & $(22)$ & $(2)$ \\
        $\langle r^2 \rangle$ & $(2^22^2)$ & $(2)$ &  $(11)$ & $(22)$ & $(2)$ \\
        $\langle r^2 \rangle$ & $(2^22^2)$ & $(2)$ & $(2)$ & $(22)$ & $(11)$ \\
        \bottomrule
    \end{tabular}
    \caption{Splitting type of a tamely ramified prime with given inertia group in the subfields $M, K_1, K_2, K, L$}
    \label{tab:splitting}
\end{table}
According to \Cref{tab:splitting}, we can classify primes that tamely ramify according to their inertia group by determining their splitting behavior in the sub-fields $K_1$ and $K_2$, which are quadratic extensions of $\mathbb{Q}$. Note that in the case of a $D_4$-extension, whenever $2$ ramifies it has wild ramification, while every odd prime that ramifies is always tamely ramified. In the definition of ramification types and multi-invariants, only the behaviour of tamely ramified primes is taken into account. For an odd rational prime $p$ we have 
\begin{equation}
\begin{gathered}
    p : I_p \text{ conjugate to } \langle rs\rangle \iff p \text{ ramifies in } K_1, \text{ but not in } K_2; \\
    p : I_p \text{ conjugate to } \langle s\rangle \iff p \text{ ramifies in } K_2, \text{ but not in } K_1; \\
    p : I_p \text{ conjugate to } \langle r\rangle  \iff p \text{ ramifies in both } K_1 \text{ and } K_2; \\
    p : I_p \text{ conjugate to } \langle r^2 \rangle \iff p \text{ ramifies in $M$ but does not ramify in either } K_1 \text{ or } K_2
\end{gathered}
\label{eq:def_invaraiants_subfields}
\end{equation}
Setting $K_1=\QQ(\sqrt{m_1m_2})$, $ K_2=\QQ(\sqrt{m_1m_3}), K = \QQ\left(\sqrt{m_2m_3}\right), L=\QQ(\sqrt{m_1m_2}, \sqrt{m_1m_3})$ for square-free pairwise coprime $m_1, m_2, m_3$ such that $m_1>0$, we then have 
\begin{align*}
    \inv_1(M)&= \prod_{\substack{p|m_2\\ p \text{ odd}}} p; \\ \inv_2(M)&= \prod_{\substack{  p\mid m_3\\ p \text{ odd}}} p; \\ \inv_3(M)&= \prod_{\substack{p\mid m_1 \\ p \text{ odd}}} p; \\ \inv_4(M)&= \prod_{\substack{p| \Delta_M \\ p\nmid a, \ p\nmid b\\ \\ p \text{ odd}} }p.
\end{align*}
In the last expression, $\Delta_M$ is the discriminant of the field. The first three invariants correspond thus to the positive odd parts of $m_1, m_2, m_3$ respectively. The fourth invariant, on the other hand, is given by a product over the odd primes that ramify in the extension $M/\mathbb{Q}$ but do not ramify in $\mathbb{Q}(\sqrt{m_1m_2},\sqrt{m_1m_3})/\mathbb{Q}$. 

Note that these invariants do not distinguish extensions according to whether the rational prime $2$ ramifies or not, since only the splitting type of odd primes is considered. This will have to be accounted for when we perform the counting.

\section{Parameterizing $D_4$-extensions} \label{sec:parametrization}
We parametrize the pairs $(M, \sigma)$ of $D_4$-extensions $M/\QQ$ and isomorphism $\operatorname{Gal}(M/\QQ) \overset{\sigma}{\simeq} D_4$ by fixing the unique quartic Galois subfield $\QQ(\sqrt{m_1m_2}, \sqrt{m_1m_3})$, and then regarding $M$ as a quadratic extension of this biquadratic field. The following result is a corollary due to Stevenhagen {\cite[Cor. 5.2]{stevenhagen2021redeireciprocitygoverningfields}} establishes under which conditions a biquadratic field admits an extension which is $D_4$ over $\QQ$, and it shows that this indeed parametrizes all $D_4$-extensions of $\QQ$. We note that recently Gaudet and Wong \cite{gaudet2025countingbiquadraticnumberfields} counted biquadratic extensions that admit a $D_4$ extension with bounded discriminant. The criteria for determining biquadratics that admit a $D_4$ extension can be recovered from \cite[Cor. 5.2]{stevenhagen2021redeireciprocitygoverningfields}. For our purposes, we also need to parametrize all the $D_4$ extensions that lie over a given biquadratic. We have   

\begin{lemma}[Corollary 5.2 \cite{stevenhagen2021redeireciprocitygoverningfields}] \label{stevenhagen_d4}
    Let $a, b\in \QQ^*$, $a, b$ distinct modulo squares and $a, b$ not perfect squares. Let $L=\QQ(\sqrt{a}, \sqrt{b})$. Then a quadratic extension $L\subset M$ is dihedral of degree $8$ over $\QQ$ and cyclic over $\QQ\left(\frac{\sqrt{ab}}{\gcd(a,b)}\right)$ if and only if there exists a non-zero rational solution to the conic equation 
    \begin{equation}\label{eq:conic}
        x^2-ay^2-bz^2=0
    \end{equation}
    such that for $\beta = x+y\sqrt{a}\in \QQ(\sqrt{a})$ and $\alpha = 2(x+z\sqrt{b})\in \QQ(\sqrt{b})$ of norm $N_{\QQ(\sqrt{a})/\QQ }(\beta)\in b \cdot {\QQ^*}^2$ and $N_{\QQ(\sqrt{b})/\QQ }(\alpha)\in a \cdot {\QQ^*}^2$, we have 
    \[
    M= L(\sqrt{\beta}) = L(\sqrt{\alpha})\,.
    \]
    Given $M = L(\sqrt{\beta})$ every quadratic extension of $L$ that is dihedral over $\QQ$ is
 of the form $M_t = L(\sqrt{t\beta})$ for some unique $t \in \QQ^* /\langle a, b, {\QQ^*}^2 \rangle$. 
\end{lemma}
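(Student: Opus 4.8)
The plan is to read the statement as the solution of a central embedding problem. Since $\langle r^2\rangle$ is the center of $D_4$ with $D_4/\langle r^2\rangle\cong(\mathbb{Z}/2)^2$, a quadratic extension $L\subset M$ with $M/\mathbb{Q}$ Galois of degree $8$ surjecting onto $\operatorname{Gal}(L/\mathbb{Q})$ is precisely a solution of the embedding problem $1\to\mathbb{Z}/2\to D_4\to(\mathbb{Z}/2)^2\to 1$ along the given surjection $\Gamma\twoheadrightarrow\operatorname{Gal}(L/\mathbb{Q})$. First I would pin down which class in $H^2((\mathbb{Z}/2)^2,\mathbb{Z}/2)$ this represents, building in the normalization ``cyclic over $\mathbb{Q}(\sqrt{ab})$'': with $\chi_a,\chi_b$ the quadratic characters of $\mathbb{Q}(\sqrt a),\mathbb{Q}(\sqrt b)$, the extension class must restrict nontrivially to the order-$2$ subgroup of $\operatorname{Gal}(L/\mathbb{Q})$ fixing $\mathbb{Q}(\sqrt{ab})$ (this is where the order-$4$ element of $D_4$ sits, so its preimage is non-split) and trivially to the two subgroups fixing $\mathbb{Q}(\sqrt a)$ and $\mathbb{Q}(\sqrt b)$ (the reflections have order $2$); expanding in the basis $\{\chi_a\cup\chi_a,\ \chi_a\cup\chi_b,\ \chi_b\cup\chi_b\}$ and imposing the three conditions forces the class to be $\chi_a\cup\chi_b$. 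Hence the obstruction to the embedding problem is the image of $\chi_a\cup\chi_b$ in $H^2(\mathbb{Q},\mathbb{Z}/2)=\operatorname{Br}(\mathbb{Q})[2]$, i.e.\ the class of the quaternion algebra $(a,b)_{\mathbb{Q}}$; it vanishes if and only if $(a,b)_{\mathbb{Q}}$ splits, equivalently \eqref{eq:conic} has a nonzero rational point. This gives the ``if and only if''.

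For the explicit description of a solution I would argue directly with Kummer theory. Given a nonzero point $(x,y,z)$ on \eqref{eq:conic}, set $\beta=x+y\sqrt a\in\mathbb{Q}(\sqrt a)^*$, so that $N_{\mathbb{Q}(\sqrt a)/\mathbb{Q}}(\beta)=x^2-ay^2=bz^2\in b\,\mathbb{Q}^{*2}$. Since $a,b$ are not squares and are distinct modulo squares, this norm relation forces $\beta\notin L^{*2}$, so $M:=L(\sqrt\beta)$ has degree $8$ over $\mathbb{Q}$. It is Galois over $\mathbb{Q}$: the element $\rho\in\operatorname{Gal}(L/\mathbb{Q})$ negating both $\sqrt a$ and $\sqrt b$ satisfies $\rho(\beta)\,\beta=N_{\mathbb{Q}(\sqrt a)/\mathbb{Q}}(\beta)=bz^2$, whence $\sqrt{\rho(\beta)}=z\sqrt b/\sqrt\beta\in M$, while the other generator of $\operatorname{Gal}(L/\mathbb{Q})$, the one fixing $\sqrt a$, fixes $\beta$; so $M$ is stable under $\operatorname{Gal}(L/\mathbb{Q})$. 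To identify $\operatorname{Gal}(M/\mathbb{Q})\cong D_4$: lifting $\rho$ to $\tilde\rho$ one computes $\tilde\rho^2(\sqrt\beta)=-\sqrt\beta$, so $\tilde\rho$ has order $4$; lifting the reflection fixing $\sqrt a$ to $\tilde s$ one gets $\tilde s^2=1$ but $\tilde\rho\tilde s(\sqrt\beta)=-\tilde s\tilde\rho(\sqrt\beta)$, so the group is non-abelian, hence $\cong D_4$ (excluding $Q_8$ and $\mathbb{Z}/4\times\mathbb{Z}/2$), and $\langle\tilde\rho\rangle$ is the cyclic subgroup of order $4$, whose fixed field is the quadratic subfield of $L$ fixed by $\rho$, namely $\mathbb{Q}(\sqrt{ab})$. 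Interchanging the roles of $(a,\sqrt a,y)$ and $(b,\sqrt b,z)$ in the same computation gives $\alpha=2(x+z\sqrt b)$ with $N_{\mathbb{Q}(\sqrt b)/\mathbb{Q}}(\alpha)=4(x^2-bz^2)=a(2y)^2\in a\,\mathbb{Q}^{*2}$ and $M=L(\sqrt\alpha)$.

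Finally, for the parametrization: by Kummer theory quadratic extensions of $L$ correspond to nontrivial classes in $L^*/L^{*2}$, and such an extension is Galois over $\mathbb{Q}$ exactly when its class is $\operatorname{Gal}(L/\mathbb{Q})$-stable. The set of solutions of the embedding problem above is a torsor under $H^1(\mathbb{Q},\mathbb{Z}/2)=\mathbb{Q}^*/\mathbb{Q}^{*2}$ acting through the central $\mathbb{Z}/2$, and on Kummer generators this action is $\beta\mapsto t\beta$; so every quadratic extension of $L$ dihedral over $\mathbb{Q}$ (with the same cyclic subfield $\mathbb{Q}(\sqrt{ab})$, a property preserved under this twisting) equals $M_t=L(\sqrt{t\beta})$ for some $t\in\mathbb{Q}^*$. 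Two parameters $t,t'$ give the same field iff $t/t'\in L^{*2}\cap\mathbb{Q}^*$, and $L^{*2}\cap\mathbb{Q}^*=\langle a,b,\mathbb{Q}^{*2}\rangle$, since a rational number is a square in $L$ precisely when it lies in $\{1,a,b,ab\}$ modulo $\mathbb{Q}^{*2}$; hence $t$ is well defined in $\mathbb{Q}^*/\langle a,b,\mathbb{Q}^{*2}\rangle$. The step I expect to require the most care is the second one above: carrying out the sign bookkeeping in the Kummer computations of $\tilde\rho^2$ and of the commutator precisely enough to rule out $Q_8$ and $\mathbb{Z}/4\times\mathbb{Z}/2$, and verifying the degenerate possibilities cannot occur (above all $\beta\notin L^{*2}$, which is exactly where the norm hypotheses and the non-squareness of $a,b$ are used); identifying the cup-product obstruction with the quaternion algebra $(a,b)$ and hence with the conic is routine but also wants care.
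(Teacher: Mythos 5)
The paper offers no proof of this statement: it is imported verbatim as Corollary 5.2 of Stevenhagen, so there is nothing internal to compare against. Judged on its own, your embedding-problem argument is the standard route and is essentially correct: the identification of the extension class of $1\to\langle r^2\rangle\to D_4\to(\ZZ/2)^2\to 1$ with $\chi_a\cup\chi_b$ (via the three restriction conditions encoding which index-two subgroups of $D_4$ split), the identification of the obstruction with the quaternion class $(a,b)$ and hence with solubility of \eqref{eq:conic}, the Kummer computation showing $\tilde\rho^2(\sqrt\beta)=-\sqrt\beta$ and $\tilde\rho\tilde s=-\tilde s\tilde\rho$ on $\sqrt\beta$ (ruling out the abelian groups and, via the two distinct involutions $\tilde s$ and $\tilde\rho^2$, the group $Q_8$), and the torsor argument with $\QQ^*\cap L^{*2}=\langle a,b,\QQ^{*2}\rangle$ are all sound. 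You also correctly note that every lift is automatically surjective since no proper subgroup of $D_4$ surjects onto $D_4/Z$, and your insertion of ``with the same cyclic subfield $\QQ(\sqrt{ab})$'' in the final parametrization is the right reading (and matches how the paper uses the lemma in \Cref{lemma:reformulation_t_twist}); without it the last sentence would be false, as the dihedral extensions of $L$ cyclic over $\QQ(\sqrt a)$ or $\QQ(\sqrt b)$ form different torsors.

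The one genuine gap is the claim $M=L(\sqrt\beta)=L(\sqrt\alpha)$. ``Interchanging the roles'' of $a$ and $b$ only shows that $L(\sqrt\alpha)$ is \emph{some} $D_4$-extension of $\QQ$ cyclic over $\QQ(\sqrt{ab})$; it does not show it is the \emph{same} field as $L(\sqrt\beta)$, and indeed two such fields differ by a twist in general. What is needed is that $\alpha\beta\in L^{*2}$, which follows from the identity
\begin{equation*}
\bigl(x+y\sqrt a+z\sqrt b\bigr)^2 \;=\; 2x^2+2xy\sqrt a+2xz\sqrt b+2yz\sqrt{ab} \;=\; 2\,(x+y\sqrt a)(x+z\sqrt b)\;=\;\alpha\beta,
\end{equation*}
using $x^2=ay^2+bz^2$. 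This is also the only reason for the factor $2$ in $\alpha=2(x+z\sqrt b)$, which your sketch does not account for. With that identity supplied (together with your already-flagged verification that $\beta\notin L^{*2}$, which does follow from $N_{\QQ(\sqrt a)/\QQ}(\beta)\in b\,\QQ^{*2}$ with $b\notin\QQ^{*2}$), the proof is complete.
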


In \cite[Corollary $7.4$]{stevenhagen2021redeireciprocitygoverningfields}, it is shown that every primitive integral solution of \eqref{eq:conic} yields an extension that is unramified at all odd primes and ramification over $2$ can be avoided by an appropriate twist $t\in \{\pm1,\pm2\}$. This guarantees that we can choose the quadratic extension $M = L(\sqrt{\beta})$ with no further  ramification in $M/L$. We then let $t \in \mathbb{Q}^*/\langle a,b, \mathbb{Q}^{*2}\rangle$ parametrize all the other possible extensions. 

A biquadratic extension is uniquely parametrized by three pairwise coprime square-free integers $m_1, m_2, m_3$ with $m_1>0$. In the notation of the previous \Cref{stevenhagen_d4}, we have $a=m_1m_2, b=m_1m_3, \gcd(a, b)=m_1$. 

\begin{lemma} \label{lemma:reformulation_t_twist}
    Let $L=\QQ(\sqrt{m_1m_2}, \sqrt{m_1m_3})$. Let $m_1', m_2', m_3'$ be the positive, odd part of $m_1, m_2, m_3$ respectively. Let $Y\in \mathbb{R}_{>0}$. Then there are \[
    \tau(m_1'm_2'm_3') \#\{\tilde{t} \in \ZZ_{>0}: \mu^2(2\tilde{t}m_1'm_2'm_3') = 1 , \tilde{t} \leq Y\}
    \]
    non-isomorphic $D_4$-extensions $M/L$, cyclic over $\QQ(\sqrt{m_2m_3})$ and with $\inv_4(M)\leq Y$.
\end{lemma}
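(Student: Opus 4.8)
The plan is to count the $D_4$-extensions $M/L$ lying over the fixed biquadratic $L = \QQ(\sqrt{m_1m_2},\sqrt{m_1m_3})$ by combining the parametrization of \Cref{stevenhagen_d4} with a careful bookkeeping of the fourth invariant $\inv_4(M)$. By \Cref{stevenhagen_d4}, once we know one such extension $M = L(\sqrt\beta)$ (which exists because the relevant conic has a rational point — in fact the norm conditions are automatic here since $a = m_1m_2$, $b = m_1m_3$ with $\gcd(a,b) = m_1$ — and we may pick $\beta$ coming from a primitive integral solution, so that $M/L$ is unramified at every odd prime and, after a twist $t \in \{\pm 1, \pm 2\}$, unramified at $2$ as well), every $D_4$-extension of $L$ that is cyclic over $\QQ(\sqrt{m_2m_3})$ is of the form $M_t = L(\sqrt{t\beta})$ for a unique class $t \in \QQ^*/\langle m_1m_2, m_1m_3, \QQ^{*2}\rangle$. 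So the counting reduces to enumerating twist classes $t$ subject to $\inv_4(M_t) \leq Y$.

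First I would fix a set of representatives for $\QQ^*/\langle a,b,\QQ^{*2}\rangle$: since the quotient group structure is understood, a representative can be taken of the form $\pm(\text{squarefree part of a product of primes not dividing } a b/\gcd)$ — more precisely the odd primes dividing $t$ are exactly the odd primes that newly ramify in $M_t/\QQ$ but not in $L/\QQ$, which by the description in \eqref{eq:def_invaraiants_subfields} is precisely $\inv_4(M_t)$. Thus I would show $\inv_4(M_t)$ equals the positive odd squarefree part $\tilde t$ of $t$, so the count over $t$ with $\inv_4 \leq Y$ splits as a sum over squarefree $\tilde t \leq Y$ coprime to $2 m_1'm_2'm_3'$, times the number of ways a given $\tilde t$ is realized as the odd part of a twist class. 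The latter multiplicity accounts for (i) the sign/power-of-$2$ ambiguity in the twist (the $\{\pm1,\pm2\}$ freedom), and (ii) the fact that the quotient by $\langle a,b\rangle$ identifies $t$ with $t \cdot m_1m_2$, $t\cdot m_1m_3$, $t \cdot m_2 m_3$ up to squares, which produces exactly $\tau(m_1'm_2'm_3')$ distinct lifts — this divisor-function factor is where the $\tau(m_1'm_2'm_3')$ in the statement comes from. Finally I would confirm that distinct admissible classes $t$ give non-isomorphic fields $M_t$ (uniqueness part of \Cref{stevenhagen_d4}), so no overcounting occurs.

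I expect the main obstacle to be the precise combinatorial/number-theoretic identification of how many twist classes $t \in \QQ^*/\langle a,b,\QQ^{*2}\rangle$ have a prescribed odd radical $\tilde t$, and in particular pinning down that this multiplicity is exactly $\tau(m_1'm_2'm_3')$ rather than some coarser power of $2$; this requires understanding the index and explicit structure of $\langle m_1m_2, m_1m_3, \QQ^{*2}\rangle$ inside $\QQ^*/\QQ^{*2}$ at each odd prime dividing $m_1m_2m_3$ and separately controlling the $2$-adic and sign places, which is exactly the role of the condition $\mu^2(2\tilde t m_1'm_2'm_3') = 1$ (squarefreeness and pairwise coprimality of $\tilde t$ with $2m_1'm_2'm_3'$). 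A secondary technical point is checking that twisting by an odd prime $p \nmid m_1m_2m_3$ indeed adds $p$ to $\inv_4$ and to nothing else — i.e. that such a $p$ ramifies in $M_t$ with inertia of type $\langle r^2\rangle$ — which follows from \Cref{tab:splitting} together with the unramified-at-odd-primes property of the base extension $M/L$, but must be spelled out. Once these identifications are in place, the formula $\tau(m_1'm_2'm_3') \cdot \#\{\tilde t > 0 : \mu^2(2\tilde t m_1'm_2'm_3') = 1,\ \tilde t \leq Y\}$ drops out immediately.
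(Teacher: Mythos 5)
Your proposal follows essentially the same route as the paper: parametrize all $D_4$-extensions over $L$ cyclic over $\QQ(\sqrt{m_2m_3})$ via the twists $M_t$ of \Cref{stevenhagen_d4}, identify $\inv_4(M_t)$ with the positive odd part of $t$ coprime to $m_1m_2m_3$ using \eqref{eq:def_invaraiants_subfields}, and then count twist classes with a prescribed odd radical $\tilde{t}$. The one step you flag as the "main obstacle" — that the multiplicity is exactly $\tau(m_1'm_2'm_3')$ — is handled in the paper by the explicit decomposition $\QQ^*/\langle m_1m_2, m_1m_3, \QQ^{*2}\rangle \simeq \bigoplus_{p\nmid 2m_1'm_2'm_3'}\langle p\rangle/\langle p^2\rangle \oplus \{\pm1\} \oplus \bigoplus_{p\mid 2m_1m_2m_3}\langle p\rangle/\langle p^2\rangle\big/\langle m_1m_2,m_1m_3\rangle$, whose last two summands have order $2\cdot\tau(2m_1'm_2'm_3')/4 = \tau(m_1'm_2'm_3')$, exactly the bookkeeping you describe.
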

\begin{proof}
    By \Cref{stevenhagen_d4} every dihedral extension of $\QQ$ that lies over $L=\QQ(\sqrt{m_1m_2}, \sqrt{m_1m_3})$ and is cyclic over $\QQ(\sqrt{m_2m_3})$ is of the form $L(\sqrt{t\beta})$ for some $t\in \QQ^*/\langle m_1m_2,m_1m_3, \mathbb{Q}^{*2}\rangle$ and $\beta$ as in \Cref{stevenhagen_d4}, so that $L(\sqrt{\beta})/L$ is dihedral over $\QQ$, cyclic over $\QQ(\sqrt{m_2m_3})$ and $L(\sqrt{\beta})/L$ is unramified.\\ 
    We may take $t\in \QQ^*/\langle m_1m_2,m_1m_3, \mathbb{Q}^{*2}\rangle$ to be a square-free integer. We claim that $\inv_4 (M) = \left|\frac{t}{\gcd(t, 2m_1m_2m_3)}\right|$. Indeed, for a prime $p \mid t$ with $p \nmid 2m_1m_2m_3$ we have that $p$ is ramified in $M = L(\sqrt{t\beta})$ but not ramified in $L$, hence by \eqref{eq:def_invaraiants_subfields} $p\mid \inv_4(M)$. For a prime $p \mid t $ with $p\mid 2m_1m_2m_3$ we have either $p = 2$, in which case $p \nmid \inv_4 (M)$ (as our invariants only take odd primes into account) or $p$ is odd and $p\mid m_1m_2m_3$, in which case $p$ ramifies in $L$ and hence by \eqref{eq:def_invaraiants_subfields} $p \nmid \inv_4 (M)$.\\   
    We now rewrite the quotient $\QQ^*/\langle m_1m_2,m_1m_3, \mathbb{Q}^{*2}\rangle$. We have 
    \[
     \QQ^*/\langle m_1m_2,m_1m_3, \mathbb{Q}^{*2}\rangle \simeq \bigoplus_{p\nmid 2m_1'm_2'm_3'} \langle p\rangle/\langle p^2\rangle \,\oplus \{\pm 1\}\, \oplus \bigoplus_{p\mid 2m_1m_2m_3} \langle p\rangle/\langle p^2\rangle \Big/ \langle m_1m_2, m_1m_3\rangle 
    \]
    We identify the first summand $ \bigoplus_{p\nmid 2m_1'm_2'm_3'}\langle p\rangle/\langle p^2\rangle $ with the set of $\tilde{t}$ square-free and coprime to $2m_1'm_2'm_3'$.  By the above claim $\inv_4 (M) = \tilde{t}$. For a fixed $\tilde{t}$, the latter summands amount to $2\cdot \tau(2m_1'm_2'm_3') /4 = \tau(m_1'm_2',m_3')$ distinct twists which will parametrize distinct $D_4$-octics.  In total, there are \[
    \tau(m_1'm_2'm_3') \#\{\tilde{t}\in \ZZ_{>0}: \mu^2(2\tilde{t}m_1'm_2'm_3') = 1, \tilde{t}\leq Y\}
    \]
    extensions with fourth invariant bounded by $Y$. 
\end{proof}

\begin{remark} \label{rmk:count_everything_four_time}
    For each $D_4$ octic $M$ we have 8 pairs $(M, \sigma)$ with $\sigma: \operatorname{Gal}(M/\QQ)\overset{\simeq}{\to} D_4$. For each isomorphism $\sigma$ the fixed field of $\langle r\rangle$ (under the identification $\operatorname{Gal}(M/\QQ)\overset{\sigma}{\simeq} D_4$) is the unique quadratic $K=\QQ\left(\sqrt{m_2m_3}\right)$. For pairs $(M, \sigma_1)$, $(M, \sigma_2)$ where $\sigma_1, \sigma_2$ differ by conjugation the quadratic subfields that arise as the fixed fields of $\langle s, r^2 \rangle$ and $\langle rs, r^2\rangle$ are the same. If $\sigma_1$ and $ \sigma_2$ differ by an outer automorphism, the fixed fields of $\langle s, r^2 \rangle$ and $\langle rs, r^2\rangle$ as quadratic subfields are interchanged. We will fix the ordering of the subfields of the biquadratic. Thus, every field that lies above the biquadratic is counted $4$ times.
\end{remark}

Let $\bm{X}=(X_1,X_2,X_3,X_4)\in \mathbb{R}_{>0}^4$, we are interested in the quantity \begin{align}\mathfrak{M}(\bm{X})=\# \{ (M , \sigma): M/\QQ \text{ Galois extension, } \operatorname{Gal}(M/ \QQ)\overset{\sigma}{\simeq}D_4, \ \operatorname{inv}_i M \leq  X_i \}. \label{eq:first_def_of_M}\end{align}
Counting $D_4$-extensions of degree 8 reduces to counting specific quadratic extensions of certain biquadratic fields. We will make this precise in the following.

\begin{lemma}\label{lemma:M_reformulated_first}
Let $m_i'$ denote the positive, odd part of an integer $m_i$. Let $\mathfrak{M}(\mathbf{X})$ be given by \eqref{eq:first_def_of_M}. Then 
\begin{align*}
\mathfrak{M}(\boldsymbol{X})= \sum_{\substack{ m_1, m_2, m_3 \\m_1>0 \\ m_1'\leq X_1, m_2'\leq X_2, m_3'\leq X_3  \\ \mu^2(m_1m_2m_3)=1 \\
\eqref{eq:conic_equation} \text{ has $\QQ$ solution}}}
\#\left\{
(M, \sigma):
\begin{aligned}
&\operatorname{Gal}(M/\mathbb{Q}) \overset{\sigma}{\simeq} D_4, \quad
M/\mathbb{Q}(\sqrt{m_2 m_3}) \simeq C_4, \\
&M^{\langle s, r^2\rangle} = \mathbb{Q}(\sqrt{m_1 m_2}), \quad
M^{\langle rs, r^2\rangle} = \mathbb{Q}(\sqrt{m_1 m_3}), \\
&\operatorname{inv}_4 (M, \sigma) \leq X_4
\end{aligned}
\right\} .
\end{align*}
Where $M^H$ denotes the fixed field of $M$ under the preimage $\sigma^{-1}(H)$ for a subgroup of $H\subseteq D_4$.
\end{lemma}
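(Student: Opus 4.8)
\noindent\emph{Proof sketch.} The plan is to reorganize the count in \eqref{eq:first_def_of_M} by recording, for each pair $(M,\sigma)$, the biquadratic subfield of $M$ together with an ordering of two of its quadratic subfields, and then to match this data against \Cref{stevenhagen_d4} and the dictionary of \Cref{seq:multi_invariants}. Concretely, to a pair $(M,\sigma)$ counted in \eqref{eq:first_def_of_M} I would attach the fields $K_1:=M^{\langle s,r^2\rangle}$, $K_2:=M^{\langle rs,r^2\rangle}$, $L:=M^{\langle r^2\rangle}$ and $K:=M^{\langle r\rangle}$, the fixed fields of the $\sigma$-preimages of these subgroups, as in \Cref{fig:subfields}. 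Since $\langle s,r^2\rangle\cap\langle rs,r^2\rangle=\langle r^2\rangle$, the field $L=K_1K_2$ is the biquadratic quartic inside $M$, and $K_1,K_2,K$ are its three quadratic subfields. The key structural point is that $\langle r\rangle$ and $\langle r^2\rangle$ are characteristic in $D_4$, so that $L$ and $K$ depend on $M$ alone, whereas the ordered pair $(K_1,K_2)$ genuinely depends on $\sigma$: the outer automorphism $(r\mapsto r,\ s\mapsto rs)$ interchanges $\langle s,r^2\rangle$ and $\langle rs,r^2\rangle$, so both orderings of the two non-cyclic quadratic subfields of $L$ occur among the pairs lying over a fixed $M$, in agreement with \Cref{rmk:count_everything_four_time}. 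Hence $(M,\sigma)\mapsto(K_1,K_2)$ is well defined and partitions the set in \eqref{eq:first_def_of_M} according to $(K_1,K_2)$.

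Next I would invoke the standard parametrization of biquadratic fields: ordered pairs $(K_1,K_2)$ of distinct nontrivial quadratic fields correspond bijectively to triples $(m_1,m_2,m_3)$ of pairwise coprime square-free integers with $m_1>0$, via $K_1=\QQ(\sqrt{m_1m_2})$ and $K_2=\QQ(\sqrt{m_1m_3})$; then $m_1$ is the positive common part, the remaining quadratic subfield is $K=\QQ(\sqrt{m_2m_3})$, and ``pairwise coprime and square-free'' is precisely $\mu^2(m_1m_2m_3)=1$. Degenerate triples --- those for which some $m_im_j$ is a perfect square, so that a quadratic field collapses to $\QQ$ --- make the corresponding inner count empty and may therefore be retained in the sum without changing its value. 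This already rewrites \eqref{eq:first_def_of_M} as $\sum_{(m_1,m_2,m_3)}N(m_1,m_2,m_3)$, where $N(m_1,m_2,m_3)$ counts the pairs $(M,\sigma)$ with $\operatorname{Gal}(M/\QQ)\overset{\sigma}{\simeq}D_4$, $M^{\langle s,r^2\rangle}=\QQ(\sqrt{m_1m_2})$, $M^{\langle rs,r^2\rangle}=\QQ(\sqrt{m_1m_3})$ and $\inv_i(M)\le X_i$ for all $i$, and the sum runs over the triples above.

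It then remains to distribute the four invariant bounds and to insert the solubility condition into the outer range. By \eqref{eq:def_invaraiants_subfields} and the computation following it, the invariants $\inv_1,\inv_2,\inv_3$ are, up to reindexing, the positive odd parts $m_1',m_2',m_3'$; in particular they depend only on the outer index, so the conditions $\inv_i(M)\le X_i$ for $i=1,2,3$ pass outside $N(m_1,m_2,m_3)$ as the displayed constraints $m_1'\le X_1$, $m_2'\le X_2$, $m_3'\le X_3$, whereas $\inv_4(M)\le X_4$ depends on which odd primes ramify in $M$ but not in $L$, hence on $M$ itself, and so stays inside $N$. Finally, a triple yields a nonzero $N(m_1,m_2,m_3)$ only if $L=\QQ(\sqrt{m_1m_2},\sqrt{m_1m_3})$ sits inside some octic that is $D_4$ over $\QQ$; by \Cref{stevenhagen_d4}, applied with $a=m_1m_2$, $b=m_1m_3$ and $\gcd(a,b)=m_1$, this is equivalent to the conic \eqref{eq:conic} possessing a nonzero rational point, in which case every such $M$ is automatically $C_4$ over $K=\QQ(\sqrt{m_2m_3})$ because $\operatorname{Gal}(M/M^{\langle r\rangle})=\langle r\rangle\cong C_4$ --- exactly the condition $M/\QQ(\sqrt{m_2m_3})\simeq C_4$ recorded inside $N$, which also pins down the role of $K$ and hence of $\sigma$ beyond the ordered-pair data. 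Restricting the outer sum to triples for which \eqref{eq:conic} is soluble then produces precisely the asserted identity.

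The only genuinely delicate point is the partitioning step: verifying that $(M,\sigma)\mapsto(K_1,K_2)$ sorts the pairs in \eqref{eq:first_def_of_M} with neither loss nor repetition. This is where I would be most careful, and it rests entirely on $\langle r\rangle$ and $\langle r^2\rangle$ being characteristic in $D_4$ together with the outer automorphism interchanging $\langle s,r^2\rangle$ and $\langle rs,r^2\rangle$, as in \Cref{rmk:count_everything_four_time}. Everything else amounts to routine reindexing once the dictionaries of \Cref{seq:multi_invariants} and \Cref{stevenhagen_d4} are in hand.
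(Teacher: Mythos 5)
Your proposal is correct and follows essentially the same route as the paper: you fibre the set of pairs $(M,\sigma)$ over the ordered pair of quadratic subfields $\bigl(M^{\langle s,r^2\rangle},M^{\langle rs,r^2\rangle}\bigr)$, identify these ordered pairs with triples $(m_1,m_2,m_3)$, and invoke \Cref{stevenhagen_d4} for the conic condition — which is exactly the paper's decomposition, phrased there instead via pairs $(L,\tau)$ with the compatibility $\pi\circ\sigma=\tau_{|_L}$. Your handling of the degenerate triples and of the redundancy of the condition $M/\QQ(\sqrt{m_2m_3})\simeq C_4$ is sound, and the ``up to reindexing'' caveat on $\inv_1=m_2'$, $\inv_2=m_3'$, $\inv_3=m_1'$ versus the displayed bounds $m_i'\leq X_i$ matches an index permutation already present in the paper's own statement, which is harmless for the symmetric main term.
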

\begin{proof}
    Let $L=\QQ(\sqrt{m_1m_2}, \sqrt{m_1m_3})$ be a biquadratic, which is uniquely determined by three pairwise coprime integers $m_1, m_2, m_3$ with $m_1>0$. This determines the quadratic subfields $\QQ(\sqrt{m_1m_2}), \QQ(\sqrt{m_1m_3})$ and $\QQ(\sqrt{m_2m_3})$ of the biquadratic. \\
By \Cref{stevenhagen_d4} biquadratic field $L = \QQ(\sqrt{m_1m_2}, \sqrt{m_1m_3})$ admits a $D_4$-extension over $\QQ$, cyclic over $\QQ(\sqrt{m_2m_3})$, if and only if a rational solution to the conic equation 
\begin{align} \label{eq:conic_equation}
x^2-m_1m_2y^2-m_1m_3z^2 = 0\,.    
\end{align}
exists. Again, we have to keep track of the isomorphisms $\tau : \operatorname{Gal}(L/\QQ) \to \ZZ/2\ZZ \times \ZZ/2\ZZ$. Then $\mathfrak{M}(\mathbf{X})$ counts pairs $(M, \sigma)$ which lie over a biquadratic $(L, \tau)$ such that $\pi \circ \sigma = \tau_{|_L}$. Here $\pi : D_4 \to \ZZ /2\ZZ \times \ZZ/2\ZZ$ is given by the map $s \mapsto (1,0) , rs\mapsto(0,1), r\mapsto(1,1)$ and $\tau_{|L}$ denotes the restriction map $\operatorname{Gal}(M/\QQ) \to \operatorname{Gal}(L/\QQ)$ given by restricting an element of $\operatorname{Gal}(M/\QQ)$ to $L$. \\
For a biquadratic $(L,\tau)$ that admits a $D_4$-extension in the sense of \Cref{stevenhagen_d4}, i.e. \\
$\operatorname{Gal}(M/\QQ(\sqrt{m_2 m_3})) \simeq \langle r \rangle \subseteq D_4$ cyclic, we have that $\tau$ maps the $L$-automorphism \\
$(\sqrt{m_1m_2}, \sqrt{m_1m_3}) \mapsto (-\sqrt{m_1m_2}, -\sqrt{m_1m_3})$ to the image of $r$ under $\pi$, i.e. $(1,1)$. Then, there are two possibilities for $\tau$: 
\begin{enumerate}
    \item[(1)] $\tau((\sqrt{m_1m_2}, \sqrt{m_1m_3}) \mapsto (-\sqrt{m_1m_2}, \sqrt{m_1m_3})) =(1,0)$\,,
    \item[(2)] $\tau((\sqrt{m_1m_2}, \sqrt{m_1m_3}) \mapsto (\sqrt{m_1m_2}, -\sqrt{m_1m_3}))=(0,1)$\,.
\end{enumerate}
The first option corresponds to counting $(M, \sigma)$ such that the fixed field of $\langle r, s^2\rangle \subset D_4$ is $\QQ(\sqrt{m_1m_2})$ and the fixed field of $\langle rs, r^2\rangle$ is $\QQ(\sqrt{m_1m_3})$; similarly, the second candidate for $\tau$ corresponds to counting $(M, \sigma)$ such that the fixed field of $\langle r, s^2\rangle \subset D_4$ is $\QQ(\sqrt{m_1m_3})$ and the fixed field of $\langle rs, r^2\rangle$ is $\QQ(\sqrt{m_1m_2})$. 

We have thus proved that  
\begin{align*}
\mathfrak{M}(\mathbf{X} ) &= \sum_{\substack{(L, \tau) \\  L = \QQ(\sqrt{m_1m_2}, \sqrt{m_1m_3}) \\ \tau: \Gal(L/\QQ) \overset{\simeq}{\to} \ZZ/2\ZZ\times \ZZ/2\ZZ}} \# \left\{
(M, \sigma) :\;
\begin{aligned}
&[M:\mathbb{Q}] = 8,\quad 
\operatorname{Gal}(M/\mathbb{Q}) \overset{\sigma}{\simeq} D_4, \\
&\operatorname{Gal}(M/\mathbb{Q}(\sqrt{m_2m_3})) \simeq C_4, \\
&\operatorname{inv}_i(M, \sigma) \leq X_i,\quad 
\pi \circ \sigma = \tau_{|_L}
\end{aligned} \right\}.
\end{align*}

 Counting the number of biquadratic extensions $(L, \tau)$ with isomorphism $\tau:\operatorname{Gal}(L/\QQ)\to \ZZ/2\ZZ \times \ZZ/2\ZZ$ $L=\QQ(\sqrt{m_1m_2}, \sqrt{m_1m_3})$ that admit a $D_4$-extension amounts to counting the number of pairwise co-prime, square-free $(m_1, m_2, m_3)\in \ZZ^3$, $m_1>0$, such that there exists a rational (non-zero) solution to \eqref{eq:conic_equation}.
For each admissible $(L, \tau)$, we will count the number of twists parameterizing the quadratic extensions $M=L(\sqrt{t\beta})$ together with $\sigma : \operatorname{Gal}(M/\QQ) \to D_4$ such that $\pi\circ \sigma = \tau_{|_L}$ and $\inv_i(M, \sigma) \leq X_i$. 
Denote $ K_1 = \QQ(\sqrt{m_1 m_2})$, $K = \QQ(\sqrt{m_2 m_3}),$ $K_2 = \QQ(\sqrt{m_1m_3}),$ to be the fixed fields of $\langle s, r^2\rangle, \langle r\rangle, \langle rs, r^2\rangle$ (under the identification $\sigma$) respectively. Then $\inv_1 (M, \sigma) = m_2', \inv_2 (M, \sigma) = m_3', \inv_3 (M, \sigma) = m_1'$ where $m_i'$ denotes the positive, odd part of $m_i$.  \\
Hence we have 
\begin{align*}
\mathfrak{M}(\boldsymbol{X})= \sum_{\substack{ m_1, m_2, m_3 \\m_1>0 \\ m_1'\leq X_1, m_2'\leq X_2, m_3'\leq X_3  \\ \mu^2(m_1m_2m_3)=1 \\
\eqref{eq:conic_equation} \text{ has $\QQ$ solution}}}
\#\left\{
(M, \sigma):
\begin{aligned}
&\operatorname{Gal}(M/\mathbb{Q}) \overset{\sigma}{\simeq} D_4, \quad
M/\mathbb{Q}(\sqrt{m_2 m_3}) \simeq C_4, \\
&M^{\langle s, r^2\rangle} = \mathbb{Q}(\sqrt{m_1 m_2}), \quad
M^{\langle rs, r^2\rangle} = \mathbb{Q}(\sqrt{m_1 m_3}), \\
&\operatorname{inv}_4 (M, \sigma) \leq X_4
\end{aligned}
\right\} .
\end{align*}
\end{proof}
Last, we combine the previous two lemmas in the following. 
\begin{lemma} \label{lemma:M_reformulated_second}
    Let $\mathfrak{M}(\mathbf X)$ be given by \eqref{eq:first_def_of_M}. Let $m_i'$ denote the positive, odd part of an integer $m_i$. Then
    \[
     \mathfrak{M}(\mathbf X) = 4\cdot\sum_{\substack{ m_1, m_2, m_3 \in \ZZ \\ m_1 >0 \\ m_1'\leq X_1, m_2'\leq X_2, m_3' \leq X_3\\ \mu^2(m_1m_2m_3)=1 \\
\eqref{eq:conic_equation} \text{ has $\QQ$ solution}}} \tau(m_1'm_2'm_3')\cdot \#\{ \tilde{t} \in \ZZ_{>0}: \mu^2(2m_1'm_2'm_3'\tilde{t}) = 1: \tilde{t} \leq X_4\}
    \]
\end{lemma}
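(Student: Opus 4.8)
The plan is to assemble the two preceding lemmas. \Cref{lemma:M_reformulated_first} already expresses $\mathfrak M(\mathbf X)$ as a sum, over pairwise coprime squarefree triples $(m_1,m_2,m_3)$ with $m_1>0$, $m_i'\le X_i$ and with \eqref{eq:conic_equation} solvable over $\QQ$, of the number of pairs $(M,\sigma)$ with $\operatorname{Gal}(M/\QQ)\overset{\sigma}{\simeq}D_4$, $M/\QQ(\sqrt{m_2m_3})\simeq C_4$, fixed fields $M^{\langle s,r^2\rangle}=\QQ(\sqrt{m_1m_2})$ and $M^{\langle rs,r^2\rangle}=\QQ(\sqrt{m_1m_3})$, and $\inv_4(M,\sigma)\le X_4$; while \Cref{lemma:reformulation_t_twist} evaluates the number of non-isomorphic $D_4$-extensions $M/L$ cyclic over $\QQ(\sqrt{m_2m_3})$ with $\inv_4(M)\le Y$. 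The only real work is to show that, for each admissible triple, the inner count of pairs in \Cref{lemma:M_reformulated_first} equals $4$ times the field count of \Cref{lemma:reformulation_t_twist} with $Y=X_4$, after which one substitutes and pulls the factor $4$ outside the sum.

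For the comparison, fix an admissible triple and set $L=\QQ(\sqrt{m_1m_2},\sqrt{m_1m_3})$. I would first observe that the conditions ``$M/\QQ(\sqrt{m_2m_3})\simeq C_4$'' and ``$\inv_4\le X_4$'' in the inner count depend only on the field $M$ and not on $\sigma$: the former because the unique cyclic index-$2$ subgroup of $D_4$ is $\langle r\rangle$, so any admissible $\sigma$ must send $\operatorname{Gal}(M/\QQ(\sqrt{m_2m_3}))$ there, and the latter because $\langle r^2\rangle=Z(D_4)$ is characteristic. Next, given a subfield $M\subset\overline{\QQ}$ with $L\subset M$ that is $D_4$ over $\QQ$ and cyclic over $\QQ(\sqrt{m_2m_3})$, the isomorphisms $\sigma$ satisfying the two fixed-field constraints are exactly $4$ in number: the three index-$2$ subgroups of $D_4$ are normal and $D_4/\langle r^2\rangle$ is the abelianization of $D_4$, so every inner automorphism of $D_4$ fixes each of them, whereas the outer automorphism $(r\mapsto r,\ s\mapsto rs)$ interchanges $\langle s,r^2\rangle$ and $\langle rs,r^2\rangle$ and fixes $\langle r\rangle$; once one admissible $\sigma$ exists, its $\operatorname{Inn}(D_4)$-orbit supplies precisely the four admissible isomorphisms, which is the content of \Cref{rmk:count_everything_four_time}. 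Hence the inner count is $4$ times the number of subfields $M\subset\overline{\QQ}$ containing $L$, $D_4$ over $\QQ$, cyclic over $\QQ(\sqrt{m_2m_3})$, with $\inv_4(M)\le X_4$. Since $M/\QQ$ is Galois, distinct such subfields are pairwise non-isomorphic, so this is precisely the quantity computed by \Cref{lemma:reformulation_t_twist} (its standing hypothesis that $L$ carries such an extension being exactly the solvability of \eqref{eq:conic_equation}, via \Cref{stevenhagen_d4}), equal to $\tau(m_1'm_2'm_3')\,\#\{\tilde t\in\ZZ_{>0}:\mu^2(2\tilde t\,m_1'm_2'm_3')=1,\ \tilde t\le X_4\}$. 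Plugging this into \Cref{lemma:M_reformulated_first} and factoring out the $4$ gives the stated identity.

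The main obstacle is the multiplicity bookkeeping in the middle step: being careful that fixing the ordering of the two non-cyclic quadratic subfields of the biquadratic (equivalently, fixing $\tau$ among its two admissible choices in the proof of \Cref{lemma:M_reformulated_first}) collapses the eight pairs $(M,\sigma)$ attached to a given $M$ down to exactly four, and that no condition in the inner count of \Cref{lemma:M_reformulated_first} is lost in passing from a count of pairs to a count of fields $M$ — for which one uses that $\inv_4$ and the $C_4$-structure over $\QQ(\sqrt{m_2m_3})$ are intrinsic to $M$. Granting the group-theoretic facts just listed, essentially \Cref{rmk:count_everything_four_time}, the remainder is a direct substitution.
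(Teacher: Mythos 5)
Your proposal is correct and follows essentially the same route as the paper: substitute the field count of \Cref{lemma:reformulation_t_twist} (with $Y=X_4$) into the inner count of \Cref{lemma:M_reformulated_first}, after checking that each field $M$ satisfying the fixed-field constraints carries exactly four admissible isomorphisms $\sigma$ (the inner automorphisms preserve the three index-two subgroups, while the outer ones swap $\langle s,r^2\rangle$ and $\langle rs,r^2\rangle$). Your additional remarks — that the $C_4$-structure over $\QQ(\sqrt{m_2m_3})$ and $\inv_4$ are intrinsic to $M$, and that distinct Galois subfields of $\overline{\QQ}$ are non-isomorphic — are points the paper leaves implicit, but they do not change the argument.
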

\begin{proof} Starting from  \Cref{lemma:M_reformulated_first}, we have
    \begin{align*}
\mathfrak{M}(\boldsymbol{X})= \sum_{\substack{ m_1, m_2, m_3 \\ m_1 >0 \\ \mu^2(m_1m_2m_3)=1 \\
\eqref{eq:conic_equation} \text{ has $\QQ$ solution}}} \
\#\left\{
(M, \sigma):
\begin{aligned}
&\operatorname{Gal}(M/\mathbb{Q}) \overset{\sigma}{\simeq} D_4, \quad
M/\mathbb{Q}(\sqrt{m_2 m_3}) \simeq C_4, \\
&M^{\langle s, r^2\rangle} = \mathbb{Q}(\sqrt{m_1 m_2}), \quad
M^{\langle rs, r^2\rangle} = \mathbb{Q}(\sqrt{m_1 m_3}), \\
&\operatorname{inv}_i (M, \sigma) \leq X_i
\end{aligned}
\right\} .
\end{align*}
Let $(m_1, m_2, m_3)\in \ZZ^3$ be a tuple of square-free, pairwise coprime integers such that the conic \eqref{eq:conic_equation} has a rational solution. Let $L=\QQ(\sqrt{m_1m_2}, \sqrt{m_1m_3})$. Let $(M, \sigma)$ be a $D_4$-extension with isomorphism $\sigma: \operatorname{Gal}(M/\QQ)\to D_4$, so that 
\begin{align}
M^{\langle s, r^2\rangle} = \mathbb{Q}(\sqrt{m_1 m_2}), \;M^{\langle rs, r^2\rangle} = \mathbb{Q}(\sqrt{m_1 m_3}), \; M^{\langle r \rangle} = \QQ(\sqrt{m_2m_3}) \,.\label{eq:condition_fixed_fields}
\end{align}
Where $M^H$ denotes the fixed field of $M$ under the preimage $\sigma^{-1}(H)$ for a subgroup of $H\subseteq D_4$.\\ For a $D_4$-field $M$ there are precisely four $\sigma$ with this property. If $\sigma_1, \sigma_2$ differ by an inner automorphism, the fixed fields above are preserved; there are four inner automorphisms. If $\sigma_1, \sigma_2$ differ by an outer automorphism, then the fixed fields of $\langle s, r^2\rangle$ and $\langle rs, r^2\rangle$ are interchanged. \\
For a pair $(M, \sigma)$ with \eqref{eq:condition_fixed_fields} we then have $\inv_1 (M, \sigma) = m_2', \inv_2 (M, \sigma) = m_3', \inv_3 (M, \sigma) = m_1'$. By \Cref{lemma:reformulation_t_twist} we know the number of distinct $M$ over $L$ with $\inv_4(M) \leq X_4$. In particular, given  $(m_1, m_2, m_3)$, there are  \[ 
\tau(m_1'm_2'm_3')\cdot \#\{ \tilde{t} \in \ZZ_{>0}: \mu^2(2m_1'm_2'm_3'\tilde{t}) = 1: \tilde{t} \leq X_4\}\,\]
 $D_4$-extension of $L$ with $\inv_4 M \leq X_4$ and $\inv_3 (M) = m_1'$. For each field $M$ there are $4$ isomorphism $\sigma$ such that \eqref{eq:condition_fixed_fields} holds.
\end{proof}

\section{Counting $D_4$-extensions} \label{sec:counting}
We now evaluate $\mathfrak{M}(\boldsymbol{X})$ starting from \Cref{lemma:M_reformulated_second}, which enables us to relate this count to bounded tuples $(m_1, m_2, m_3)$ such that the conic equation \eqref{eq:conic} has a rational solution and $\tilde{t}$ is co-prime to $2m_1m_2m_3$.
We start by addressing the solubility of \eqref{eq:conic_equation} over the rationals. This is completely settled by studying the equation locally. A necessary and sufficient condition for the solubility of \eqref{eq:conic_equation} over the reals is that not both $a = m_1 m_2$ and $b = m_1 m_3$ are negative. For rational primes we study the Hilbert symbol $(m_1m_2, m_1m_3 )_p$. We collect the information for the solubility at $2$. Write $m_1=2^{\mu}m_1', m_2=2^{\alpha}m_2', m_3=2^{\beta}m_3'$ with $\gcd(m_i',2)=1$ for all $i=1,2,3$ and $\mu,\alpha,\beta\in \{0,1 \}$ with $\mu+\alpha+\beta\leq 1$. The 2-adic Hilbert symbol is then determined by $\mu, \alpha, \beta$ and the residue classes of $m_1',m_2',m_3'$ modulo $8$. Define 
\begin{align*}
    E(0,0,0)&:=\{(\varepsilon_1,\varepsilon_2,\varepsilon_3)\in((\mathbb{Z}/8\mathbb{Z})^{\times})^3:\varepsilon_1\equiv\varepsilon_j\mod 4 \text{ for } j=2 \text{ or } 3\},\\
    E(1,0,0)&:= \{(\varepsilon_1,\varepsilon_2,\varepsilon_3)\in((\mathbb{Z}/8\mathbb{Z})^{\times})^3:\varepsilon_1\equiv\varepsilon_2\mod 4, \varepsilon_2=\varepsilon_3\}\\ &\quad \ \  \cup\ \{(\varepsilon_1,\varepsilon_2,\varepsilon_3)\in((\mathbb{Z}/8\mathbb{Z})^{\times})^3:\varepsilon_1=\varepsilon_i=-\varepsilon_j\} \\ &\quad \ \  \cup\{(\varepsilon_1,\varepsilon_2,\varepsilon_3)\in((\mathbb{Z}/8\mathbb{Z})^{\times})^3:\varepsilon_1\equiv\varepsilon_i\mod 4, \varepsilon_1\equiv-\varepsilon_j\mod 4, \varepsilon_i\neq\varepsilon_1\neq-\varepsilon_j\}  \\ &\quad \ \ \cup \{(\varepsilon_1,\varepsilon_2,\varepsilon_3)\in((\mathbb{Z}/8\mathbb{Z})^{\times})^3:\varepsilon_1\equiv-\varepsilon_i\mod 4, -\varepsilon_j=\varepsilon_1\neq-\varepsilon_i\},\\
    E(0,1,0)&:=\{(\varepsilon_1,\varepsilon_2,\varepsilon_3)\in((\mathbb{Z}/8\mathbb{Z})^{\times})^3: \varepsilon_1=\varepsilon_3\} \\ &\quad \ \  \cup \{ (\varepsilon_1,\varepsilon_2,\varepsilon_3)\in((\mathbb{Z}/8\mathbb{Z})^{\times})^3:\varepsilon_1\equiv-\varepsilon_3\mod 4, \varepsilon_1\neq-\varepsilon_3, \varepsilon_1\equiv-\varepsilon_2\mod 4\}\\ &\quad \ \  \cup \{(\varepsilon_1,\varepsilon_2,\varepsilon_3)\in((\mathbb{Z}/8\mathbb{Z})^{\times})^3:\varepsilon_1\equiv\varepsilon_2\mod 4,  \varepsilon_1=-\varepsilon_3\} ,\\  E(0,1,0)&:=\{(\varepsilon_1,\varepsilon_2,\varepsilon_3)\in((\mathbb{Z}/8\mathbb{Z})^{\times})^3: \varepsilon_1=\varepsilon_2\} \\ &\quad \ \ \cup\{(\varepsilon_1,\varepsilon_2,\varepsilon_3)\in((\mathbb{Z}/8\mathbb{Z})^{\times})^3:\varepsilon_1\equiv-\varepsilon_2\mod 4, \varepsilon_1\neq-\varepsilon_2, \varepsilon_1\equiv-\varepsilon_3\mod 4\}  \\ &\quad \ \ \cup \{(\varepsilon_1,\varepsilon_2,\varepsilon_3)\in((\mathbb{Z}/8\mathbb{Z})^{\times})^3:\varepsilon_1\equiv\varepsilon_3\mod 4,  \varepsilon_1=-\varepsilon_2\}.
\end{align*}

\begin{lemma} \label{lemma:local_conditions}
    Let $m_1,m_2,m_3$ be square-free and pairwise coprime integers with $m_1>0$. Then \eqref{eq:conic_equation} has a non-trivial rational solution if and only if
    \begin{enumerate}
            \item [i)] for all odd $p\mid m_1 : \legendre{-m_2m_3}{p}=+1,$
            \item [ii)] for all odd $p\mid m_2 : \legendre{m_1m_3}{p}=+1,$
            \item [iii)] for all odd $p\mid m_3 : \legendre{m_1m_2}{p}=+1,$
            \item [iv)] $m_2$ and $m_3$ are not both negative,
            \item[v)] $(\varepsilon_1,\varepsilon_2, \varepsilon_3) \in E(\mu, \alpha, \beta)$, where $\varepsilon_1 \equiv  m_1' \bmod8$, $\varepsilon_2 \equiv  m_2' \bmod 8, $ $\varepsilon_3 \equiv m_3'\bmod 8$ for $m_i'$ the odd part of $m_i$ and $E(\mu, \alpha, \beta)$ as defined above.
    \end{enumerate}
\end{lemma}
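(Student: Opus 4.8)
The plan is to apply the Hasse–Minkowski principle: the conic $x^2 - m_1 m_2 y^2 - m_1 m_3 z^2 = 0$ has a nontrivial rational solution if and only if it has a nontrivial solution over $\mathbb{R}$ and over $\mathbb{Q}_p$ for every prime $p$, which is equivalent to the vanishing of the Hilbert symbols $(m_1 m_2, m_1 m_3)_\nu = 1$ for all places $\nu$. Condition (iv) is exactly the real place: the form represents zero over $\mathbb{R}$ unless both $m_1 m_2 < 0$ and $m_1 m_3 < 0$, and since $m_1 > 0$ this says $m_2, m_3$ are not both negative. It then remains to unwind $(m_1 m_2, m_1 m_3)_p = 1$ at each finite prime.

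For odd primes $p$, I would split into cases according to which of $m_1, m_2, m_3$ is divisible by $p$ (by pairwise coprimality and squarefreeness, at most one is, and the exponent is then exactly $1$). If $p \nmid m_1 m_2 m_3$ both entries are $p$-adic units, so the symbol is automatically trivial and no condition arises. If $p \mid m_1$, write $m_1 = p m_1^{(p)}$; using bimultiplicativity and the standard formula $(u, p)_p = \left(\tfrac{u}{p}\right)$ for a unit $u$, one computes $(m_1 m_2, m_1 m_3)_p = \left(\tfrac{-m_2 m_3}{p}\right)$ (the extra sign coming from $(p,p)_p = \left(\tfrac{-1}{p}\right)$), giving condition (i); the cases $p \mid m_2$ and $p \mid m_3$ are symmetric and yield (ii) and (iii) respectively, since e.g. for $p \mid m_2$ one gets $(m_1 m_2, m_1 m_3)_p = (m_2, m_1 m_3)_p = \left(\tfrac{m_1 m_3}{p}\right)$ because $m_1 m_3$ is a unit and $m_1 m_1 = m_1^2$ is a square. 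The one subtlety to check carefully is that when $p \mid m_1$ the factor $(m_1, m_1)_p$ contributes $(p m_1^{(p)}, p m_1^{(p)})_p = (p,p)_p (p, m_1^{(p)})_p^2 \cdot(\ldots) = \left(\tfrac{-1}{p}\right)$ times unit symbols that vanish, so the product collapses to the displayed Legendre symbol.

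The prime $p = 2$ is the genuinely tedious case, and I expect it to be the main obstacle — not conceptually hard, but a bookkeeping task. Here I would introduce the $2$-adic valuations $\mu, \alpha, \beta \in \{0,1\}$ of $m_1, m_2, m_3$ (with $\mu + \alpha + \beta \le 1$ by coprimality) and write $m_i = 2^{(\cdot)} m_i'$ with $m_i'$ odd. The symbol $(m_1 m_2, m_1 m_3)_2$ depends only on $\mu,\alpha,\beta$ and on $m_1', m_2', m_3' \bmod 8$, via the explicit formula $(2^a u, 2^b v)_2 = (-1)^{\epsilon(u)\epsilon(v) + a\,\omega(v) + b\,\omega(u)}$ where $\epsilon(u) = \tfrac{u-1}{2} \bmod 2$ and $\omega(u) = \tfrac{u^2-1}{8}\bmod 2$. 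Expanding this in the four relevant cases $(\mu,\alpha,\beta) \in \{(0,0,0),(1,0,0),(0,1,0),(0,0,1)\}$ and solving the resulting congruence conditions on $(\varepsilon_1,\varepsilon_2,\varepsilon_3) = (m_1',m_2',m_3') \bmod 8$ produces exactly the sets $E(\mu,\alpha,\beta)$ defined before the lemma; matching the case-by-case output of the formula against those unions of residue classes is condition (v). Finally, combining: Hasse–Minkowski says the conic is soluble iff all local symbols vanish; the odd-prime symbols vanish iff (i)–(iii) hold, the real symbol vanishes iff (iv), and the $2$-adic symbol vanishes iff (v), which is precisely the conjunction claimed. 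I would remark that because the product of all Hilbert symbols is $1$, one of the stated conditions is in fact redundant given the others, but it is cleaner to list them all.
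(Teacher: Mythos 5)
Your proposal is correct and follows essentially the same route as the paper: reduce to local solubility via Hasse--Minkowski, identify condition (iv) with the real place, compute the odd-prime Hilbert symbols by the case split on which $m_i$ the prime divides (including the $(p,p)_p=\legendre{-1}{p}$ sign giving condition (i)), and handle $p=2$ by the explicit formula in terms of $\mu,\alpha,\beta$ and the residues of $m_1',m_2',m_3'$ modulo $8$, matching against the sets $E(\mu,\alpha,\beta)$. The only difference is your closing remark on the redundancy from the product formula, which the paper does not mention.
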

\begin{proof}
    A non-trivial rational solution to \eqref{eq:conic_equation} exists if and only if there exists a local solution at all the places of $\QQ$. Condition $(iv)$ is a necessary and sufficient condition for the solubility at the place at infinity. For odd $p$ we have that \eqref{eq:conic_equation} has a solution if and only if the Hilbert symbol $(m_1m_2,m_1m_3)_p=+1$. Writing $m_1=p^{\mu}m_1', m_2=p^{\alpha}m_2', m_3=p^{\beta}m_3'$ with $\gcd(m_i',p)=1$ for all $i=1,2,3$ and $\mu,\alpha,\beta\in \{0,1 \}$ with $\mu+\alpha+\beta\leq 1$ (since $m_1,m_2,m_3$ are pairwise coprime and square-free), we have $$(m_1m_2,m_1m_3)_p= (-1)^{(\mu+\alpha)(\mu+\beta)\varepsilon(p)}\legendre{m_1'm_2'}{p}^{\mu+\beta}\legendre{m_1'm_3'}{p}^{\mu+\alpha},$$ 
    with $\varepsilon(p)=\frac{p-1}{2}$. Since $\mu+\alpha+\beta\leq 1$, this expression reduces to the following cases: \begin{enumerate}
        \item[i)] if $\mu=1$, which means that $p|m_1$, we have that $m_2=m_2'$, $m_3=m_3'$ and $(m_1m_2,m_1m_3)_p=(-1)^{\varepsilon(p)}\legendre{m_2m_3}{p}=\legendre{-m_2m_3}{p}$,
        \item[ii)] if $\alpha=1$, which means that $p|m_2$, we have that $m_1=m_1'$, $m_3=m_3'$ and $(m_1m_2,m_1m_3)_p=\legendre{m_1m_3}{p}$,
        \item[iii)] if $\beta=1$, which means that $p|m_3$, we have that $m_1=m_1'$, $m_2=m_2'$ and $(m_1m_2,m_1m_3)_p=\legendre{m_1m_2}{p}$,
        \item[iv)] if $\mu=\alpha=\beta=0$, then trivially $(m_1m_2,m_1m_3)_p=+1$.
    \end{enumerate}
    
    Finally, there exists a non-trivial 2-adic solution if and only if $(m_1 m_2, m_1m_3)_2=+1$, where $(\cdot , \cdot)_2$ denotes the Hilbert symbol at $2$. Once again, write $m_1=2^{\mu}m_1', m_2=2^{\alpha}m_2', m_3=2^{\beta}m_3'$ with $\gcd(m_i',2)=1$ for all $i=1,2,3$ and $\mu,\alpha,\beta\in \{0,1 \}$ with $\mu+\alpha+\beta\leq 1$. We have $$(m_1 m_2, m_1m_3)_2=(-1)^{\frac{m_1'm_2'-1}{2}\cdot\frac{m_1'm_3'-1}{2}+(\mu+\beta)\left(\frac{(m_1'm_2')^2-1}{8}\right)+(\mu+\alpha)\left(\frac{(m_1'm_3')^2-1}{8}\right)}.$$
    
    This is determined by the residue classes of $m_1',m_2',m_3'$ modulo $8$. Write $\varepsilon_1,\varepsilon_2,\varepsilon_3$ for the residue classes modulo $8$ of $m_1',m_2',m_3'$ respectively. Then, 
    $$(m_1 m_2, m_1m_3)_2=(-1)^{\frac{\varepsilon_1\varepsilon_2-1}{2}\cdot\frac{\varepsilon_1\varepsilon_3-1}{2}+(\mu+\beta)\left(\frac{(\varepsilon_1\varepsilon_2)^2-1}{8}\right)+(\mu+\alpha)\left(\frac{(\varepsilon_1\varepsilon_3)^2-1}{8}\right)}.$$
    This is equal to $+1$ precisely when the exponent is even. Observe that for $\varepsilon_1,\varepsilon_2\in\mathbb{Z}/8\mathbb{Z}^*$ we have the following cases:  \begin{enumerate}
        \item [i)] If $\varepsilon_1=\varepsilon_2$, then $\varepsilon_1\varepsilon_2\equiv1 \mod 8$ and we have that both $\frac{\varepsilon_1\varepsilon_2-1}{2}$ and $\frac{(\varepsilon_1\varepsilon_2)^2-1}{8}$ are even;
        \item [ii)] If $\varepsilon_1=-\varepsilon_2$, then $\varepsilon_1\varepsilon_2\equiv7 \mod 8$ and we have that $\frac{\varepsilon_1\varepsilon_2-1}{2}$ is odd and $\frac{(\varepsilon_1\varepsilon_2)^2-1}{8}$ is even;
        \item [iii)] If $\varepsilon_1\equiv\varepsilon_2\mod 4$ with $\varepsilon_1\neq\varepsilon_2$, then $\varepsilon_1\varepsilon_2\equiv5 \mod 8$ and we have that $\frac{\varepsilon_1\varepsilon_2-1}{2}$ is even and $\frac{(\varepsilon_1\varepsilon_2)^2-1}{8}$ is odd;
        \item [iv)] If $\varepsilon_1\equiv-\varepsilon_2\mod 4$ with $\varepsilon_1\neq-\varepsilon_2$, then $\varepsilon_1\varepsilon_2\equiv3 \mod 8$ and we have that both $\frac{\varepsilon_1\varepsilon_2-1}{2}$ and $\frac{(\varepsilon_1\varepsilon_2)^2-1}{8}$ are odd.
    \end{enumerate}
 With this, we can compute the parity of the exponent, depending on the values of $\mu,\alpha,\beta$. We distinguish the different cases $(\mu, \alpha, \beta) \in \{(0,0,0), (1,0,0), (0,1,0), (0,0,1) \}$. We can then check that $(m_1m_2, m_1m_3)_2=+1$ if and only if $(\varepsilon_1, \varepsilon_2, \varepsilon_3) \in E(\mu, \alpha, \beta)$ where $\varepsilon_i = m_i' \bmod 8$ and $E(\mu, \alpha, \beta)$ defined previous to the lemma. 
\end{proof}

When setting up the count, we restrict to counting positive, odd integers and later adjust the count by summing over all the different cases excluded by this assumption. 
Write \begin{align*}
    &m_1 =  2^\mu m_1',\\  &m_2 =  \delta_2  2^\alpha m_2',\\  &m_3= \delta_3 2^\beta m_3',
\end{align*}  with $\delta_i\in \{\pm1\}$ the respective signs of $m_2, m_3$ and $\mu, \alpha, \beta\in \{0,1\}$ with $\mu+\alpha+\beta\leq 1$, so that $m_1', m_2', m_3'$ are all positive and odd. In particular, $\delta_2$ and $\delta_3$ are not both $-1$.\\
It will be useful to keep track of the residues of the $m_i'$ modulo $8$ in order to handle the quadratic  symbol at $2$.  For $\boldsymbol{\varepsilon} = (\varepsilon_1, \varepsilon_2, \varepsilon_3) \in ((\ZZ / 8 \ZZ)^{\times})^3$, $\boldsymbol{\delta} =(\delta_2, \delta_3,) \in \{-1,1\}^2$ and $\bm{\nu}=(\mu,\alpha,\beta,)\in \{0,1\}^3$. We are interested in the quantity 
\begin{align*}
T(\bm{\varepsilon},\boldsymbol{\delta},\bm{\nu}):=\hspace{-2.1cm}\sum_{\substack{m_1', m_2', m_3' \in \ZZ_{>0} \\ m_i' \leq X_i \\ \mu^2(2m_1'm_2'm_3') = 1 \\ m_i \equiv \varepsilon_i \bmod 8 \\ 2^\mu m_1',\ \delta_2  2^\alpha m_2',\ \delta_3 2^\beta m_3' \text{ satisfy the local conditions in \Cref{lemma:local_conditions} } } } \hspace{-2.2cm} \tau(m_1'm_2'm_3')\# \{\tilde{t}\in \ZZ_{>0} :  \mu^2(2\tilde{t}m_1'm_2'm_3')=1, \tilde{t}\leq X_4\}\,.
\end{align*}
We then have
\begin{align}
 \mathfrak{M}(\bm{X})= 4\sum_{\substack{\boldsymbol{\delta} \in \{-1,1\}^2 \\ \boldsymbol{\delta}\neq (-1, -1)}} \sum_{\substack{\bm{\nu} \in \{0,1\}^3\\ \bm{\nu}\cdot \bm{1}\leq 1}} \sum_{\substack{\boldsymbol{\varepsilon} \in ((\ZZ/8\ZZ)^{\times})^3}} T(\bm{\varepsilon},\boldsymbol{\delta},\bm{\nu})\,,
\label{eq:final_thing_to_count}  \,
\end{align}
where $\bm{\nu}=(\mu,\alpha,\beta)\in \{0,1\}^3$.

We now begin to evaluate the counting function \eqref{eq:final_thing_to_count}. We start by addressing the the local conditions from \Cref{lemma:local_conditions}. Let  $\bm{m'}=(m_1', m_2', m_3')$. Then the conditions on $\bm{m'}$ are picked up by 
\[
\prod_{p\mid m_1'} \frac{1}{2} \left(1+\! \legendre{-\delta_2 \delta_3 2^{\alpha+\beta} m_2' m_3'}{p}\!\right)\prod_{p\mid m_2'} \frac{1}{2} \left(1+\! \legendre{\delta_32^{\mu+\beta} m_1' m_3'}{p}\!\right)\prod_{p\mid m_3'} \frac{1}{2} \left(1+\! \legendre{\delta_2 2^{\mu+\alpha}  m_1' m_2'}{p}\!\right),
\]
which for $m_1', m_2', m_3'$ coprime and square-free can be written as 
\begin{align}
    &\frac{1}{\tau(m_1'm_2'm_3')} \prod_{p\mid m_1'm_2' m_3'}  \left(1+\! \legendre{-\delta_2 \delta_3 2^{\alpha+\beta} m_2' m_3'}{p}\!\right) \left(1+\! \legendre{\delta_32^{\mu+\beta} m_1' m_3'}{p}\!\right)\left(1+\! \legendre{\delta_2 2^{\mu+\alpha}  m_1' m_2'}{p}\!\right) \nonumber \\
    &\qquad=:\frac{1}{\tau(m_1'm_2'm_3')}\cdot\mathcal{L}(\bm{m'},\boldsymbol{\delta},\bm{\nu}) \label{eq:local_L}\,.
\end{align} 
Plugging this into $T(\bm{\varepsilon},\boldsymbol{\delta},\bm{\nu})$, the $\frac{1}{\tau(m_1'm_2'm_3')}$ term in \eqref{eq:local_L} cancels out the term $\tau(m_1'm_2'm_3')$ which appeared in \Cref{lemma:reformulation_t_twist}. This simplifies the further analysis of the counting function. We have 
\begin{align}
    T(\bm{\varepsilon},\boldsymbol{\delta},\bm{\nu}) = \sum_{\substack{0<m_i'<X_i\\ m_i' \equiv \varepsilon_i \bmod{8} }}& {\mu^2(2m_1' m_2' m_3')}\cdot \mathcal{L}(\bm{m'},\boldsymbol{\delta},\bm{\nu'}).\label{eq:def_first_T} \\
    &\cdot\#\{\tilde{t} \in \ZZ_{>0}: \mu^2(2\tilde{t} m_1'm_2'm_3') = 1, \tilde{t}\leq X_4\}
\end{align}
 
 We now perform the summation over odd $\tilde{t}$, square-free and coprime to $m_1'm_2'm_3'$. We have 
\begin{align*}
    \#\{\tilde{t} \in \ZZ_{>0}:\; \mu^2(2\tilde{t} m_1'&m_2'm_3') = 1, \tilde{t}\leq X_4\} =\\
    &= \sum_{\substack{0<\tilde{t}\leq X_4}}\mu^2(2m_1'm_2'm_3'\tilde{t})  \\ &= X_4\cdot \prod_{p} \left(1-\frac{1}{p^2}\right)\cdot \prod_{p\mid 2m_1'm_2'm_3'}\left(1+\frac{1}{p}\right)^{-1} + O\left(\tau(m_1'm_2'm_3')\sqrt{X_4}\right)\\
    &=   \frac{1}{2} \prod_{p>2}  \left(1-\frac{1}{p^2}\right) \prod_{p\mid m_1'm_2'm_3'}\left(1+\frac{1}{p} \right)^{-1} X_4 +O\left(\tau(m_1'm_2'm_3')\sqrt{X_4}\right).
\end{align*}
We obtain
\begin{align*}
 T(\bm{\varepsilon},\boldsymbol{\delta},\bm{\nu}) &= \frac{1}{2} \prod_{p>2} \left(1-\frac{1}{p^2}\right) X_4     \sum_{\substack{0<m_i' \leq X_i  \\ m_i' \equiv \varepsilon_i \bmod{8}}} {\mu^2(2m_1' m_2' m_3' )}   \mathcal{L}(\bm{m'},\boldsymbol{\delta},\bm{\nu})\cdot\prod_{p\mid m_1'm_2' m_3'}   \left(1+\frac{1}{p}\right)^{-1} \\ &  + \sum_{\substack{0<m_i' \leq X_i  \\ m_i' \equiv \varepsilon_i \bmod{8}}} O\left(\tau(m_1')\tau(m_2')\tau(m_3') \sqrt{X_4}\right)  \,.
\end{align*}
When summing the error term trivially over $m_1', m_2', m_3'$, we get a contribution which is 
\begin{align*}
   O\left( X_1X_2X_3\sqrt{X_4} \log X_1 \log X_2 \log X_3 \right)\,=O\left( X_1X_2X_3\sqrt{X_4} (\log \operatorname{X})^3 \right) ,
\end{align*}
denoting 
\begin{align}
    \operatorname{X} :=\max_{i=1,2,3}\{X_i\} \label{eq:def_of_X}\,.
\end{align}
This is an error term under our assumption in \Cref{main_thm} that $X_4\gg (\log \operatorname{X} )^{6} $.\\
We now turn to expanding $\mathcal{L}(\bm{m'},\boldsymbol{\delta},\bm{\nu})$. Write $k_i \ell_i = m_i'$. We get
\begin{align*}
    \mathcal{L}(\bm{m'},\boldsymbol{\delta},\bm{\nu})&=\prod_{p\mid m_1'm_2' m_3'}  \left(1+\! \legendre{-\delta_2 \delta_3 2^{\alpha+\beta} m_2' m_3'}{p}\!\right) \left(1+\! \legendre{\delta_32^{\mu+\beta} m_1' m_3'}{p}\!\right)\left(1+\! \legendre{\delta_2 2^{\mu+\alpha}  m_1' m_2'}{p}\!\right) \\
    &= \sum_{k_i \ell_i = m_i'}\legendre{-\delta_2 \delta_3 2^{\alpha+\beta} m_2' m_3'}{k_1}\legendre{\delta_32^{\mu+\beta} m_1' m_3'}{k_2} \legendre{\delta_2 2^{\mu+\alpha}  m_1' m_2'}{k_3}
\end{align*}

We simplify this expression using multiplicative properties of the Kronecker symbol as well as quadratic reciprocity. We have
\begin{align*}
   & \legendre{-\delta_2 \delta_3 2^{\alpha+\beta} m_2' m_3'}{k_1}\legendre{\delta_32^{\mu+\beta} m_1' m_3'}{k_2} \legendre{\delta_2 2^{\mu+\alpha}  m_1' m_2'}{k_3} = \hspace{3cm} \\
    &\hspace{2cm} = \legendre{-1}{k_1}\legendre{2^\mu}{k_2 k_3}\legendre{\delta_2 2^\alpha}{k_1k_3}\legendre{\delta_3 2^\beta}{k_1k_2} \legendre{m_1'm_2'}{k_3}\legendre{m_2'm_3'}{k_1}\legendre{m_1'm_3'}{k_2} \,.
\end{align*}
We can expand the expression on the right hand side, using quadratic reciprocity,
\begin{align*}
    \legendre{m_1'm_2'}{k_3}&\legendre{m_2'm_3'}{k_1}\legendre{m_1'm_3'}{k_2} = \legendre{k_1\ell_1 k_2 \ell_2}{k_3}\legendre{k_2\ell_2 k_3 \ell_3}{k_1}\legendre{k_1\ell_1 k_3 \ell_3}{k_2}  \\
    &= (-1)^{(k_1-1)/2 \cdot (k_3-1)/2 + (k_1-1)/2\cdot(k_2-1)/2 + (k_2-1)/2\cdot(k_3-1)/2} \legendre{\ell_1}{k_2k_3} \legendre{\ell_2}{k_1k_3} \legendre{\ell_3}{k_1k_3} \\
    &= (-1)^{\eta(k_1)\eta(k_2) +\eta(k_1) \eta(k_3) + \eta(k_2)\eta(k_3)}  \legendre{\ell_1}{k_2k_3} \legendre{\ell_2}{k_1k_3} \legendre{\ell_3}{k_1k_2} \,.
\end{align*}   
Where $\eta(b) = ((b-1)/2 \bmod 4)$. We write 
\[
u(\mathbf{k})= (-1)^{\eta(k_1)\eta(k_2) +\eta(k_1) \eta(k_3) + \eta(k_2)\eta(k_3)} \legendre{-1}{k_1}\legendre{2^\mu}{k_2 k_3}\legendre{\delta_2 2^\alpha}{k_1k_3}\legendre{\delta_3 2^\beta}{k_1k_2}\]
to get 
\begin{align*}
    T(\bm{\varepsilon},\boldsymbol{\delta},\bm{\nu}) =\frac{1}{2} \prod_{p>2} \left(1-\frac{1}{p^2}\right) X_4  \sum_{k_i \leq X_i} \sum_{\substack{\ell_i \leq X_i/k_i \\ k_i\ell_i \equiv \varepsilon_i \bmod{8}}}& \Bigg( u(\mathbf{k}) {\mu^2(m_1'm_2'm_3')} \legendre{\ell_1}{k_2k_3} \legendre{\ell_2}{k_1k_3} \legendre{\ell_3}{k_1k_2} \\
    \cdot \prod_{p\mid m_1'm_2' m_3'} \left(1+\frac{1}{p}\right)^{-1} \Bigg)
& + O\left( X_1X_2X_3\sqrt{X_4} (\log \operatorname{X})^3\right) \numberthis \label{eq:T_alpha_reformulated} \,.
\end{align*}
Character sums of this type have been studied in \cite{friedlanderiwaniec} and later in \cite{rome2018hassenormprinciplebiquadratic}, although with a divisor function still present. In our case, the divisor functions canceled which simplifies the analysis. In order to proceed, we need a few auxiliary results on character sums.

\section{Analytic Lemmas}\label{sec:lemmas}
Throughout, let $f(n)$ be defined as
\begin{align}
f(n) := \prod_{p\mid n} \left(1+\frac{1}{p} \right)^{-1} \,. \label{eq:def_of_f}
\end{align}
This is the multiplicative function appearing in \eqref{eq:T_alpha_reformulated}.
\begin{lemma} \label{lemma:character_sum_without_tau}
    Let $\chi$ be a Dirichlet character modulo $q$ and let $m\in \NN$ such that $(m,q) = 1$, then
    \begin{align*}
     \sum_{\substack{n\leq x \\ (n,m)=1}} \mu^2(n) \chi(n) f(n) = \begin{cases}
     c(mq) x +O\left(\tau(mq) \sqrt{x}\log x\right) \; , \quad &\text{if } \chi = \chi_0 \\
         O(\tau(m) \sqrt{q}\log q \sqrt{x} \log x   ) \; , & \text{if } \chi \neq \chi_0
     \end{cases}\,.
    \end{align*}   
    Where $\chi_0$ denotes the principal character modulo $q$ and $c(r)$ is a constant defined as 
    \begin{align}
    c(r) = \prod_{p\mid r} \left(\frac{p+1}{p+2}\right)\prod_{p} \left(1-\frac{2}{p(p+1)}\right)\,.
    \label{eq:constant_c(r)}    
    \end{align}
\end{lemma}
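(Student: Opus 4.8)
The plan is to treat this as a Dirichlet series / Perron-type estimate, comparing $\sum_{n\le x}\mu^2(n)\chi(n)f(n)$ against the simpler sum $\sum_{n\le x}\mu^2(n)\chi(n)$ via a convolution identity, and then invoking standard bounds (Pólya--Vinogradov and the Dirichlet hyperbola trick) for the character sum with the squarefree condition. First I would write $\mu^2(n)f(n)$ as a Dirichlet convolution $\sum_{d\mid n} g(d)$ for a suitable multiplicative $g$ supported on squarefree integers: since $f$ is multiplicative with $f(p) = (1+1/p)^{-1} = p/(p+1)$, one checks that $g$ is determined by $g(p) = p/(p+1) - 1 = -1/(p+1)$ on primes and $g(p^k)=0$ for $k\ge 2$ (one must be slightly careful that the factor $\mu^2(n)$ forces the convolution to run only over squarefree $d$ with $n/d$ also squarefree and coprime to $d$; writing $n=d e$ with $\gcd(d,e)=1$, $\mu^2(d)\mu^2(e)=1$ handles this cleanly). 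Thus
\[
\sum_{\substack{n\le x\\(n,m)=1}}\mu^2(n)\chi(n)f(n) \;=\; \sum_{\substack{d\le x\\(d,m)=1}}\mu^2(d)\,g(d)\,\chi(d)\sum_{\substack{e\le x/d\\(e,md)=1}}\mu^2(e)\,\chi(e).
\]

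**Main term and error.** For the inner sum over $e$, I would use the classical estimate $\sum_{e\le y,\,(e,r)=1}\mu^2(e)\chi(e) = \mathbf{1}_{\chi=\chi_0}\cdot C(r)\,y + O(\tau(r)\sqrt{y}\log y)$ when $\chi=\chi_0$, where $C(r) = \prod_{p\nmid r}(1-p^{-2}) \cdot$ (correction), and $O(\sqrt{r}\log r\cdot \sqrt{y}\log y)$ via Pólya--Vinogradov when $\chi\ne\chi_0$. Substituting and summing the main term over $d$ (the tail $\sum_{d>x}$ and the partial-sum error being controlled because $\sum_d |g(d)|/d$ converges, indeed $\sum |g(d)|d^{-s}$ converges for $\Re s>0$) yields the constant
\[
c(mq) \;=\; \Big(\prod_{d\ge 1,\,(d,mq)=1}\frac{\mu^2(d)g(d)}{d}\Big)\cdot\prod_{p\nmid mq}\Big(1-\frac1{p^2}\Big)\cdot(\text{local factors at }p\mid mq),
\]
which after collecting the Euler factors $\big(1-\tfrac1{p+1}\big)\big(1-\tfrac1{p^2}\big) = 1 - \tfrac{2}{p(p+1)}$ at $p\nmid mq$ and the appropriate factor $\tfrac{p+1}{p+2}$ at $p\mid mq$ reproduces exactly \eqref{eq:constant_c(r)}. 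The accumulated error from the $O(\tau(r)\sqrt{y}\log y)$ terms, summed over $d\le x$ with $y=x/d$, is $\ll \tau(mq)\sqrt{x}\log x\sum_d |g(d)|\tau(d)d^{-1/2}$, and the last series converges (since $|g(d)|\le 1/d$ on primes makes $\sum |g(d)|\tau(d)d^{-1/2}$ dominated by $\prod_p(1+O(p^{-3/2}))<\infty$), giving the stated $O(\tau(mq)\sqrt{x}\log x)$.

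**The non-principal case.** When $\chi\ne\chi_0$ I would again split via the convolution; now even the inner sum over $e$ has no main term, so both the $d$-sum and $e$-sum contribute only error. Applying Pólya--Vinogradov to the $e$-sum gives $O(\tau(md)\sqrt{q}\log q)$ uniformly, and summing $|g(d)|$ over $d\le x$ against this — more carefully, balancing the ranges $d\le\sqrt x$ and $d>\sqrt x$ by Rankin's trick / the hyperbola method so that the $\sqrt{x/d}$ savings in the trivial bound for small $d$ and the smallness of $\sum_{d>\sqrt x}|g(d)|$ combine — yields $O(\tau(m)\sqrt q\log q\sqrt x\log x)$. The main obstacle I anticipate is bookkeeping the coprimality and squarefree constraints through the convolution (ensuring $\gcd(d,e)=1$ and that $\tau(md)$ rather than $\tau(m)\tau(d)$ appears, and that the divisor-function factors in the error do not blow up the final bound), together with verifying that the Euler-product manipulation for the constant lands precisely on $c(r)$ as written rather than on a superficially different but equal product; these are routine but error-prone, so I would set them up carefully before computing.
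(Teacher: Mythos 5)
Your strategy is essentially the one the paper uses: the paper expands $f(n)=\sum_{e\mid n}\mu(e)h(e)$ with $h(p)=1/(p+1)$ and simultaneously $\mu^2(n)=\sum_{d^2\mid n}\mu(d)$, reducing everything to complete character sums over $k\le x/[e,d^2]$, to which it applies P\'olya--Vinogradov (non-principal case) or the trivial count of integers coprime to $mq$ (principal case), and then completes the $d,e$-sums into an Euler product. Your version packages the second expansion as a black-box estimate for $\sum_{e\le y,\,(e,r)=1}\mu^2(e)\chi(e)$; this is the same computation in different order, and your error terms assemble correctly to the stated bounds.

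There is, however, a concrete error in your verification of the constant, which matters here because $c(mq)$ feeds directly into the leading constant of the main theorem. First, the density in your ``classical estimate'' is not $\prod_{p\nmid r}(1-p^{-2})$ times an unspecified correction at $p\mid r$: the density of squarefree integers coprime to $r$ is $\frac{6}{\pi^2}\prod_{p\mid r}\frac{p}{p+1}$, so with $r=mqd$ the inner sum contributes an extra factor $f(d)=\prod_{p\mid d}\frac{p}{p+1}$ depending on $d$, which must be carried into the $d$-sum. Doing so, the local factor of the completed $d$-sum at $p\nmid mq$ is $1+\frac{g(p)f(p)}{p}=1-\frac{1}{(p+1)^2}$, and the identity you need is
\[
\left(1-\frac{1}{p^2}\right)\left(1-\frac{1}{(p+1)^2}\right)=\frac{(p-1)(p+2)}{p(p+1)}=1-\frac{2}{p(p+1)}\,,
\]
whereas the identity you wrote, $\bigl(1-\frac{1}{p+1}\bigr)\bigl(1-\frac{1}{p^2}\bigr)=1-\frac{2}{p(p+1)}$, is false (the left side equals $1-\frac1p$). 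As written your computation would land on the wrong Euler product; the fix is exactly the bookkeeping you flagged as ``error-prone.'' A second, smaller point: in the non-principal case the $e$-sum is not $O(\tau(md)\sqrt{q}\log q)$ uniformly in $y$ --- because of the $\mu^2$ weight one only gets $O(\tau(md)\sqrt{q}\log q\,\sqrt{y}\log y)$, as you correctly state earlier; with that bound the $d$-sum converges after extracting $\sqrt{x}$ and the final estimate $O(\tau(m)\sqrt{q}\log q\,\sqrt{x}\log x)$ does follow.
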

\begin{proof}
We first consider the non-principal case, $\chi \neq \chi_0$.
    We have $f(n)=\sum_{d\mid n} \mu(d) h(d)$, where $h(n) = \prod_{p\mid n} \frac{1}{p+1}$. Note that this holds, since $h(n)$ is a multiplicative arithmetic function and thus  $$\sum_{d\mid n} \mu(d) h(d)=\prod_{p|n}\sum_{k=0}^{v_p(n)}\mu(p^k)h(p^k)=\prod_{p|n}\left( 1-\frac{1}{p+1}\right)=\prod_{p|n}\left( 1+\frac{1}{p}\right)^{-1}\,,$$
    where the second equality follows from the fact that the only two non-zero terms of the sum are those for $k=0,1$.
    We have 
\begin{align*}
    \sum_{\substack{n\leq x \\ (n,m)=1}} \mu^2(n) f(n)\chi(n) &= \sum_{\substack{n\leq x \\ (n,m)=1}} f(n) \chi(n) \sum_{d^2\mid n} \mu(d) = \sum_{\substack{d\leq \sqrt{x} \\ (d, m)=1 }} \mu(d) \sum_{\substack{n\leq x \\ d^2 \mid n \\ (n,m)=1}} \chi(n) f(n)  \\
    &= \sum_{\substack{d\leq \sqrt{x} \\ (d, m)=1 }} \mu(d) \sum_{\substack{n\leq x \\ d^2 \mid n \\ (n,m)=1}}  \chi(n) \sum_{e\mid n} \mu(e) h(e) \\
        &= \sum_{\substack{d\leq \sqrt{x} \\ (d,m)=1}} \mu(d) \sum_{\substack{e\leq x\\
         (e, m)=1}} \mu(e)h(e) \sum_{\substack{n\leq x \\ [e, d^2] \mid n  \\ (n,m)=1}}  \chi(n) \\        
        &= \sum_{\substack{d\leq \sqrt{x} \\(d,m)=1 }} \mu(d) \sum_{\substack{e\leq x \\ (e , m)=1}} \mu(e)h(e) \chi([d^2,e])  \sum_{\substack{k\leq x/[d^2,e] \\ (k,m)=1}}  \chi(k) \,. \numberthis \label{eq:character_sum_manipulated}
\end{align*}

Since $\sum_{\substack{k\leq x/[e,d^2] \\ (k,m)=1}} \chi(k) = \sum_{g \mid m}\mu(g) \chi(g) \sum_{k\leq x/([e,d^2]g)} \chi(k)$ we can apply Polya-Vinogradov to the inner sum and get $$\sum_{\substack{k\leq x/([e,d^2]) \\ (k,m)=1}} \chi(k)= O(\tau(m)\sqrt{q} \log q).$$ Since $|\mu(e) h(e)|\leq \frac{1}{e}$, \eqref{eq:character_sum_manipulated} is bounded by
\[
O\left( \tau(m)\sqrt{q}\log q\sum_{d\leq \sqrt{x}} \sum_{e\leq x} \frac{1}{e}\right)  = O\left( \tau(m)\sqrt{q} \log q \sqrt{x} \log x\right)\,.
\]

For $\chi=\chi_0$, the principal character modulo $q$, the sum from \eqref{eq:character_sum_manipulated}
simplifies to 
\begin{align*}
   \sum_{\substack{n\leq x \\ (n,m)=1}} \mu^2(n) f(n)\chi_0(n) &= \sum_{\substack{d\leq \sqrt{x} \\ (d^2,mq)=1 }} \mu(d)  \sum_{ \substack{e\leq x \\ (e,mq)=1}} \mu(e) h(e)  \sum_{\substack{k\leq x/[e,d^2] \\ (k,mq)=1}} 1 \\
   &= 
   \sum_{\substack{d\leq \sqrt{x} \\ (d^2,mq)=1 }} \mu(d)  \sum_{ \substack{e\leq x \\ (e,mq)=1}} \mu(e) h(e) \left( \frac{x}{[e, d^2]} \prod_{p\mid mq}\left(1-\frac{1}{p}\right)+O\big(\tau(mq)\big)\right) \,.\\
\end{align*}
Again, we notice that $h(e) \leq \frac{1}{e}$ for square-free $e$, hence we may sum over the error terms by taking absolute values to obtain
\begin{align*}
    \sum_{\substack{n\leq x \\ (n,m)=1}}& \mu^2(n) f(n)\chi_0(n) = \\
    &= x \prod_{p\mid mq}\left(1-\frac{1}{p}\right) \sum_{\substack{d\leq \sqrt{x} \\ (d^2,mq)=1 }} \mu(d)  \sum_{ \substack{e\leq x \\ (e,mq)=1}} \mu(e) h(e)  \frac{1}{[e, d^2]} +O\left(\tau(mq) \sqrt{x}\log x\right)\,.\\
\end{align*}
We complete the summation over $d$ and $e$. We can bound the tails of the summation over $d$, again using $|\mu(e)h(e)|\leq \frac{1}{e}$, by
\begin{align*}
x \sum_{d\geq \sqrt{x}} \sum_{e} \frac{(e,d)}{e^2 d^2}  &\ll x\sum_{e} \frac{1}{e^2} \sum_{d>\sqrt{x}} \frac{1}{d^2} \sum_{k\mid (e,d)} \varphi(k) \ll x \sum_{k} \frac{\varphi(k)}{k^4} \sum_{e} \frac{1}{e^2}   \sum_{d>\sqrt{x}/k} \frac{1}{d^2}\\
&\ll \sqrt{x} \sum_{k} \frac{\varphi(k)}{k^3} \ll \sqrt{x} \,.
\end{align*}
Similarly, the summation over the remaining $e$ can be bounded by 
\[
x \sum_{e\geq x} \sum_{d} \frac{(e,d)}{e^2 d^2}  \ll x\sum_{d} \frac{1}{d^2} \sum_{e>x} \frac{1}{e^2} \sum_{k\mid (e,d)} \varphi(k) \ll x\sum_{k} \frac{\varphi(k)}{k^4} \sum_{d}  \frac{1}{d^2} \sum_{e> x/k} \frac{1}{e^2} \ll 1\,.
\]
The sum over $d$ and $e$ is absolutely convergent, we write this as an Euler product
\begin{align*}
    \sum_{\substack{d\\ (d^2,mq)=1 }} \mu(d)  \sum_{ \substack{e \\ (e,mq)=1}} \mu(e) h(e)  \frac{1}{[e, d^2]} &= \prod_{p\nmid mq} \left(1- \frac{1}{p^2}-\frac{1}{p(p+1)} + \frac{1}{p^2(p+1)}\right) \\
    &= \prod_{p\nmid mq} \left(1 -\frac{2}{p(p+1)}\right)\,,
\end{align*}
which gives
\begin{align*}
\sum_{\substack{n\leq x \\ (n,m)=1}} \mu^2(n) f(n)\chi_0(n)  = x \prod_{p\mid mq} \left( \frac{p+1}{p+2}\right)  \prod_{p} \left(1 -\frac{2}{p(p+1)}\right) + O(\tau(mq) \sqrt{x} \log x) \,.
\end{align*}
\end{proof}

\begin{lemma} \label{lemma:character_sum_without_tau_in_residue_class}
Let $\chi$ be a character modulo $q_1$. Let $a, q_0 \in \NN$ such that $(a, q_0)=1$. Let $q = q_1 q_0$ for some $q_1\in \NN$ with $(q_0, q_1)=1$. Then 
    \[
    \sum_{\substack{n\leq x \\ (n,m)=1 \\ n\equiv a\bmod q_0}} \mu^2(n) \chi(n) f(n) = \begin{cases}
        \frac{c(mq)}{\varphi(q_0)} x + O\left(\tau(mq)  \sqrt{x}\log x \right)\,, \qquad &\text{if } \chi=\chi_0 \\
        O(\tau(m) \sqrt{q}\sqrt{x} \log x \log q) & \text{if }\chi\neq \chi_0
    \end{cases} \,,
    \]
    where $\chi_0$ denotes the principal character modulo $q_1$ and $c(r)$ defined in \eqref{eq:constant_c(r)}.
\end{lemma}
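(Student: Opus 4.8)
The plan is to detect the congruence condition $n\equiv a\bmod q_0$ by expanding in Dirichlet characters modulo $q_0$ and then invoke \Cref{lemma:character_sum_without_tau}. As in that lemma, assume $(m,q)=1$. Since $(a,q_0)=1$, any $n$ with $n\equiv a\bmod q_0$ automatically satisfies $(n,q_0)=1$, so for such $n$ orthogonality of characters gives
\[
\mathbf{1}_{\,n\equiv a\bmod q_0}=\frac{1}{\varphi(q_0)}\sum_{\psi\bmod q_0}\overline{\psi}(a)\,\psi(n),
\]
the sum being over all Dirichlet characters $\psi$ modulo $q_0$. Substituting this and interchanging the order of summation, the quantity to be estimated becomes
\[
\frac{1}{\varphi(q_0)}\sum_{\psi\bmod q_0}\overline{\psi}(a)\sum_{\substack{n\leq x\\ (n,m)=1}}\mu^2(n)\,(\chi\psi)(n)\,f(n),
\]
where, because $(q_0,q_1)=1$, each product $\chi\psi$ is a single Dirichlet character modulo $q=q_0q_1$.

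Next I would apply \Cref{lemma:character_sum_without_tau} to each of the $\varphi(q_0)$ inner sums. The character $\chi\psi$ is principal modulo $q$ exactly when $\chi=\chi_0$ (the principal character modulo $q_1$) and $\psi=\psi_0$ (the principal character modulo $q_0$). So if $\chi\neq\chi_0$, every inner sum is $O(\tau(m)\sqrt{q}\log q\,\sqrt{x}\log x)$, and dividing by $\varphi(q_0)$ gives the stated bound at once. If $\chi=\chi_0$, the single term $\psi=\psi_0$ produces $c(mq)x+O(\tau(mq)\sqrt{x}\log x)$ (and $\overline{\psi_0}(a)=1$ since $(a,q_0)=1$), yielding the main term $\frac{c(mq)}{\varphi(q_0)}x$ after division by $\varphi(q_0)$. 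For the remaining terms, with $\psi\neq\psi_0$, I would rewrite $\sum_{(n,m)=1}\mu^2(n)(\chi_0\psi)(n)f(n)=\sum_{(n,mq_1)=1}\mu^2(n)\psi(n)f(n)$ and apply \Cref{lemma:character_sum_without_tau} with the non-principal character $\psi$ of modulus $q_0$ (legitimate because $(mq_1,q_0)=1$), getting $O(\tau(mq_1)\sqrt{q_0}\log q_0\,\sqrt{x}\log x)$ for each, a quantity absorbed into $O(\tau(mq)\sqrt{x}\log x)$, the implied constant being allowed to depend on $q_0$, which is bounded wherever this lemma is used.

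This is essentially a mechanical consequence of \Cref{lemma:character_sum_without_tau}, so no step presents a real difficulty; the work is entirely bookkeeping. The points that need attention are: confirming that $(n,q_0)=1$ is forced, so that the character expansion of the congruence condition is valid; correctly checking that $\chi\psi$ is principal modulo $q$ only for the single pair $(\chi_0,\psi_0)$, so that the main term appears exactly once; and, in the case $\chi=\chi_0$, treating the non-principal $\psi$ through the $q_0$-modulus form of \Cref{lemma:character_sum_without_tau} rather than the cruder $q$-modulus form, so that their total contribution stays within the claimed error. This last point is the only (minor) subtlety.
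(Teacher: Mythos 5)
Your proof is correct and follows essentially the same route as the paper: detect the congruence $n\equiv a\bmod q_0$ by orthogonality of Dirichlet characters modulo $q_0$, observe that each product $\chi\psi$ is a character modulo $q=q_0q_1$ which is principal only for the pair $(\chi_0,\psi_0)$, and apply \Cref{lemma:character_sum_without_tau} to each inner sum. Your extra care in the case $\chi=\chi_0$, $\psi\neq\psi_0$ — invoking \Cref{lemma:character_sum_without_tau} with modulus $q_0$ rather than $q$ so that the contribution stays within $O(\tau(mq)\sqrt{x}\log x)$ for bounded $q_0$ — is a point the paper glosses over (it simply cites the non-principal case modulo $q$), and is a legitimate, indeed slightly tidier, way to land on the stated error term.
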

\begin{proof}
Using the Chinese remainder theorem we may assume without loss of generality that $(a, q)=1$. We introduce a summation over the characters modulo $q_0$. We have 
\[
\sum_{\substack{n\leq x \\ (n,m)=1 \\ n\equiv a \bmod q_0}} \mu^2(n) \chi(n) f(n)  = \frac{1}{\varphi(q_0)}\sum_{\widetilde{\chi} \bmod q_0}  \widetilde{\chi}(\overline{a}) \sum_{\substack{n\leq x \\ (n,m)=1 }} \widetilde{\chi}(n) \mu^2(n) \chi(n) f(n) \,.
\]
Now $\widetilde{\chi}(n) \chi(n)$ defines a character modulo $q =q_0q_1$. This character is principal only if $\chi$ and $\widetilde{\chi}$ are the principal characters modulo $q_0$, $q_1$ respectively. We are then in the first case of \Cref{lemma:character_sum_without_tau}. All other cases correspond to a sum over a non-principal character modulo $q$, which correspond to the second case of \Cref{lemma:character_sum_without_tau}.    
\end{proof}

Last, we state a result of Friedlander and Iwaniec on bilinear character sums. 
\begin{lemma}[\texorpdfstring{\cite[Lemma 2]{friedlanderiwaniec}}] \label{lemma:friedlander_iwaniec_large_sieve}
Let $\alpha_m, \beta_n$ be any complex numbers supported on odd integers with $|\alpha_m|,|\beta_n|\leq 1$. Then 
\[
\sum_{m\leq M} \sum_{n\leq N } \alpha_m \beta_n \legendre{m}{n} \ll (MN^{5/6}+M^{5/6}N) (\log 3MN)^{7/6}\,.
\]
\end{lemma}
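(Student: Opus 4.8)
The plan is to reduce to a large--sieve estimate for quadratic characters by exploiting the symmetry between $m$ and $n$ via quadratic reciprocity. Since the claimed bound is symmetric in $M$ and $N$, we may assume $M\le N$. For odd positive $m,n$ one has $\legendre{m}{n}=\legendre{n}{m}(-1)^{\frac{m-1}{2}\cdot\frac{n-1}{2}}$, and the sign depends only on $m,n\bmod 4$; splitting the summation into the (at most four) pairs of residue classes of $m$ and $n$ modulo $4$, and absorbing the resulting constant sign together with the indicator of the residue class into new coefficients $\alpha'_m,\beta'_n$ — still bounded by $1$ and still supported on odd integers — reduces the problem to bounding $S:=\sum_{m\le M}\sum_{n\le N}\alpha_m\beta_n\legendre{n}{m}$. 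The point of this reduction is that we are now free to regard, for fixed $m$, the function $n\mapsto\legendre{n}{m}$ as a real Dirichlet character modulo $m$.

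I would then apply Cauchy--Schwarz in the shorter variable $m$, obtaining
\[
|S|^2\le M\sum_{m\le M}\Bigl|\sum_{n\le N}\beta_n\legendre{n}{m}\Bigr|^2 .
\]
Each character $\legendre{\cdot}{m}$ is induced by a primitive real character of conductor at most $4m\le 4M$, and a given primitive character arises from $O(1)$ squarefree $m$ (and from $O(\log M)$ values of $m$ in general, accounting for the square part). Hence the inner double sum is dominated, up to a factor $\log 3MN$, by $\sum_{q\le 4M}\sum_{\chi\bmod q}\bigl|\sum_{n\le N}\beta_n\chi(n)\bigr|^2$, and the multiplicative large sieve inequality bounds this by $\ll (M^2+N)N\log 3MN$. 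This already gives $|S|\ll\bigl(M^{3/2}N^{1/2}+M^{1/2}N\bigr)(\log 3MN)^{1/2}$, which is of the required shape (indeed stronger) in the range $N\gg M^{4/3}$.

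In the complementary, essentially balanced range $M\le N\ll M^{4/3}$ one needs genuine two--dimensional cancellation, and I would open the square directly rather than use the large sieve as a black box, writing $\sum_{m\le M}\bigl|\sum_n\beta_n\legendre{n}{m}\bigr|^2=\sum_{n_1,n_2\le N}\beta_{n_1}\overline{\beta_{n_2}}\sum_{m\le M}\legendre{n_1n_2}{m}$. The diagonal terms, where $n_1n_2$ is a perfect square, are evaluated by $\sum_{m\le M}\legendre{\square}{m}=M\prod_{p\mid n_1n_2}(1-\tfrac1p)+O(\tau(n_1n_2))$ and contribute $O(MN)$; for the off--diagonal terms one bounds $\sum_{m\le M}\legendre{n_1n_2}{m}$ by P\'olya--Vinogradov (equivalently, by reciprocity and completion of the sum), and then splits the off--diagonal sum over $(n_1,n_2)$ according to whether the squarefree kernel of $n_1n_2$ lies below or above a parameter $V$, using P\'olya--Vinogradov on the small--modulus part and the large sieve on the large--modulus part. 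Optimizing $V$ produces the hybrid exponent $\tfrac56$, while the surplus logarithmic factors — coming from the divisor sums $\sum_{n\le N}\tau(n)\ll N\log N$ that control both the diagonal count and the character multiplicities, together with the $(\log 3MN)^{1/2}$ from Cauchy--Schwarz — assemble into the stated power $(\log 3MN)^{7/6}$. The main obstacle is precisely this balancing: the P\'olya--Vinogradov bound degrades as the modulus grows while the large--sieve bound degrades as it shrinks, so the split must be arranged so that neither dominates, which amounts to proving the quadratic large sieve in the weak but logarithmically clean form needed here. Alternatively, one can simply invoke Heath-Brown's quadratic large sieve after the reciprocity reduction, yielding an even stronger bound (with $(MN)^{\varepsilon}$ in place of $(\log 3MN)^{7/6}$) once the elementary square--part dissection is used to remove the squarefreeness restriction.
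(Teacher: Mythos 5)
This lemma is not proved in the paper at all: it is quoted verbatim as Lemma~2 of Friedlander--Iwaniec, and the authors' only obligation is the citation. So the relevant question is whether your argument actually establishes the stated bound, and I do not think it does in the critical range. Your first regime is fine: reciprocity to pass to $\legendre{n}{m}$, Cauchy--Schwarz in the shorter variable, and the multiplicative large sieve give $|S|\ll (M^{3/2}N^{1/2}+M^{1/2}N)(\log 3MN)^{O(1)}$, which is admissible precisely when $N\gg M^{4/3}$ (modulo the routine bookkeeping for imprimitive characters and square parts of $m$, which you wave at but which only costs logarithms). The problem is the balanced range $M\le N\ll M^{4/3}$, where the entire content of the lemma lives, and there your proof consists of the assertion that a P\'olya--Vinogradov/large-sieve dichotomy in the squarefree kernel $d$ of $n_1n_2$, ``optimized in $V$,'' produces the exponent $5/6$. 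No optimization is carried out, and the step that is supposed to do the work is flawed: after opening the square the moduli $d$ range up to $N^2$, while the inner sums over $m$ have length $M\le N$, so the classical multiplicative large sieve bound $(Q^2+M)M$ with $Q\asymp N^2$ is \emph{worse than trivial} in this regime. The only tool that rescues a second-moment argument here is Heath-Brown's quadratic large sieve, which carries a factor $(MN)^{\varepsilon}$; that neither implies the stated $(\log 3MN)^{7/6}$ nor is usable in this paper, where Section~6.1 needs to beat a fixed power of $\log X$ against $V=(\log X)^{O(1)}$ and a loss of $X^{\varepsilon}$ would destroy the argument. There is also the unaddressed technicality that $\legendre{n_1n_2}{m}=\legendre{d}{m}\mathbf{1}_{(m,r)=1}$ with $r=\sqrt{n_1n_2/d}$ depending on the pair, so the off-diagonal does not cleanly factor through $d$.

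Even granting that the polynomial exponents could be recovered by a more careful kernel-size dissection, the logarithmic exponent $7/6$ cannot emerge from the steps you describe: a single Cauchy--Schwarz already contributes $(\log)^{1/2}$, the diagonal is $MN\log N$ (not $O(MN)$ as you state --- there are $\asymp N\log N$ pairs with $n_1n_2$ a square), and the divisor-weighted multiplicity counts $R(d)\ll\tau(d)N\log N/\sqrt{d}$ add further logarithms, so the best one can hope for along these lines is an exponent at least $3/2$. The fractional exponents $5/6$ and $7/6$ in Friedlander--Iwaniec's statement come from a higher-moment (H\"older) structure in their actual proof, in which the diagonal consists of tuples with square product counted with an explicit power of $\log$; that is the idea missing from your sketch. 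In short: the lopsided range is correct, the balanced range is a genuine gap, and the proposal does not prove the lemma with the stated logarithmic saving --- which is exactly the feature the paper needs from it.
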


\section{Proof of \Cref{main_thm}}  \label{sec:proof_of_main}
\subsection{Character sums over large regions} \label{subsec:large_regions}
Recall from \eqref{eq:T_alpha_reformulated} that we want to estimate the character sums  \begin{align*} \label{eq:char_sums}
    \sum_{k_i \leq X_i} \sum_{\substack{\ell_i \leq X_i/k_i \\ k_i\ell_i \equiv \varepsilon_i \bmod{8}}}& \Bigg( u(\mathbf{k}){\mu^2(m_1'm_2'm_3')}  \legendre{\ell_1}{k_2k_3} \legendre{\ell_2}{k_1k_3} \legendre{\ell_3}{k_1k_2} 
    f(m_1'm_2' m_3') \Bigg), \numberthis 
\end{align*}
where $f(n)$ was defined in \eqref{eq:def_of_f} as $f(n) = \prod_{p\mid n} \left(1+\frac{1}{p} \right)^{-1}$ and $m_i ' = k_i\ell_i$ for $i=1,2,3$ .
Let $V$ be a parameter to be chosen later, (a choice of a large power of $\log (X_1) \log X_2 \log X_3)$ will suffice). We want to split the above sum into appropriate ranges, since the contribution of quite a few regions can be bounded satisfactorily. \\
In the range $k_i , \ell_i \leq V$ for a fixed $i\in\{1,2,3\}$, we can estimate the overall contribution trivially to $O(V^2 X_j X_k X_4)$ where $j,k \neq i$. \\
In the ranges where $k_i>V$ and $\ell_j>V$ for some $i\neq j$ we want to use cancellation coming from $\legendre{\ell_i}{k_j}$ using \Cref{lemma:friedlander_iwaniec_large_sieve}. Let for example $k_1, \ell_2>V$. Define  
\begin{align*}
    f_1(k_1) := \mathbf{1}_{\left\{\substack{k_1 \ell_1\equiv \varepsilon_1\bmod 8\\(k_1, \ell_1 k_2 k_3\ell_3)=1}\right\}} {\mu^2(k_1)} \legendre{\ell_3}{k_1}  f(k_1),\\
   f_2(\ell_2) := \mathbf{1}_{\left\{\substack{\ell_2 k_2\equiv \varepsilon_2\bmod 8\\(\ell_2, \ell_1 k_2 k_3\ell_3)=1} \right\} } {\mu^2(\ell_2)} \legendre{\ell_2}{k_3}  f(k_2).
\end{align*}
Rewrite \eqref{eq:char_sums} as  
\begin{align*}
    \sum_{\substack{\ell_1\leq X_1\\k_2\leq X_2 \\ k_3\leq X_3 } } \sum_{\substack{ \ell_3 \leq X_3/k_3 \\ k_3\ell_3 \equiv \varepsilon_3 \bmod{8}}} {\mu^2(k_2 k_3 \ell_1 \ell_3)}
    \legendre{\ell_1}{k_2 k_3} \legendre{\ell_3}{k_2} f(k_2k_3\ell_1\ell_3) \sum_{\substack{k_1\leq X_1/\ell_1\\ \ell_2\leq X_2 / k_2 \\ (k_1, \ell_2) =1 }} f_1(k_1) f_2(\ell_2)  u(\mathbf{k})\legendre{\ell_2}{k_1} 
   .
\end{align*}

Consider the innermost sum and restrict to the ranges where $k_1,\ell_2 >V$. The coprimality condition $(k_1, \ell_2) = 1$ is picked up by the Legendre symbol. Hence
\begin{align*}
    \sum_{\substack{V<k_1\leq X_1/\ell_1\\ V<\ell_2\leq X_2 / k_2 \\ (k_1, \ell_2)=1}} f_1(k_1) f_2(\ell_2) u(\mathbf{k})\legendre{\ell_2}{k_1}  =
    &\sum_{\substack{V<k_1\leq X_1/\ell_1\\ V<\ell_2\leq X_2 / k_2 }} f_1(k_1) f_2(\ell_2) u(\mathbf{k})\legendre{\ell_2}{k_1} \\
    &\ll \frac{X_1X_2}{\ell_1 k_2}\left( \ell_1^{1/6}X_1^{-1/6}+k_2^{1/6}X_2^{-1/6}\right) \log(X_1X_2)^{7/6} 
\end{align*}
where we applied \Cref{lemma:friedlander_iwaniec_large_sieve} using $|f_1|, |f_2 |,|u|\leq 1$. When summed over the remaining variables $\ell_1\leq X_1/V, k_2\leq X_2< V$ as well as $k_3, \ell_3$ we have an overall contribution of 
\begin{align*}
 &\ll    \sum_{k_3\ell_3\leq X_3}\sum_{\ell_1\leq\frac{X_1}{V}}\sum_{k_2\leq\frac{X_2}{V}} \frac{X_1X_2}{\ell_1 k_2}\left( \ell_1^{1/6}X_1^{-1/6}+k_2^{1/6}X_2^{-1/6}\right) \log(X_1X_2)^{7/6}  \\
    &\ll X_1X_2   \left( \left(\frac{X_1}{V}\right)^{1/6}X_1^{-1/6}+\left(\frac{X_2}{V}\right)^{1/6}X_2^{-1/6}\right) \log(X_1X_2)^{7/6}\sum_{\ell_1\leq\frac{X_1}{V}}\sum_{k_2\leq\frac{X_2}{V}}\frac{1}{\ell_1k_2} \sum_{k_3\ell_3\leq X_3} 1 \\
    &\ll V^{-1/6}X_1X_2 X_3 \log X_3 (\log X_1 \log X_2)^{13/6} \\
    & \ll V^{-1/6} X_1X_2X_3 (\log \operatorname{X})^{16/3}
\end{align*}
which allows for a saving in $V$. Here $\operatorname{X} = \max \{X_1, X_2, X_3\}$, defined in  \cref{eq:def_of_X}. In the proof of \Cref{main_thm} we will make the choice of $V$ specific.

This applies for all $k_i, \ell_j$ in an analogue way. By bounding these regions already, we are left only with the two regions where either $k_1, k_2, k_3$ are all bounded by $V$ or $\ell_1, \ell_2, \ell_3$ are all bounded by $V$ from where we will get a contribution to the main term.

\subsection{Characters with small moduli} \label{sec:error_terms_revisited}
Once we restrict to the two ranges where either all the $k_i\leq V$ or all the $\ell_i \leq V$, we can rewrite the sums we are interested in. Consider first the range where the $k_i\leq V$: The sum in \eqref{eq:char_sums} then looks like 
\begin{align}
 \sum_{k_i\leq V} {\mu^2(k_1k_2k_3)} u(\mathbf{k}) f(k_1k_2k_3 ) T_{k_1, k_2, k_3} \label{eq:sum_over_T_k}
\end{align}
where, for a fixed choice of $k_1, k_2, k_3$, we set 
\begin{align*}
    T_{k_1, k_2, k_3}&=\sum_{\substack{\ell_i\leq X_i/k_i\\ (\ell_i,k_1k_2k_3)=1\\ \ell_ik_i\equiv \varepsilon_i \bmod 8 }} {\mu^2(\ell_1\ell_2\ell_3)} \legendre{\ell_1}{k_2k_3} \legendre{\ell_2}{k_1k_3} \legendre{\ell_3}{k_1k_2} f(\ell_1\ell_2\ell_3)\,.
\end{align*}
Next, we consider the range $\ell_i\leq V$ we define $T'_{\ell_1, \ell_2, \ell_3}$. Note that we have $u(\mathbf{k}) = u(\boldsymbol{\varepsilon\ell}) = u(\varepsilon_1\ell_1, \varepsilon_2\ell_2, \varepsilon_3\ell_3)$, as $u$ depends only on the residue mod $8$. We can rewrite the sum in \eqref{eq:sum_over_T_k}:   
\begin{align}
\sum_{\ell_i\leq V} {\mu^2(\ell_1\ell_2\ell_3)} u(\boldsymbol{\varepsilon\ell}) f(\ell_1\ell_2\ell_3 ) {T'}_{\ell_1, \ell_2, \ell_3} \label{eq:sum_over_T'_ell}\,,
\end{align}
where $T'_{\ell_1, \ell_2, \ell_3}$ takes a similar form to that of $T_{k_1, k_2, k_3}$:
\begin{align*}
    {T'}_{\ell_1, \ell_2, \ell_3}&=\sum_{\substack{k_i\leq X_i/\ell_i\\ (k_i,\ell_1\ell_2\ell_3)=1\\ \ell_ik_i\equiv \varepsilon_i \bmod 8 }} {\mu^2(k_1k_2k_3)} \legendre{\ell_1}{k_2k_3} \legendre{\ell_2}{k_1k_3} \legendre{\ell_3}{k_1k_2} f(k_1k_2k_3)\,.
\end{align*}
We use the Lemmas from the previous section to get savings whenever we are in the case where $(k_1, k_2, k_3) \neq (1,1,1)$ or $(\ell_1, \ell_2, \ell_3 ) \neq (1,1,1)$ respectively.
\begin{lemma} \label{lemma:bounds_on_T_k} Let $\operatorname{X} = \max \{X_1,X_2, X_3\}$.
In the notation above, for $(k_1,k_2, k_3) = (1,1,1)$ we have
    \begin{align*}
    T_{1, 1, 1} &= \tilde{c} {X_1X_2X_3}  
 \cdot\Bigg(1+O\Bigg( (\log \operatorname{X})^{13} \left( X_1^{-1/2}+X_2^{-1/2}+X_3^{-1/2}\right)\Bigg)\Bigg)
    \end{align*}
    for a constant $\tilde{c}$ given by
    \begin{align}
    \tilde{c} = \left(\frac{3}{16}\right)^3 c(1)^3\cdot \prod_{p>2} \left(1- \frac{3}{(p+2)^2}+\frac{2}{(p+2)^3}\right) \,,
    \label{eq:def_c_tilde_T}    
    \end{align}
    with $c(1)$ defined in \eqref{eq:constant_c(r)}.
    Whenever $(k_i, k_j, k_r) \neq (1,1,1)$ for $\{i,j,r\} = \{1,2,3\}$, say $k_r\neq 1$, we have 
      \begin{align*}
    T_{k_1, k_2, k_3} &\ll  X_r \sqrt{ X_iX_j}  (\log \operatorname{X})^{14}  \log(k_rk_i) \log(k_r k_j)\,. 
    \end{align*}
    The same statements hold for $T'_{\ell_1,\ell_2,\ell_3}$ in place of $T_{k_1, k_2, k_3}$\,.
\end{lemma}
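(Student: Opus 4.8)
The plan is to reduce $T_{k_1,k_2,k_3}$ to products of one-dimensional character sums handled by \Cref{lemma:character_sum_without_tau_in_residue_class}, treating the cases $(k_1,k_2,k_3)=(1,1,1)$ and $(k_1,k_2,k_3)\ne(1,1,1)$ separately. In both cases I would first use that on the support of the sum $\mu^2(\ell_1\ell_2\ell_3)$ equals $\mu^2(\ell_1)\mu^2(\ell_2)\mu^2(\ell_3)$ times the indicator that $\ell_1,\ell_2,\ell_3$ are pairwise coprime, and, $f$ being multiplicative, $f(\ell_1\ell_2\ell_3)=f(\ell_1)f(\ell_2)f(\ell_3)$ there. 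Since the $k_i$ and $\ell_i$ are odd, each of $\legendre{\ell_1}{k_2k_3}$, $\legendre{\ell_2}{k_1k_3}$, $\legendre{\ell_3}{k_1k_2}$ is a real Dirichlet character in the corresponding variable, of modulus $k_2k_3$, $k_1k_3$, $k_1k_2$ respectively; it is principal precisely when that product is a perfect square, which, the $k_i$ being squarefree and pairwise coprime, happens only when both factors equal $1$. The congruences $\ell_ik_i\equiv\varepsilon_i\bmod 8$ place $\ell_i$ in a fixed unit class modulo $8$, and $(\ell_i,k_1k_2k_3)=1$ is coprimality to an odd modulus (the coprimality to $k_jk_r$ is automatically encoded by the vanishing of the Jacobi symbol).

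For $(k_1,k_2,k_3)=(1,1,1)$ all three Jacobi symbols are trivial, and I would expand the pairwise-coprimality indicator, $\sum_{a\mid(\ell_1,\ell_2)}\sum_{b\mid(\ell_1,\ell_3)}\sum_{c\mid(\ell_2,\ell_3)}\mu(a)\mu(b)\mu(c)$. After interchanging the order of summation the $\ell_i$-sums decouple (writing $\ell_1=[a,b]\ell_1'$, $\ell_2=[a,c]\ell_2'$, $\ell_3=[b,c]\ell_3'$ with each $\ell_i'$ squarefree and coprime to the extracted divisor), so
\[
T_{1,1,1}=\sum_{a,b,c}\mu(a)\mu(b)\mu(c)\,f([a,b])f([a,c])f([b,c])\,\Sigma_1\Sigma_2\Sigma_3 ,
\]
with $\Sigma_1=\sum_{\ell_1'\le X_1/[a,b]}\mu^2(\ell_1')f(\ell_1')$ over $\ell_1'$ in a fixed class mod $8$ coprime to $[a,b]$, and likewise $\Sigma_2,\Sigma_3$. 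The principal case of \Cref{lemma:character_sum_without_tau_in_residue_class} gives $\Sigma_1=\tfrac14 c(8[a,b])\,X_1/[a,b]+O(\tau(8[a,b])\sqrt{X_1/[a,b]}\log X_1)$; multiplying the three main terms, using $f(r)c(8r)/r=\tfrac34 c(1)\prod_{p\mid r}\tfrac1{p+2}$ for odd squarefree $r$, and summing the (now absolutely convergent) series over $a,b,c$ yields $\tfrac1{64}\bigl(\tfrac34\bigr)^3c(1)^3X_1X_2X_3\prod_{p>2}\bigl(1-\tfrac3{(p+2)^2}+\tfrac2{(p+2)^3}\bigr)=\tilde cX_1X_2X_3$ with $\tilde c$ as in \eqref{eq:def_c_tilde_T} — the Euler factor $1-\tfrac3{(p+2)^2}+\tfrac2{(p+2)^3}=(1-t_p)^2(1+2t_p)$, $t_p=\tfrac1{p+2}$, being exactly what the triple Möbius sum produces. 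The cross terms, with one $\Sigma_i$ replaced by its error, are summed trivially over $a,b,c$ using $\sum_{n\le Y}\tau(n)/n\ll(\log Y)^2$; after extracting $X_1X_2X_3$ this is the claimed $O\bigl((\log\operatorname{X})^{13}(X_1^{-1/2}+X_2^{-1/2}+X_3^{-1/2})\bigr)$.

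If $k_r\ne 1$ for some $r$, fix such an $r$ and let $\{i,j\}=\{1,2,3\}\setminus\{r\}$, so that $\chi_i=\legendre{\cdot}{k_jk_r}$ and $\chi_j=\legendre{\cdot}{k_ik_r}$ are both non-principal while $\chi_r=\legendre{\cdot}{k_ik_j}$ may be principal. Now I would Möbius out only the single coprimality $(\ell_i,\ell_j)=1$, keeping $(\ell_i,\ell_r)=1$ and $(\ell_j,\ell_r)=1$ explicit; for fixed $\ell_r$ and fixed $a\mid(\ell_i,\ell_j)$ the $\ell_i$- and $\ell_j$-sums then factor, each becoming a one-dimensional non-principal character sum to which the second case of \Cref{lemma:character_sum_without_tau_in_residue_class} applies, giving $\ll\tau(\cdot)\sqrt{k_jk_r}\log(k_jk_r)\sqrt{X_i/(k_ia)}\log X_i$ and $\ll\tau(\cdot)\sqrt{k_ik_r}\log(k_ik_r)\sqrt{X_j/(k_ja)}\log X_j$ respectively, while the $\ell_r$-sum is bounded trivially by $\ll X_r/k_r$. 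The arithmetic factors collapse, $\sqrt{k_jk_r}\cdot\sqrt{k_ik_r}\cdot(k_ik_j)^{-1/2}\cdot k_r^{-1}=1$, leaving, after summing over $a$ and $\ell_r$ (which converges up to powers of $\log$ thanks to the factor $a^{-1}$ and standard divisor-sum bounds), the estimate $\ll X_r\sqrt{X_iX_j}(\log\operatorname{X})^{14}\log(k_rk_i)\log(k_rk_j)$. Finally, for $T'_{\ell_1,\ell_2,\ell_3}$ the roles of the $k$'s and $\ell$'s are interchanged: after quadratic reciprocity each symbol becomes $\legendre{\ell_j\ell_r}{k_i}$, a real character in $k_i$ of modulus dividing $\ell_j\ell_r$ (the reciprocity sign, depending only on $k_i\bmod 4$, is folded into the class mod $8$), principal iff $\ell_j=\ell_r=1$, and the argument is otherwise identical. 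The main technical point, and the one requiring care, is precisely the bookkeeping in these reductions: disentangling the coprimality conditions so that each one-dimensional sum has its moduli, residue class and constant Jacobi factors in a shape to which \Cref{lemma:character_sum_without_tau_in_residue_class} applies directly, and then checking that the error terms genuinely sum to the claimed powers of $\log\operatorname{X}$.
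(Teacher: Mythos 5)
Your proposal is correct and follows essentially the same route as the paper: Möbius inversion over the pairwise gcd's of the $\ell_i$ (your $a,b,c$ are the paper's $d_{12},d_{13},d_{23}$) to decouple the sum into three one-dimensional sums, then \Cref{lemma:character_sum_without_tau_in_residue_class} in its principal case for the main term (with the same identity $f(r)c(8r)/r=\tfrac34c(1)\prod_{p\mid r}(p+2)^{-1}$ and the same Euler factor $1-3t_p^2+2t_p^3$) and in its non-principal case for the bound when some $k_r\neq 1$. The only cosmetic difference is that in the non-principal case you Möbius out a single coprimality condition while the paper expands all three, which does not change the argument or the resulting bound.
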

\begin{proof}
    We prove the statement for $T_{k_1, k_2, k_3}$. The proof for $T'_{\ell_1, \ell_2, \ell_3}$ works the same way. To lift the co-primality condition on the $\ell_i$, we sum over common divisors $d_{12}, d_{13}, d_{23}$ of the $\ell_i$. Here $d_{ij}$ denotes a common divisor of $\ell_i, \ell_j$. We have 
    \begin{align*}
        T_{k_1, k_2, k_3} &= \sum_{\substack{d_{ij} \leq \min(X_i, X_j)  \\ (d_{ij}, 2k_1k_2k_3)=1}} {\mu(d_{12})\mu(d_{13}) \mu(d_{23})}f(d_{12} d_{13}) f(d_{12}d_{23}) f(d_{13}d_{23}) \\
        & \qquad \qquad \qquad \legendre{[d_{12},d_{13}]}{k_2k_3} \legendre{[d_{12},d_{23}]}{k_1k_3} \legendre{[d_{13}, d_{23}]}{k_1k_2}\\
      &\qquad \qquad  \quad \sum_{\substack{\ell_i\leq X_i/(k_i [d_{ij}, d_{ik}]) \\ (\ell_i, d_{ij} d_{ik} ) =1 \\
      \ell_i \equiv k_id_{ij}d_{ik} \varepsilon_i \bmod 8}} {\mu^2(\ell_1)\mu^2(\ell_2)\mu^2(\ell_3)}  f(\ell_1)f(\ell_2)f(\ell_3) \legendre{\ell_1}{k_2k_3} \legendre{\ell_2}{k_1k_3} \legendre{\ell_3}{k_1k_2}\,. \numberthis \label{eq:T_k1k2k3_expanded}
    \end{align*}
    We are in a position to apply the lemmas from the previous section, as now the inner sums are independent of one another. We get savings in case there is a non-principal character present, i.e. in the case where $(k_1, k_2, k_3)\neq (1,1,1)$. \\
    We first consider the case where $k_1\neq 1$. Then the summation over $\ell_2$
   as well as the summation over $\ell_3$ will sum over a non-principal character, $\legendre{\cdot}{k_1k_3}$ and $\legendre{\cdot}{k_1k_2}$ respectively. Both of these sums can be bounded using \Cref{lemma:character_sum_without_tau_in_residue_class}. 
    We have 
    \begin{align*}
      \sum_{\substack{\ell_2 \leq X_2/ (k_2 [d_{12}, d_{23}]) \\ (\ell_2, d_{12}d_{23}) = 1 \\ \ell_2 = k_2 d_{12}d_{23} \varepsilon_2 \bmod 8}} {\mu^2(\ell_2)}
      f(\ell_2) \legendre{\ell_2}{k_1 k_3}  \ll \tau(d_{12}d_{23})\sqrt{\frac{k_1k_3X_2}{k_2 [d_{12}, d_{23}]}} \log \frac{X_2}{k_2 [d_{12}, d_{23}]} \log (k_1 k_3)
    \end{align*}
    and similarly 
         \[
      \sum_{\substack{\ell_3 \leq X_3/ (k_3 [d_{13}, d_{23}]) \\ (\ell_3, d_{13}d_{23}) = 1 \\ \ell_3 \equiv k_2 d_{13}d_{23} \varepsilon_3 \bmod 8}} {\mu^2(\ell_3)} f(\ell_3) \legendre{\ell_3}{k_1 k_2} \ll \tau(d_{13}d_{23})\sqrt{\frac{k_1k_2X_3}{k_3 [d_{13}, d_{23}]}} \log \frac{X_3}{k_3 [d_{13}, d_{23}]} \log( k_1 k_2) \,.
    \]
    Bounding the sum over the $\ell_1$ trivially by $\frac{X_1}{k_1 [d_{12} d_{13}]}$ we obtain 
    \begin{align}
         T_{k_1, k_2, k_3} &\ll X_1 \sqrt{ X_2X_3} \log(k_1k_2) \log(k_1 k_3) \log X_2 \log X_3 \cdot \nonumber \\
         & \qquad \cdot \sum_{\substack{d_{ij} \leq \min (X_i, X_j) }} \frac{\tau(d_{12} d_{23}) \tau(d_{13}d_{23})}{ \sqrt{[d_{12},d_{23}][d_{13},d_{23}]}[d_{12}, d_{13}]} \,.\label{eq:estimate_T_cancellation}
    \end{align}
    To estimate the summation over the $d_{ij}$ we use the crude estimate $\sqrt{[d_{12},d_{23}][d_{13},d_{23}]}[d_{12}, d_{13}] \leq d_{23} [d_{12}, d_{13}]$, which gives
    \begin{align*}
    \sum_{\substack{d_{ij} \leq \min (X_i, X_j) }} \frac{\tau(d_{12} d_{23}) \tau(d_{13}d_{23})}{ \sqrt{[d_{12},d_{23}][d_{13},d_{23}]}[d_{12}, d_{13}]} &\ll \sum_{\substack{d_{ij} \leq \min (X_i, X_j) }} \frac{\tau(d_{12}) \tau(d_{23})^2  \tau(d_{13})}{ d_{23}[d_{12}, d_{13}]} \\
    &\ll \log(\min (X_2, X_3))^4\sum_{\substack{d_{12}\leq \min (X_1, X_2) \\ d_{13} \leq \min (X_1, X_3)}} \frac{\tau(d_{12})\tau(d_{13})}{[d_{12}, d_{13}]}  \,.  
    \end{align*}
    Using again the identity $(d_{12},d_{13}) = \sum_{k\mid (d_{12}, d_{13})} \varphi(k)$, we can bound the remaining sum by 
    \begin{align*}
   \sum_{\substack{d_{12}\leq \min (X_1, X_2) \\ d_{13} \leq \min (X_1, X_3)}} \frac{\tau(d_{12})\tau(d_{13})}{[d_{23}, d_{13}]} &=  \sum_{k \leq \min (X_1, X_2, X_3)} \frac{\varphi (k) \tau(k)^2}{k^2} \sum_{\substack{d_{12} \leq \min (X_1, X_2) / k\\ d_{13} \leq \min (X_1, X_3)/k}} \frac{\tau(d_{23})\tau(d_{13})}{ d_{23} d_{13}} \\
    &\ll \log(\min (X_1, X_2))^2 \log (\min (X_1, X_3))^2 \sum_{k\leq \min(X_1, X_2, X_3)} \frac{\varphi(k)\tau(k)^2}{k^2} \\
    &\ll \log(\min (X_1, X_2))^2 \log (\min (X_1, X_3))^2 \log (\min(X_1, X_2, X_3))^4\,.
    \end{align*}
   Plugging this back into \eqref{eq:estimate_T_cancellation} gives 
    \begin{align}
        \label{eq:estimates_for_non_trivial_T}
          T_{k_1, k_2, k_3} \ll X_1 \sqrt{ X_2X_3} \log(k_1k_2) \log(k_1 k_3) (\log \operatorname{X})^{14}  
    \end{align}
    whenever $k_1 \neq 1$. The cases $k_2\neq 1$, $k_3 \neq 1$ follow the same argument.
    
    In the case where $(k_1, k_2, k_3) = (1,1,1)$, \eqref{eq:T_k1k2k3_expanded} reads
    \begin{align*}
             T_{1,1,1} &= \sum_{\substack{d_{ij} \leq \min(X_i, X_j)  \\ d_{ij} \text{ odd}}} {\mu(d_{12})\mu(d_{13}) \mu(d_{23})}  f(d_{12} d_{13}) f(d_{12}d_{23}) f(d_{13}d_{23}) \\
      &\qquad \qquad  \quad \sum_{\substack{\ell_i\leq X_i/ [d_{ij}, d_{ik}] \\ (\ell_i, d_{ij} d_{ik} ) =1 \\
      \ell_i \equiv d_{ij}d_{ik} \varepsilon_i \bmod 8}} {\mu^2(\ell_1)\mu^2(\ell_2)\mu^2(\ell_3)} f(\ell_1)f(\ell_2)f(\ell_3)\,. \numberthis \label{eq:T_111}
    \end{align*}
    The three inner sums are now independent and we can apply \Cref{lemma:character_sum_without_tau_in_residue_class} to evaluate them. These inner sums will equal product of three terms $G_{1},G_{2}$ and $ G_{3}$ where 
    \begin{align*}
    G_{i} &= \frac{c(8d_{ij}d_{ik})}{\varphi(8) [d_{ij}, d_{ik}]} X_i+ O\left(\tau(d_{ij} d_{ik})\sqrt{\frac{X_i}{[d_{ij},d_{ik}]}}\log \frac{X_i}{d_{ij}d_{ik}}\right) \\
    &=     \frac{c(8d_{ij}d_{ik})}{\varphi(8) [d_{ij}, d_{ik}]} X_i\left(1+  O\left(\frac{\log {X_i} \sqrt{[d_{ij}, d_{ik}]} \tau(d_{ij})\tau(d_{ik})}{\sqrt{X_i}} \right)\right)\,.  \numberthis \label{eq:def_G0}
    \end{align*}
    Hence we get
    \begin{align*}
        &T_{1, 1, 1} =  \sum_{\substack{d_{ij} \leq \min(X_i, X_j)\\ d_{ij} \text{ odd} }} \mu(d_{12})\mu(d_{13})\mu(d_{23}) f(d_{12}d_{13})f(d_{12}d_{23}) f(d_{13}d_{23}) G_{1}G_{2}G_{3}  \\
        &= X_1X_2X_3 \cdot \\
        &\cdot \sum_{\substack{d_{ij} \leq \min(X_i, X_j)\\ d_{ij} \text{ odd} }} \mu(d_{12})\mu(d_{13})\mu(d_{23})  \frac{f(d_{12}d_{13})f(d_{12}d_{23}) f(d_{13}d_{23}) c(8d_{12}d_{13})c(8d_{12}d_{23}) c(8d_{13}d_{23})}{ \varphi(8)^3 [d_{12}, d_{13}][d_{12}, d_{23}][d_{13}, d_{23}]}  \\
        &\cdot\Bigg(1+O\Bigg(\frac{\log {X_1} \sqrt{[d_{12}, d_{13}]} \tau(d_{12})\tau(d_{13})}{\sqrt{X_1}}+ \frac{\log {X_2} \sqrt{[d_{12}, d_{23}]} \tau(d_{12})\tau(d_{23})}{\sqrt{X_2}} +\\
        &\ \hspace{3cm}+\frac{\log {X_3} \sqrt{[d_{13}, d_{23}]} \tau(d_{13})\tau(d_{23})}{\sqrt{X_3}}\Bigg)\Bigg)\,.
    \end{align*}
We first deal with the error terms in this summation. For each $i\in \{1,2,3\}$ we have an error term of the form
\begin{align*}
O\Bigg(\frac{\log X_i}{\sqrt{ X_i}}\sum_{\substack{  d_{ij} \leq \min ({X_i, X_j}) \\ d_{ik} \leq \min ({X_i, X_k})\\d_{jk} \leq \min ({X_j, X_k})}} \frac{  \tau(d_{ij})\tau(d_{ik}) }{[d_{ij}, d_{jk}] [d_{ik}d_{jk}]\sqrt{[d_{ij}, d_{ik}]}}\Bigg) &= O\left( \frac{\log X_i (\log X_j)^6 (\log X_k)^6}{\sqrt{ X_i} } \right) \\
&= O \left(\log \operatorname{X})^{13} X_i^{-1/2} \right) \,.
\end{align*}

Using the same estimates as those following \eqref{eq:estimate_T_cancellation}. Hence 
    \begin{align*}
        T_{1, 1, 1} 
        &= X_1X_2X_3 \cdot \\
        &\cdot \sum_{\substack{d_{ij} \leq \min(X_i, X_j)\\ d_{ij} \text{ odd} }} \mu(d_{12})\mu(d_{13})\mu(d_{23})  \frac{f(d_{12}d_{13})f(d_{12}d_{23}) f(d_{13}d_{23}) c(8d_{12}d_{13})c(8d_{12}d_{23}) c(8d_{13}d_{23})}{ \varphi(8)^3 [d_{12}, d_{13}][d_{12}, d_{23}][d_{13}, d_{23}]}  \\
        &\cdot\Bigg(1+O\Bigg( (\log \operatorname{X})^{13} \left( X_1^{-1/2}+X_2^{-1/2}+X_3^{-1/2}\right)\Bigg)\Bigg)\,.
    \end{align*}
    \\ 
Moving on to the main term, recall the definition \eqref{eq:constant_c(r)} of $c(r)$, which gives
\[
c(8[d_{ij}, d_{ik}]) = \frac{3}{4} c(1) g(d_{ij} d_{ik})
\]
where $g(n)=\prod_{p\mid n}(p+1)/(p+2)$ and $c(1)$ from \eqref{eq:constant_c(r)}. 
We get
\begin{align*}
 T_{1,1,1}&= \left(\frac{3}{4}\right)^3 \frac{1}{\varphi(8)^3} c(1)^3 X_1X_2X_3 \\
&\sum_{\substack{d_{ij} \leq \min(X_i, X_j)\\ d_{ij} \text{ odd} }} \mu(d_{12})\mu(d_{13})\mu(d_{23}) \frac{f(d_{12}d_{13})f(d_{12}d_{23}) f(d_{13}d_{23}) g(d_{12}d_{13})g(d_{12}d_{23}) g(d_{13}f_{23})}{ [d_{12}, d_{13}][d_{12}, d_{23}][d_{13}, d_{23}]} \\
&\;\; \cdot \Bigg(1+O\Bigg( (\log \operatorname{X})^{13} \left( X_1^{-1/2}+X_2^{-1/2}+X_3^{-1/2}\right)\Bigg)\Bigg)\,.
\end{align*}
The summation over the $d_{ij}$ is now absolutely convergent and we may complete the summation. Notice further that $g, f,$ and the least common multiple are determined by a product over the prime factors of their respective arguments. We have
\begin{align*}
  T_{1,1,1} &= \left(\frac{3}{16}\right)^3 c(1)^3X_1X_2X_3 \sum_{ d_{ij} \text{ odd}  } \Bigg(\mu(d_{12})\mu(d_{13})\mu(d_{23}) \prod_{p\mid d_{12}d_{13}} \frac{(1+1/p)^{-1} (p+1)}{(p+2)\cdot p} \\
  &\quad \cdot\prod_{p\mid d_{12}d_{23}} \frac{(1+1/p)^{-1} (p+1)}{(p+2)\cdot p}\prod_{p\mid d_{13}d_{23}} \frac{(1+1/p)^{-1} (p+1)}{(p+2)\cdot p} \Bigg) \\
  &\quad \cdot \left(1+O\left((\log \operatorname{X})^{13} \left( X_1^{-1/2}+X_2^{-1/2}+X_3^{-1/2}\right)\right)\right)\\
  &= \left(\frac{3}{16}\right)^3 c(1)^3X_1X_2X_3 \sum_{\substack{ d_{ij} \text{ odd}  }} \mu(d_{12})\mu(d_{13})\mu(d_{23}) \prod_{p\mid d_{12}d_{13}} \frac{1}{p+2}\prod_{p\mid d_{12}d_{23}} \frac{1}{p+2}\prod_{p\mid d_{13}d_{23}} \frac{1}{p+2} \\
  &\cdot\Bigg(1+O\Bigg( (\log \operatorname{X})^{13} \left( X_1^{-1/2}+X_2^{-1/2}+X_3^{-1/2}\right)\Bigg)\Bigg)\,.
\end{align*}

The $d_{ij}$ sum can be written as an Euler product as
\begin{align*}
 T_{1,1,1} &= X_1X_2X_3\cdot \left(\frac{3}{16}\right)^3 c(1)^3\cdot \prod_{p>2} \left(1- \frac{3}{(p+2)^2}+\frac{2}{(p+2)^3}\right)\\
&\qquad \cdot\Bigg(1+O\Bigg( (\log \operatorname{X})^{13} \left( X_1^{-1/2}+X_2^{-1/2}+X_3^{-1/2}\right)\Bigg)\Bigg) \,.\numberthis \label{eq:T_111_constant}
\end{align*}
This completes the proof.
\end{proof}
Recall that we still have to sum \eqref{eq:sum_over_T_k} and \eqref{eq:sum_over_T'_ell} over all admissible choices of $\boldsymbol{\delta, \varepsilon}$. 
\begin{lemma} \label{lemma:rational_constant}
We have
    $$\sum_{\substack{\bm\delta \in \{\pm1\}^2 \\ (\delta_2 , \delta_3)\neq (-1, -1)}} \sum_{\substack{\mu,\alpha,\beta \in \{0,1\}\\ \mu+\alpha+\beta\leq 1}} \sum_{\substack{\bm \varepsilon \in ((\ZZ/8\ZZ)^{\times})^3\\ (\varepsilon_1, \delta_2\varepsilon_2, \delta_3\varepsilon_3)\in E(\mu, \alpha, \beta)}} u(1, 1 , 1)= 432 $$
    and
        $$\sum_{\substack{\bm\delta \in \{\pm1\}^2 \\ (\delta_2 , \delta_3)\neq (-1, -1)}} \sum_{\substack{\mu,\alpha,\beta \in \{0,1\}\\ \mu+\alpha+\beta\leq 1}} \sum_{\substack{\bm \varepsilon \in ((\ZZ/8\ZZ)^{\times})^3\\ (\varepsilon_1, \delta_2\varepsilon_2, \delta_3\varepsilon_3)\in E(\mu, \alpha, \beta)}} u(\varepsilon_1, \varepsilon_2, \varepsilon_3)=432 \,.$$   
\end{lemma}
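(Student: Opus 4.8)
The plan is to reduce both identities to elementary $\mathbb{Z}/2\mathbb{Z}$-arithmetic on $(\mathbb{Z}/8\mathbb{Z})^{\times}$. For an odd integer $n$, let $a(n),b(n)\in\mathbb{Z}/2\mathbb{Z}$ be the exponents with $\legendre{-1}{n}=(-1)^{a(n)}$ and $\legendre{2}{n}=(-1)^{b(n)}$. Since $\legendre{-1}{\cdot}$ and $\legendre{2}{\cdot}$ are completely multiplicative and depend only on $n\bmod 8$, the map $n\mapsto(a(n),b(n))$ is a group isomorphism $(\mathbb{Z}/8\mathbb{Z})^{\times}\cong(\mathbb{Z}/2\mathbb{Z})^{2}$; in particular $a$ and $b$ are homomorphisms. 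I would also record the three elementary facts that $\eta(x)\eta(y)\equiv a(x)a(y)\pmod 2$ (both sides detect $x\equiv y\equiv 3\bmod 4$), that $a(\delta)=\tfrac{1-\delta}{2}$ for $\delta\in\{\pm1\}$, and that $b(\delta)=0$ for $\delta\in\{\pm1\}$.

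Next I would rewrite the two relevant objects in these coordinates. Write $\varepsilon_i\leftrightarrow(a_i,b_i)$, $d_2=\tfrac{1-\delta_2}{2}$, $d_3=\tfrac{1-\delta_3}{2}$. Expanding the definition of $u(\mathbf k)$ at $\mathbf k=(\varepsilon_1,\varepsilon_2,\varepsilon_3)$ using the facts above gives $u(\varepsilon_1,\varepsilon_2,\varepsilon_3)=(-1)^{\Phi}$ with $\Phi=a_1a_2+a_1a_3+a_2a_3+a_1+\mu(b_2+b_3)+d_2(a_1+a_3)+d_3(a_1+a_2)+\alpha(b_1+b_3)+\beta(b_1+b_2)$. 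On the other hand, by the proof of \Cref{lemma:local_conditions}, the condition $(\varepsilon_1,\delta_2\varepsilon_2,\delta_3\varepsilon_3)\in E(\mu,\alpha,\beta)$ is precisely the vanishing mod $2$ of the $2$-adic Hilbert exponent at the residues $\varepsilon_1,\delta_2\varepsilon_2,\delta_3\varepsilon_3$; using $a(\delta_j)=d_j$, $b(\delta_j)=0$ and the homomorphy of $a,b$, this becomes $C\equiv 0$, where $C=(a_1+a_2+d_2)(a_1+a_3+d_3)+(\mu+\beta)(b_1+b_2)+(\mu+\alpha)(b_1+b_3)$.

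The crux is the (purely multilinear, hence short) computation $C-\Phi\equiv d_2d_3\pmod 2$. As the sum in the lemma runs over $(\delta_2,\delta_3)\neq(-1,-1)$, i.e. $d_2d_3=0$, this yields $C\equiv\Phi$; hence $u(\varepsilon_1,\varepsilon_2,\varepsilon_3)=(-1)^{\Phi}=(-1)^{C}=1$ on the entire index set of the sum (where $C\equiv0$), and trivially $u(1,1,1)=1$. Both sums therefore equal $\#\{(\boldsymbol\delta,\boldsymbol\nu,\boldsymbol\varepsilon):(\delta_2,\delta_3)\neq(-1,-1),\ \mu+\alpha+\beta\le1,\ (\varepsilon_1,\delta_2\varepsilon_2,\delta_3\varepsilon_3)\in E(\mu,\alpha,\beta)\}$. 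For fixed $\boldsymbol\delta,\boldsymbol\nu$ the map $\boldsymbol\varepsilon\mapsto(\varepsilon_1,\delta_2\varepsilon_2,\delta_3\varepsilon_3)$ is a bijection of $((\mathbb{Z}/8\mathbb{Z})^{\times})^{3}$, so the inner count is $|E(\mu,\alpha,\beta)|$, independent of $\boldsymbol\delta$; thus the total is $3\big(|E(0,0,0)|+|E(1,0,0)|+|E(0,1,0)|+|E(0,0,1)|\big)$. In the $(\mathbb{Z}/2\mathbb{Z})^{6}$-model each $E(\boldsymbol\nu)$ is the zero set of an explicit quadratic form, and counting gives $|E(0,0,0)|=48$ and $|E(1,0,0)|=|E(0,1,0)|=|E(0,0,1)|=32$ (the last two by the $\alpha\leftrightarrow\beta$, $b_2\leftrightarrow b_3$ symmetry), so the total is $3\cdot144=432$.

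The only real obstacle is bookkeeping: reducing $u$ and the $E$-membership condition to the correct $\mathbb{Z}/2\mathbb{Z}$-quadratic forms — keeping the quadratic-reciprocity $\eta$-factors, the $\delta_j$-twists, and the roles of $\mu,\alpha,\beta$ mutually consistent — and then verifying $C-\Phi\equiv d_2d_3$. One should also silently correct the evident typo in the displayed definitions, where $E(0,0,1)$ appears mislabeled as a second copy of $E(0,1,0)$. Once the two forms are written down correctly, the rest is a one-line calculation.
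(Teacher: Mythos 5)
Your proposal is correct and follows essentially the same route as the paper: both proofs reduce to showing that $u(\boldsymbol{\varepsilon})=1$ on the whole index set because $u$ coincides (for $(\delta_2,\delta_3)\neq(-1,-1)$) with the $2$-adic Hilbert symbol whose triviality defines $E(\mu,\alpha,\beta)$, and then both count $3\cdot(48+32+32+32)=432$. Your identity $C-\Phi\equiv d_2d_3\pmod 2$ is just the additive $\mathbb{F}_2$-coordinate version of the paper's computation $u(a_1,a_2,a_3)=(2^{\mu+\alpha}\delta_2a_1a_2,\,2^{\mu+\beta}\delta_3a_1a_3)_2$, where the same term $\eta(\delta_2)\eta(\delta_3)$ is what forces the exclusion of $(\delta_2,\delta_3)=(-1,-1)$; and you correctly flag the typo in the displayed definition of $E(0,0,1)$.
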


\begin{proof}
    Observe that $u(1,1,1)=1$, so that we can directly get the first statement by counting the number of cases given by each sum, which are $3\cdot (48+32+32+32)$, coming from the 3 possible values for $\boldsymbol{\delta}$ and the respective number of elements in $E(\mu, \alpha, \beta)$. This holds for the second statement as well, since $u(\varepsilon_1, \varepsilon_2, \varepsilon_3)=1$ for all  $ (\varepsilon_1, \delta_2\varepsilon_2, \delta_3\varepsilon_3)\in E(\mu, \alpha, \beta)$. To see this, observe that we can rewrite $u(\boldsymbol{a})$ in terms of Hilbert symbols at $2$: $$u(\boldsymbol{a})=u(a_1,a_2,a_3)=(2^{\mu+\alpha}\delta_2a_1a_2,2^{\mu+\beta}\delta_3a_1a_3)_2\,. $$ 
    We prove this claim by expanding the right sight of the equation. Let $\eta(x) = (x-1)/2$ and $\omega(x) = (x^2-1)/8$. Then the known formula for the Hilbert symbol at 2 gives
    \begin{align*}
    (2^{\mu+\alpha}\delta_2a_1a_2,2^{\mu+\beta}\delta_3a_1a_3)_2 &= (-1)^{\eta(\delta_2 a_1 a_2) \eta(\delta_3 a_1 a_3) + (\mu+\alpha)\omega(\delta_3a_1a_3) + (\mu+\beta) \omega(\delta_2 a_1a_2)}\\
    &= (-1)^{\eta(\delta_2 a_1 a_2) \eta(\delta_3 a_1 a_3) } \legendre{2^{\mu+\beta}}{\delta_2 a_1a_2} \legendre{2^{\mu+\alpha}}{\delta_3a_1a_3} \\
    &=(-1)^{\eta(\delta_2 a_1 a_2) \eta(\delta_3 a_1 a_3) } \legendre{2^{\mu}}{ a_2a_3}\legendre{2^{\beta}}{a_1a_2} \legendre{2^{\alpha}}{a_1a_3} \\
    &=(-1)^{\eta(\delta_2 a_1 a_2) \eta(\delta_3 a_1 a_3) } \legendre{\delta_3}{a_1a_2} \legendre{\delta_2}{a_1a_3}\legendre{2^{\mu}}{ a_2a_3}\legendre{\delta_32^{\beta}}{a_1a_2} \legendre{\delta_22^{\alpha}}{a_1a_3} \numberthis \label{eq:hilber2_umf} \,.
    \end{align*}
    We focus on the first part 
    \[
    (-1)^{\eta(\delta_2 a_1 a_2) \eta(\delta_3 a_1 a_3) } \legendre{\delta_3}{a_1a_2} \legendre{\delta_2}{a_1a_3}\,.
    \]
    Using $\eta(ab)=\eta(a)+\eta(b) \pmod 2$, we have
    \begin{align*}
      \ &\  (-1)^{\eta(\delta_2 a_1 a_2) \eta(\delta_3 a_1 a_3) } \legendre{\delta_3}{a_1a_2} \legendre{\delta_2}{a_1a_3}= \\
         &= (-1)^{\eta(a_1) + \eta(a_1)\eta(a_2) + \eta(a_1)\eta(a_3) + \eta(a_2)\eta(a_3) + \eta(\delta_2)\eta(a_1a_3) + \eta(\delta_3)\eta(a_1 a_2) + \eta(\delta_2)\eta\delta_3) } \legendre{\delta_3}{a_1a_2} \legendre{\delta_2}{a_1a_3}\\
        &= (-1)^{\eta(a_1 )\eta(a_2) + \eta(a_1)\eta(a_3) + \eta(a_2)\eta(a_3)} \legendre{-1}{a_1}\,.
    \end{align*}
    Where we used 
    \[
\legendre{\delta_3}{a_1a_2} \legendre{\delta_2}{a_1a_3}  =     (-1)^{ \eta(\delta_2)\eta(a_1a_3) + \eta(\delta_3)\eta(a_1 a_2) }  \,
    \]
    and $\eta(\delta_2)\eta(\delta_3)=+1$ for $(\delta_2, \delta_3)\neq (-1, -1)$. Plugging this back into \eqref{eq:hilber2_umf} gives 
    \[
    (2^{\mu+\alpha}\delta_2a_1a_2,2^{\mu+\beta}\delta_3a_1a_3)_2 = (-1)^{\eta(a_1)\eta(a_2) + \eta(a_1)\eta(a_3) + \eta(a_2)\eta(a_3)}  \legendre{-1}{a_1}\legendre{2^{\mu}}{ a_2a_3}\legendre{\delta_32^{\beta}}{a_1a_2} \legendre{\delta_22^{\alpha}}{a_1a_3} 
    \]
    which is the definition of $u(a_1, a_2, a_3)$. Now $(\varepsilon_1, \delta_2\varepsilon_2, \delta_3\varepsilon_3) \in E(\mu, \alpha, \beta)$ implies that \\$(2^{\mu+\alpha}\delta_2a_1a_2,2^{\mu+\beta}\delta_3a_1a_3)_2 = 1$ by definition of the $E(\mu, \alpha, \beta).$
\end{proof}

We can now prove \Cref{main_thm}.
\begin{proof}[Proof of \Cref{main_thm}] \label{pf:main_theorem}
Recall from \eqref{eq:final_thing_to_count} that we want to count
 \begin{align*}
 \mathfrak{M}(\bm{X})= 4\sum_{\substack{\boldsymbol{\delta} \in \{-1,1\}^2\\ \boldsymbol{\delta} \neq (-1, -1)}} \sum_{\substack{\bm{\nu} \in \{0,1\}^3\\ \bm{\nu}\cdot \bm{1}\leq 1}} \sum_{\substack{\boldsymbol{\varepsilon} \in ((\ZZ/8\ZZ)^{\times})^3\\ (\varepsilon_1, \delta_2\varepsilon_2, \delta_3 \varepsilon_3) \in E(\bm{\nu})}} T(\bm{\varepsilon},\boldsymbol{\delta},\bm{\nu})\,, 
\end{align*}   
with $T(\bm{\varepsilon},\boldsymbol{\delta},\bm{\nu})$ given by \eqref{eq:T_alpha_reformulated}
\begin{align*}
    T(\bm{\varepsilon},\boldsymbol{\delta},\bm{\nu}) = \frac{1}{2} \prod_{p>2} \left(1-\frac{1}{p^2}\right) {X_4} \sum_{k_i \leq X_i} \sum_{\substack{\ell_i \leq X_i/k_i \\ k_i\ell_i \equiv \varepsilon_i \bmod{8}}}& \Bigg( u(\mathbf{k}){\mu^2(m_1'm_2'm_3')} \legendre{\ell_1}{k_2k_3} \legendre{\ell_2}{k_1k_3} \legendre{\ell_3}{k_1k_2} \\
    \prod_{p\mid m_1'm_2' m_3'} \left(1+\frac{1}{p}\right)^{-1} \Bigg)
& + O\left( X_1X_2X_3\sqrt{X_4} (\log \operatorname{X})^{3}\right) \,.
\end{align*}
where we recall $\operatorname{X} = \max_{j=1,2,3}\{ X_j\}$ from \eqref{eq:def_of_X}. \\
In \Cref{subsec:large_regions} we saw that all regions except for $k_1, k_2, k_3\leq V$ and $\ell_1, \ell_2, \ell_3\leq V$ give a negligible contribution of
\begin{align}
O\bigg(  X_1 X_2 X_3 V^{-1/6}(\log \operatorname{X})^{16/3} \bigg) \,. \label{eq:V^(-1/6)}    
\end{align}
to the summation over $k_1, k_2, k_3, \ell_1,\ell_2, \ell_3$. We will make the choice of $V=(\log \operatorname{X})^{\frac{6A-58}{19}}$ with $A$ from \Cref{main_thm}. This allows us to bound the above \eqref{eq:V^(-1/6)} by
\[
O\left( X_1X_2X_3( \log \operatorname{X})^{\frac{111-A}{19}} \right) \,.
\]
We use \Cref{lemma:bounds_on_T_k} to handle the regions $k_1, k_2, k_3\leq V$ and $\ell_1, \ell_2, \ell_3\leq V$. We see that the sum for the terms $(k_1, k_2, k_3) ,(\ell_1,\ell_2, \ell_3)\neq (1,1,1)$ contribute cumulatively at most 
\begin{align}
&O\bigg( X_1 X_2 X_3 V^3 (\log \operatorname{X}) ^{14} \log(V)^2 \left((X_1X_2)^{-1/2} + (X_1X_3 )^{-1/2}+(X_2X_3)^{-1/2}\right) \bigg) \nonumber \\
&\quad = O\left(X_1 X_2 X_3 (\log \operatorname{X}) ^{\frac{18A+92}{19}} (\log V)^2  \left((X_1X_2)^{-1/2} + (X_1X_3 )^{-1/2}+(X_2X_3)^{-1/2}\right) \right) \nonumber \\
&\quad = O\left(X_1 X_2 X_3 (\log \operatorname{X}) ^{\frac{18A + 111}{19}}  \left((X_1X_2)^{-1/2} + (X_1X_3 )^{-1/2}+(X_2X_3)^{-1/2}\right) \right)\label{eq:error_two}    
\end{align}
with our choice of $V$. Using the assumption $X_i \gg \log(\max_{j=1,2,3}\{X_j\})^{A} = (\log \operatorname{X})^{A}$ for $i=1,2,3$ from \Cref{main_thm}, we obtain \[
(X_i X_j)^{-1/2} \ll (\log \operatorname{X})^{-A}\,.
\]
Which allows us to bound \eqref{eq:error_two} by the same
\[
O\left( X_1 X_2 X_3 (\log \operatorname{X})^{\frac{111-A}{19}}  \right)\,.
\]
It remains to evaluate the terms for $(k_1, k_2, k_3) = (1,1,1)$, $(\ell_1, \ell_2, \ell_3)=(1,1,1)$. Using the first part of \Cref{lemma:bounds_on_T_k} we have 
\begin{align*}
      T(\bm{\varepsilon},\boldsymbol{\delta},\bm{\nu}) &=  \frac{1}{2} \prod_{p>2} \left(1-\frac{1}{p^2}\right) {X_4} (u(1,1,1) + u(\boldsymbol{\varepsilon})) {  \cdot \tilde{c} \cdot X_1X_2X_3}  \\
      &\qquad \cdot \left(1 + O\left( (\log \operatorname{X})^{13} \left( X_1^{-1/2}+X_2^{-1/2}+X_3^{-1/2}\right) + X_4^{-1/2}(\log \operatorname{X})^{3} \right) \right) \,.
\end{align*}
In particular $\tilde{c}$ is given by \eqref{eq:def_c_tilde_T}. \Cref{lemma:rational_constant}  shows that  the sums for $u(\boldsymbol{\varepsilon})$ as well as $u(1,1,1)$, both equal $2^{4}\cdot 3^3$. 
We again bound the error term using the assumption from \Cref{main_thm}. We have 
\[
O\left((\log \operatorname{X})^{13} \left( X_1^{-1/2}+X_2^{-1/2}+X_3^{-1/2}\right)\right) = O\left(\log \operatorname{X})^{13-A/2}\right) = O\left( (\log \operatorname{X})^{\frac{111-A}{19}}\right)\,,
\]
for $A>16$, as well as 
\[
O\left(X_1X_2X_3X_4^{-1/2}(\log \operatorname{X})^{3} \right) = O\left( \log \operatorname{X})^{-\frac{A}{2}+3} \right) = O\left( (\log \operatorname{X})^{\frac{111-A}{19}} \right) \,.
\]
Hence we have 
\begin{align*}
\mathfrak{M}(\bm{X})&=4 \cdot  (432+432)\cdot \tilde{c} \cdot X_1 X_2 X_3 \cdot \frac{1}{2}  \prod_{p>2} \left(1-\frac{1}{p^2}\right) {X_4} \cdot \left(1 + O\left((\log \operatorname{X})^{\frac{111-A}{19}} \right)\right)\\ 
&=1728\cdot \tilde{c} \cdot  \prod_{p>2} \left(1-\frac{1}{p^2}\right) \cdot X_1 X_2 X_3X_4   \cdot \left(1 + O\left((\log \operatorname{X})^{\frac{111-A}{19}} \right)\right)\,.
\end{align*}
With the constant $\tilde{c}$ given by \eqref{eq:def_c_tilde_T} 
\[
\tilde{c} =  \left(\frac{3 }{16}\right)^3 c(1)^3 \cdot \prod_{p>2} \left(1- \frac{3}{(p+2)^2}+\frac{2}{(p+2)^3}\right) \,,
\]
with $c(1)$ given in \eqref{eq:constant_c(r)}.
A short calculation reveals 
\begin{align*}
1728\cdot \tilde{c} \cdot\prod_{p>2} \left(1-\frac{1}{p^2}\right) &=1728\cdot  \left(\frac{3}{16}\right)^3   c(1)^3  \cdot  \prod_{p>2} \left(1- \frac{3}{(p+2)^2}+\frac{2}{(p+2)^3}\right) \prod_{p>2} \left(1-\frac{1}{p^2}\right)\\
&=\frac{729}{64}\left(\frac{2}{3}\right)^3\prod_{p>2} \left( 1-\frac{2}{p(p+1)} \right)^3 \cdot  \prod_{p>2} \left(1- \frac{3}{(p+2)^2}+\frac{2}{(p+2)^3}\right)  \\
 &\qquad \qquad\quad \cdot \prod_{p>2} \left(1-\frac{1}{p^2}\right)\\ 
&= \frac{27}{8} \prod_{p>2} \left(1-\frac{1}{p}\right)^4 \left(1+\frac{4}{p} \right)\,.
\end{align*}
Hence, we have 
\begin{align*}
   \mathfrak{M}(\bm{X}) 
 &= {\frac{27}{8}}\prod_{p>2} \left(1-\frac{1}{p}\right)^4 \left(1+\frac{4}{p}  \right) X_1 X_2 X_3X_4  \cdot \left(1 + O\left((\log \operatorname{X})^{\frac{111-A}{19}}   \right)\right)\,. 
\end{align*}
\end{proof}

\section{The leading constant in Loughran and Santens }  \label{sec:constant}
Loughran and Santens \cite{loughran2025mallesconjecturebrauergroups} give a prediction for the leading constant in Malle's conjecture. They obtain an analogue to a conjecture of Peyre (\cite{peyre95}) for the leading constant in Manin's conjecture. Counting field extensions by multi-invariants closely resembles the count of rational points by multi-heights, introduced by Peyre \cite{peyre-multi-heights}. Although the work from \cite{loughran2025mallesconjecturebrauergroups} does not consider counting with multi-heights, we want to compare our constant in \Cref{main_thm} to that of \cite[Conjecture 9.1]{loughran2025mallesconjecturebrauergroups} for counting $D_4$-fields of degree 8 by product of ramified primes.

Abusing notation slightly, let $D_4$ be the constant \'etale group scheme over $\QQ$ associated to $D_4$. Let $BD_4(\QQ)$ be the groupoid of (continuous) homomorphisms $\Gamma_\QQ\to D_4$, where $\Gamma_\QQ = \operatorname{Gal}(\overline{\QQ}/\QQ)$ with morphisms given by conjugation. \\
Let $\mathcal{C}_G$  be the set of conjugacy classes of $D_4$ and $\mathcal{C}_G^*$ the non-trivial ones. We define a collection of local height functions  (see \cite[Section 8.1]{loughran2025mallesconjecturebrauergroups}) $H_v:BG(\QQ_v)\to \mathbb{R}_{>0}$ for all tamely ramified finite places $v$ via 
\begin{align}
H_v(\varphi_v) = \begin{cases}
    1\,, \quad &\text{if $v$ is unramified}\\
    q_v\,, \quad &\text{if $v$ is ramified}
\end{cases}
\label{eq:local_heights_def}    
\end{align}
where $q_v$ denotes the residue characteristic at the place $v$. We denote $L$ the line bundle attached to this height function. In the setting of Malle's conjecture these height functions corresponds to the radical discriminant (up to wildly ramified primes), i.e. product of tamely ramified primes. Note that this height function is in close analogy to the anti-canonical height in Manin's conjecture.  \\

Conjecture 1.3 in \cite{loughran2025mallesconjecturebrauergroups} predicts for Malle’s conjecture, that the number of surjective $\varphi \in BG(k) \setminus \Omega$ (where $\Omega$ is a suitable thin set) with height bounded by $B$ grows like 
\begin{align}
 \frac{1}{|G|}\#\{\varphi \in BG(\QQ)\setminus  \Omega &: H(\varphi) \leq B\} \nonumber  \\ 
 &\sim  \frac{1}{(b(k,L) -1)!}{\alpha^*(BG, L) \beta(BG, L) \tau(BG(\mathbf{A}_k)^{\operatorname{Br}}) }B ^{a} (\log B )^{b(k, L)-1} \,,
\label{eq:leading_constant_L.-S.-height}    
\end{align} where $N, C\subseteq S_n$ denote the centralizer and normalizer of $G$ in $S_n$, the symmetric group on $n$ elements. The specific constants are
\[
\alpha^*(BG, L) = \frac{a(L)^{b(k, L)-1}}{\#\widehat{G}(k)} \, , \quad  \beta(k, L) = |\operatorname{Br}_{\mathcal{M}(L)} BG/ \operatorname{Br} k|
\]
and $\tau(BG(\mathbf{A}_k)^{\operatorname{Br}})$ the Tamagawa measure defined in \cite[Section 8.4]{loughran2025mallesconjecturebrauergroups}. Here $a(L), b(k, L)$ and $\mathcal{M}(L)$ are defined as  
\begin{align*}
    &a(L)=(\min_{c\in\mathcal{C}^*_G} w(c))^{-1}, \\ &\mathcal{M}(L)=\{c\in \mathcal{C}^*_G: w(c)=a(L)^{-1} \}, \\ &b(k,L)=\#\mathcal{M}(L)/\Gamma_k,
\end{align*}
with $\widehat{G}$ the character group and $w(c)$ a weight function that associates a weight to every conjugacy class $c$. \\

In the case of counting $D_4$-extensions by multi-invariants, we use the local height functions given by  \eqref{eq:local_heights_def} for all odd $p$ and set the height at the prime $2$ as $1$, since wildly ramified primes do not appear in the definition of Gundlach's multi-invariants. In \Cref{main_thm} we count $D_4$ Galois extensions with isomorphisms $\sigma: \operatorname{Gal}(M/\QQ ) \to D_4$, this equivalent to counting surjective $\varphi \in BD_4(\QQ)$ with all the heights being bounded by a parameter $X_i$. We hence compare our leading constant to a suitable variant of  \eqref{eq:leading_constant_L.-S.-height}.   We have $w(c)=1$ across all non-trivial conjugacy classes. Hence, in our case $a(L)=1$ and $b(k,L)=4$ (we have $4$ non-trivial conjugacy classes and they are all fixed by the action of the Galois group). The normalizer of $D_4$ inside $S_8$ has cardinality $64$, whereas the centralizer has cardinality $8$. Hence $|C||G|/|N|=1$. We have $|\operatorname{Br}_{\mathcal{M}(L)} BD_4/ \operatorname{Br} \QQ| =1$ and $\#\widehat{D_4}(\QQ) = 4$ (see \cite[Lemma 10.9]{loughran2025mallesconjecturebrauergroups}). Hence
$\alpha^*(BG, L) = \frac{1}{4}, \beta(\QQ, L) = 1$. Lastly, we compare the Tamagawa measure. By \cite[Lemma 8.19]{loughran2025mallesconjecturebrauergroups} we have 
\begin{align}
    \tau_H= \tau_{H, \infty} \prod_{p} \left(1-\frac{1}{p}\right)^{4} \tau_{H,p},
\label{eq:global_tam_measure}
\end{align}
with the local Tamagawa numbers  $\tau_{H,v}$ for all  places $v$. Using \cite[Cor. 8.18]{loughran2025mallesconjecturebrauergroups} we have for all tamely ramified non-archimedean places, i.e. all odd primes,  
$$\tau_{H, p}=\sum_{c\in\mathcal{C}^{\Gamma_{k_v}}_{G}}p^{-w(c)} = 1+\frac{4}{p}.$$ 
Since $\Gamma_{k_v}$ acts trivially and we have $4$ conjugacy classes plus the trivial one. For the real place we use \cite[Eq. 8.3]{loughran2025mallesconjecturebrauergroups} to get 
\[
\tau_{\infty}  = \frac{6}{8} = \frac{3}{4}\,,
\]
since all elements but the rotation by $90$ and $270$ degrees are two-torsion elements.  
For $p=2$ we directly compute the Tamagawa number. We have (see \cite[Eq. 8.2]{loughran2025mallesconjecturebrauergroups}) 
 \[
\tau_{H,2} (BD_4(\QQ_2)) = \frac{1}{|D_4|} \sum_{\varphi\in B D_4(\QQ_2)} \frac{1}{|H_2(\varphi)|} = \frac{1}{8} \sum_{\varphi\in B D_4(\QQ_2)} 1 \,,
\]
as in our case we set the height at $2$ to be constant 1. We follow the approach of \cite[Section 5]{wood} which relates this count to a weighted count of isomorphism classes of octic \'etale algebras with Galois closure as a subgroup of $D_4$. Set $D_4= \langle r, s\rangle \subset S_8$ with $r=(1234)(5678)$ and $s=(15)(28)(37)(46)$, so that $D_4$ is a transitive subgroup of $S_8$. For $\varphi: \Gamma \to D_4$, let $I = \operatorname{im}(\varphi) \subseteq D_4$. Let $k = |C_{S_8}(I)|$ the size of the centralizer of $I$ in $S_8$ and $j= \#\{s\in S_8: sIs^{-1}\subseteq D_4\}$. 
We have the following possibilities for $I \subseteq D_4$ as an image of $\varphi$
\begin{table}[H]
\centering
\begin{tabular}{c|c|c|c}
$I$ & {\'Etale Algebra} & $k$ & {\( j \)} \\
\hline \hline
() & $\QQ_p^8$ & 40320 & 40320 \\\hline 
$\langle (13)(24)(57)(68)\rangle $, 
$\langle (15)(28)(37)(46)\rangle$ ,  & $K^4$, $K$ quadratic field & 384 & 1920 \\
$\langle (17)(26)(35)(48)\rangle$, 
$\langle (16)(25)(38)(47)\rangle $ or &&& \\
$\langle (18)(27)(36)(45)\rangle $ &&& \\ \hline
$\langle (13)(24)(57)(68), (15)(28)(37)(46)\rangle$ & $L^2$, $L$ $V_4$ field & 32 & 384\\
$\langle (13)(24)(57)(68), (16)(25)(38)(47)\rangle$ & && \\ \hline 
$\langle (1234)(5678) \rangle$ & $L^2$, $L$ $C_4$ field & 32 & 64 \\ \hline
$D_4$ & $M$,  $D_4$ field of degree 8 & 8 & 64 
\end{tabular}
\end{table}
We can use this to sum over isomorphism classes of \'etale algebras. Every isomorphism of \'etale algebras has $\frac{j}{k}$ many homomorphism $\varphi \in BD_4(\QQ_2)$. We find a complete list of $2$-adic field extensions at \cite{lmfdb}. Using this we get
\begin{align*}
\tau_{H, 2}(BD_4(\QQ_2) &= \frac{1}{8}\Bigg(1+ 5\sum_{\substack{K^4\\ K/\QQ_2\text{ quadratic}}}1 + 12\sum_{\substack{L^2\\ L/\QQ_2\text{ $V_4$-extension}}}1 \\ 
&\qquad\qquad+ 2\sum_{\substack{L^2\\L/\QQ_2\text{ $C_4$-extension}}}1 +8\sum_{\substack{M\\ M/\QQ_2\text{ $D_4$-extension}}}1 \Bigg)   \\ 
&= \frac{1}{8} \left(1+5\cdot 7 + 12\cdot 7 + 2\cdot 12 + 8\cdot 18 \right) \\
&= 36\,.
\end{align*}
 Hence we obtain for the global Tamagawa measure in \eqref{eq:global_tam_measure}
\[
\tau_{H} = \frac{3}{4} \cdot \frac{36}{16}\cdot \prod_{p>2} \left(1-\frac{1}{p} \right)^{-4} \left(1+\frac{4}{p} \right) = \frac{27}{16}\prod_{p>2} \left(1-\frac{1}{p} \right)^{-4} \left(1+\frac{4}{p} \right)\,.
\]
Here we remark that we have to include the convergence factor $\left(1-\frac{1}{2}\right)^{4}$ at $2$, resulting in $\frac{36}{16}$\,.

We expect the effective cone constant $\frac{\alpha^*}{(b-1)!}$ to change to simply $\alpha^*$ in the setting of counting by multi-heights. In Manin's conjecture we have an effective cone constant $\alpha(V)$ defined in \cite[p. 20]{peyre-multi-heights}. In the notion of \cite[Def. 3.33]{loughran2025mallesconjecturebrauergroups} we have $\alpha = \frac{\alpha^*}{(b-1)!}$.  Comparing this to the analogue in multi-height counting, see \cite[Notation 4.7, Question 4.8]{peyre-multi-heights}, we see that the expected asymptotic differs by a factor of $(b-1)!$. Informally, we can see why this change in the  $\alpha$ constant makes sense by noting that we count in rectangular boxes, whereas in the usual setting one counts in hyperbolic regions. The following example illustrates this phenomena. We have
\[
\sum_{x_1 \leq X_1,\dots, x_b \leq X_b } 1 \sim X_1 \dots X_b
\]
whereas 
\[
\sum_{x_1 \cdots x_b \leq X} 1 \sim \frac{1}{(b-1)!} X (\log X)^{b-1} \,.
\]
We conclude by noting that a variant of Loughran and Santens' prediction for the leading constant of the count by multi-invariants would look like
\begin{align*}
 \frac{1}{8} \# \{\varphi \in B D_4(\QQ) : H_i (\varphi) \leq X_i , \text{ for } i=1,2,3,4 \} &\sim \alpha^* \cdot \beta \cdot \tau \cdot X_1 X_2 X_3 X_4 \\
 &\sim
  \frac{27}{64}\prod_{p>2} \left(1-\frac{1}{p} \right)^{-4} \left(1+\frac{4}{p} \right) X_1X_2X_3X_4\,.
\end{align*}
which matches the count of \Cref{main_thm}.

\nocite{*}
\printbibliography[
title={References}
]

\end{document}